\renewcommand{\Re}{{\mathbbm R}}
\newcommand{\one}{1\hspace{-0,9ex}1}
\newcommand{\A}{\mathbf{A}}
\renewcommand{\b}{\mathbf{b}}
\renewcommand{\c}{\mathbf{c}}
\newcommand{\f}{\mathfrak{f}}
\newcommand{\s}{\mathfrak{s}}
\newcommand{\fast}{{\{\hspace*{-0.75pt}\f\hspace*{-0.75pt}\}}}
\newcommand{\slow}{{\{\hspace*{-0.75pt}\s\hspace*{-0.75pt}\}}}
\newcommand{\fastfast}{{\{\hspace*{-0.75pt}\f,\f\hspace*{-0.75pt}\}}}
\newcommand{\fastslow}{{\{\hspace*{-0.75pt}\f,\s\hspace*{-0.75pt}\}}}
\newcommand{\slowfast}{{\{\hspace*{-0.75pt}\s,\f\hspace*{-0.75pt}\}}}
\newcommand{\slowslow}{{\{\hspace*{-0.75pt}\s,\s\hspace*{-0.75pt}\}}}
\newcommand{\ys}{y^\slow}
\newcommand{\yf}{y^\fast}
\newcommand{\Ys}{Y^\slow}
\newcommand{\Yf}{Y^\fast}
\newcommand{\funs}{f^\slow}
\newcommand{\funf}{f^\fast}
\newcommand{\etas}{\eta^\slow}
\newcommand{\etaf}{\eta^\fast}
\newcommand{\lambdas}{\lambda^\slow}
\newcommand{\lambdaf}{\lambda^\fast}
\newcommand{\ws}{w^\slow}
\newcommand{\wf}{w^\fast}
\newcommand{\zs}{z^\slow}
\newcommand{\zf}{z^\fast}
\DeclareMathAlphabet{\mathbase}{OT1}{pzc}{m}{n}
\newcommand{\Abase}{\mathbase{A}^\slow}
\newcommand{\abase}{\mathbase{a}^\slow}
\newcommand{\bbase}{\mathbase{b}^\slow}
\newcommand{\bhatbase}{\widehat{\mathbase{b}}^\slow}
\newcommand{\cbase}{\mathbase{c}^\slow}
\newcommand{\deltac}{\Delta\hspace*{-0.5pt}\mathbase{c}^\slow}
\newif\ifreport
\newcommand{\astretch}{1.5}
\newcommand{\astretch}{1.3}
\title{A Class of Multirate Infinitesimal GARK Methods\thanks{
\funding{This work was funded by awards NSF CCF--1613905, NSF ACI--1709727, AFOSR DDDAS 15RT1037, and by the Computational Science Laboratory at Virginia Tech}}}
\author{
Adrian Sandu\thanks{Computational Science Laboratory, Department of Computer Science, Virginia Tech, 2202 Kraft Drive, Blacksburg, VA 24060  (\email{sandu@cs.vt.edu}, \url{http://people.cs.vt.edu/\~asandu}).}
}
\begin{document}

	\cslyear{18}
	\cslreportnumber{5}
	\cslauthor{Adrian Sandu}
	\csltitle{A Class of Multirate Infinitesimal GARK Methods}
	\cslciteas{A. Sandu, A Class of Multirate Infinitesimal GARK Methods, SIAM Journal on Numerical Analysis Vol. 5, No. 7, pp. 2300-2327, doi:10.1137/18M1205492, 2019.}
	\csltitlepage

\maketitle
\begin{abstract}
Differential equations arising in many practical applications are characterized by multiple time scales. Multirate time integration seeks to solve them efficiently by discretizing each scale with a different, appropriate time step, while ensuring the overall accuracy and stability of the numerical solution. In a seminal paper Knoth and Wolke (APNUM, 1998) proposed a hybrid solution approach: discretize the slow component with an explicit Runge-Kutta method, and advance the fast component via a modified fast differential equation. The idea led to the development of multirate infinitesimal step (MIS) methods by Wensch et al. (BIT, 2009.) G\"{u}nther and Sandu (BIT, 2016) explained MIS schemes as a particular case of multirate General-structure Additive Runge-Kutta (MR-GARK) methods. The hybrid approach offers extreme flexibility in the choice of the numerical solution process for the fast component. 

This work constructs a family of multirate infinitesimal GARK schemes (MRI-GARK) that extends the hybrid dynamics approach in multiple ways. Order conditions theory and stability analyses are developed, and practical explicit and implicit methods of up to order four are constructed. Numerical results confirm the theoretical findings. We expect the new MRI-GARK family to be most useful for systems of equations with widely disparate time scales, where the fast process is dispersive, and where the influence of the fast component on the slow dynamics is weak.
\end{abstract}
\begin{keywords}
  General-structure additive Runge-Kutta methods, multirate methods
\end{keywords}
\begin{AMS}
  65L03
\end{AMS}

\section{Introduction}

Many dynamical systems of practical interest consist of multiple components that evolve at different characteristic time scales. As a representative model we consider a two-way additively partitioned ordinary differential equation (ODE) driven by a slow process $\slow$ and a fast process $\fast$, acting simultaneously:
\begin{equation} 
	\label{eqn:multirate-additive-ode}
	 y'= f(t,y) = \funs (t,y) +  \funf (t,y), \qquad y(t_0)=y_0.
\end{equation}
Multirate methods are efficient numerical solution strategies for \eqref{eqn:multirate-additive-ode} that use small step sizes to discretize the fast components, and large step sizes to discretize the slow components. The approach goes back to the pioneering work on multirate Runge-Kutta methods of Rice \cite{Rice_1960_splitRK} and Andrus \cite{Andrus_1979_MR,Andrus_1993_MR-stability}.

The main approach for the construction of multirate methods is to employ traditional integrators with different time steps for different components, and to couple the fast and slow components such as to ensure the overall stability and accuracy of the discretization. Following this philosophy multirate schemes have been developed in the context of Runge-Kutta methods \cite{Sandu_2007_MR-monotonic-RK2,Guenther_2001_MR-PRK,Guenther_1994_partition-circuits,Kvaerno_2000_stability-MRK,Kvaerno_1999_MR-RK}, 
linear multistep methods  \cite{Gear_1984_MR-LMM,Kato_1999,Sandu_2009_MR-monotonic-LMM},
Rosenbrock-W methods  \cite{Guenther_1997_ROW},
extrapolation methods \cite{Sandu_2010_MR-extrapolation_ECMI,Sandu_2013_MR-extrapolation,Engstler_1997_MR-extrapolation,Sandu_2009_MR-hyperbolic_ICNAAM},
Galerkin discretizations \cite{Logg_2003_MAG1}, and combined multiscale  methodologies \cite{Engquist_2005}.

Multirate time integration has been adopted by many applications including the simulation of electronic circuits \cite{Chen_2010_MRcircuits,Guenther_1994_partition-circuits}, subsurface flows \cite{Wheeler_2016_geo}, ocean modeling \cite{Dawson_2014_MRDG,Remacle_2012_MRDG}, energy conversion \cite{Penalba_2019_energy}, multibody dynamics \cite{Arnold_2007_multibody}, electromagnetic fields \cite{Goedel_2009_MRDG}, and fluids in complex geometries \cite{Klockner_2018_MRAB}, to name just a few.

In a seminal paper Knoth and Wolke \cite{Knoth_1998_MR-IMEX} proposed a hybrid approach to solve \eqref{eqn:multirate-additive-ode}. First, the slow component is discretized with an $s$-stage explicit Runge-Kutta method $(A,b,c)$ with increasing abscissae, $c_{i-1} < c_i$. Next, the solution is advanced between the consecutive stages of this method by solving a modified fast ODE system. For autonomous systems \eqref{eqn:multirate-additive-ode} the solution process reads:
\begin{equation}
\label{eqn:MIS-one-step}
\renewcommand{\arraystretch}{\astretch}
\begin{array}{l}
Y^{\{\s\}}_1 \coloneqq y_n,  \\
\begin{cases}
v_i' = f^{\{\f\}} \left( v_i \right) + \sum_{j=1}^{i-1} \frac{a_{i,j} - a_{i-1,j}}{c_{i} - c_{i-1}} f^{\{\s\}} \bigl( Y^{\{\s\}}_j \bigr), \quad \tau \in [0, (c_{i} - c_{i-1})\, H], \\
\quad  \textnormal{with } v_i(0) = Y^{\{\s\}}_{i-1},
\quad Y^{\{\s\}}_i \coloneqq v_i\bigl( (c_{i} - c_{i-1})\,H \bigr), \quad i=2,\dots,s;
\end{cases}\\
v_{s+1}' = f^{\{\f\}} \left( v \right) + \sum_{j=1}^{s} \frac{b_{j} - a_{s,j}}{1 - c_{s}} f^{\{\s\}} \bigl( Y^{\{\s\}}_j \bigr), \quad \tau \in  [0, (1 - c_{s}) H], \\
\quad \textnormal{with } v_{s+1}(0) = Y^{\{\s\}}_{s}; \quad y_{n+1} \coloneqq v_{s+1}\bigl( (1 - c_{s})\,H \bigr).
\end{array}
\renewcommand{\arraystretch}{1.0}
\end{equation}
The modified fast ODE \eqref{eqn:MIS-one-step} is composed of the original fast component from \eqref{eqn:multirate-additive-ode} plus a piece-wise constant ``slow tendency''  term. Wensch, Knoth, and Galant \cite{Wensch_2009_MIS} generalized  \eqref{eqn:MIS-one-step} by adding linear combinations of stages ($Y^{\{\s\}}_{j}-y_n$, $1 \le j < i$) to both the initial condition $v_i(0)$ and to the slow tendency term. Since the fast ODE is solved exactly (or, equivalently, is solved numerically with an infinitesimally small step size) they aptly named the resulting schemes   ``multirate infinitesimal step'' methods. The methods were interpreted as exponential integrators, and order conditions up to order three were derived. Schlegel, Knoth, Wolke, and Arnold casted MIS schemes as traditional partitioned Runge-Kutta methods \cite{Schlegel_2012_MR-atmospheric}, and furthered the theoretical and practical understanding of this family   \cite{Knoth_2014_MR-Euler,Schlegel_2009_RFSMR,Schlegel_2010_MR-imex,Schlegel_2011_MR-implementation}.

The hybrid dynamical approach has proved fruitful not only in the construction of new numerical schemes, but also in the analysis of existing ones. For example, a hybrid approach has been used to abstract away the details of subsystem integration and study only the coupling aspects of waveform relaxation methods \cite{Guenther_2001_dynamic-iterations,Bartel_2013_dynamic-iterations}. The hybrid dynamical approach is also related to the engineering concept of co-simulation \cite{Gomes_2018_cosimulation,Schops_2010_cosimulation}.

G\"{u}nther and Sandu \cite{Sandu_2016_MR-GARK} explained MIS schemes in the framework of General-structure Additive Runge-Kutta (GARK) methods \cite{Sandu_2015_GARK}. A multirate GARK (MR-GARK) method \cite{Sandu_2016_MR-GARK} integrates the slow component with a Runge--Kutta method $(A^\slowslow,b^{\{\s\}})$ and a large step size $H$, and the fast component with another Runge--Kutta method $\bigl(A^\fastfast,b^{\{\f\}}\bigr)$ and a small step size $h = H/M$. Here $M \ge 1$ represents the (integer) number of fast steps that are executed for each of the slow steps. One step of the method computes $s^{\{\s\}}$ slow stages, denoted by $Y_i^{\{\s\}}$, and $M s^{\{\f\}}$ fast stages, denoted by $Y_i^{\{\f,\lambda\}}$:
\begin{subequations}
	\label{eqn:GARK-MR}
	\begin{align}
	\nonumber
		f_j^{\{\f,\lambda\}} &\coloneqq \funf\bigl(Y_j^{\{\f,\lambda\}}\bigr), \quad f_j^{\{\s\}} \coloneqq \funs\bigl(Y_j^{\{\s\}}\bigr); \\
				\begin{split}
	        \label{eqn:GARK-MR-slow-stage}
			Y_i^{\{\s\}} &= y_n + H \, \sum_{j=1}^{s^{\{\s\}}} a_{i,j}^\slowslow f_j^{\{\s\}} 
			+ h \, \sum_{\lambda=1}^M \,\sum_{j=1}^{s^{\{\f\}}} a_{i,j}^{\{\s,\f,\lambda\}} f_j^{\{\f,\lambda\}},
			\quad {1 \le i \le s^{\{\s\}}},
		\end{split} \\
		\begin{split}
			\label{eqn:GARK-MR-fast-stage}
			Y_i^{\{\f,\lambda\}} & = \widetilde{y}_{n+\frac{\lambda-1}{M}} + H \, \sum_{j=1}^{s^{\{\s\}}} a_{i,j}^{\{\f,\s,\lambda\}} f_j^{\{\s\}}  + h \, \sum_{j=1}^{s^{\{\f\}}} a_{i,j}^\fastfast f_j^{\{\f,\lambda\}},\quad { 1 \le i \le s^{\{\f\}}},
		\end{split} \\
	        \label{eqn:GARK-MR-final-solution}
	\widetilde{y}_{n+\frac{\lambda}{M}} &= \widetilde{y}_{n+\frac{\lambda-1}{M}} + h \sum_{i=1}^{s^{\{\f\}}} b_{i}^{\{\f\}} f_i^{\{\f,\lambda\}}, \quad {\lambda=1,\ldots,M}, \\
	y_{n+1} &= \widetilde{y}_{n+\frac{M}{M}} + H \, \sum_{i=1}^{s^{\{\s\}}} b_{i}^{\{\s\}} f_i^{\{\s\}}.
	\end{align}
\end{subequations}
%
The Butcher tableau for the MR-GARK method \eqref{eqn:GARK-MR} is \cite{Sandu_2016_MR-GARK}:
%
\begin{equation}
	\label{eqn:MR-GARK-butcher}
	\bgroup
	\def\arraystretch{\astretch}
	\begin{array}{c|c}
	\A^\fastfast & \A^\fastslow  \\ \hline
	\A^\slowfast & \A^\slowslow \\ \hline 
	\b^{\{\f\}}\,^T & \b^{\{\s\}}\,^T
	\end{array} ~~ \coloneqq~~ 
	\begin{array}{ccc|c}  
	\scriptstyle M^{-1} A^\fastfast      &  \scriptstyle        \cdots &\scriptstyle  0 &\scriptstyle  A^{\{\f,\s,1\}}  \\
	\scriptstyle \vdots                                  &\scriptstyle  \ddots &   &\scriptstyle  \vdots  \\
	\scriptstyle M^{-1} \one^{\{\f\}} \, b^{\{\f\}}\,^T  &\cdots &\scriptstyle  M^{-1} A^\fastfast &\scriptstyle A^{\{\f,\s,M\}} \\
	\hline 
	\scriptstyle M^{-1} A^{\{\s,\f,1\}}  &\cdots &\scriptstyle  M^{-1} A^{\{\s,\f,M\}} &\scriptstyle  A^\slowslow   \\   \hline 
	\scriptstyle M^{-1} b^{\{\f\}}\,^T  &\cdots &\scriptstyle  M^{-1} b^{\{\f\}}\,^T &\scriptstyle  b^{\{\s\}}\,^T
	\end{array}~.
	\egroup
\end{equation}

This work constructs a family of multirate infinitesimal GARK schemes (named MRI-GARK) that extend the hybrid dynamics approach of \cite{Knoth_1998_MR-IMEX,Wensch_2009_MIS} in several ways.  Time dependent ``slow tendency''  terms are used to construct the modified fast system. The MRI-GARK general order conditions theory is developed by leveraging the GARK accuracy theory \cite{Sandu_2015_GARK}. This allows the construction of the first fourth order multirate infinitesimal methods, as well as the  first  multirate infinitesimal schemes that are implicit in the slow component. Matrix stability analyses are carried out using a new simplified two-dimensional linear test problem. 

The paper is organized as follows. The new family of MRI-GARK schemes is defined in Section \ref{sec:MRI-GARK}. The order condition theory is developed in Section \ref{sec:order}, and the stability analysis in Section \ref{sec:stability}. Practical explicit methods of order up to four are presented in Section \ref{sec:explicit-methods}, and decoupled implicit methods in Section \ref{sec:implicit-methods}. Numerical results are reported in Section \ref{sec:numerics}, and conclusions are drawn in Section \ref{sec:conclusions}.


\section{Multirate Infinitesimal GARK Methods}
\label{sec:MRI-GARK}
This work constructs multirate infinitesimal schemes defined as follows.
\begin{definition}[Infinitesimal methods]
A multirate infinitesimal method for \eqref{eqn:multirate-additive-ode} is a solution process with the following ``infinitesimally exact'' property: if $\funs \equiv 0$ then the scheme reduces to an exact integration of the fast system $y'=\funf (t,y)$.  
\end{definition}
Since we do not require the limit of step sizes to approach zero for exact integration of the fast subsystem, it follows that infinitesimal methods are hybrid solution procedures that may discretize the slow dynamics, but advance the fast dynamics in continuous time. We note that while the original MIS method \cite{Knoth_1998_MR-IMEX} is ``infinitesimally exact'', its extensions \cite{Wensch_2009_MIS} do not automatically enjoy this property.

Construction of the methods starts with a ``slow'' Runge-Kutta base method with non-increasing abscissae:
\begin{subequations}
\begin{equation}
\label{eqn:slow-base-scheme}
\renewcommand{\arraystretch}{1.3}
\begin{split}
&\mathsf{A}^\slow  \equiv
\begin{array}{c | c}
\cbase  & \Abase \\
\hline
1 & \bbase\,^T \\
\hline
1 & \bhatbase\,^T 
\end{array}~,
\qquad 
\begin{array}{l}
\cbase_1 \le \cbase_2 \le \cdots \le \cbase_{s^\slow} \le 1, \end{array}
\\
& \abase_{s^\slow+1,j} \coloneqq \bbase_{j}, \quad \cbase_{s^\slow+1} \coloneqq 1, \quad
\abase_{s^\slow+2,j} \coloneqq \bhatbase_{j}, \quad \cbase_{s^\slow+2} \coloneqq 1,
\end{split}
\end{equation}
and define the increments between consecutive stages of the base method:
\begin{equation}
\label{eqn:abscissa-increments}
\deltac_i \coloneqq
\begin{cases}
\cbase_{i+1} - \cbase_{i}, & i=1, \dots, s^\slow-1, \\
1 - \cbase_{s^\slow}, & i=s^\slow, \\
0, &  i=s^\slow+1.
\end{cases}
\end{equation}
\end{subequations}
\begin{definition}[MRI-GARK methods for additively partitioned systems]
A  Multirate Infinitesimal GARK (MRI-GARK) scheme applied to the additively partitioned system \eqref{eqn:multirate-additive-ode} advances the solution from $t_n$ to $t_{n+1} = t_{n}+H$ as follows:
\begin{subequations}
\label{eqn:MIS-additive}
\begin{eqnarray}
\label{eqn:MIS-additive-first-stage}
\qquad&& \Ys_{1} = y_{n}, \\
\label{eqn:MIS-additive-internal-ode}
&& \begin{cases}
v(0) = \Ys_{i}, \\
T_i = t_n + \cbase_i\, H, \\
v' = \deltac_i  \, \funf \left(\,T_i + \deltac_i \theta,\, v\, \right) + \sum_{j=1}^{i+1} \gamma_{i,j}\left({\scriptstyle \frac{\theta}{H}}\right) \, \funs\bigl(\,T_j,\, Y^\slow_j \, \bigr) \\
\qquad  \textnormal{for  } \theta \in  [0,  H],  \\
\Ys_{i+1} = v(H), \qquad i=1,\dots,s^\slow,
\end{cases}
\\
\label{eqn:MIS-additive-solution}
&& y_{n+1} = \Ys_{s^\slow+1}\,.
\end{eqnarray}
\end{subequations}
Linear combinations of the slow function values are added as forcing to the modified fast ODE system \eqref{eqn:MIS-additive-internal-ode}; in order to use only already computed slow stages one needs $\gamma_{i,j}(\tau) = 0$ for $j>i$.
\end{definition}

\begin{definition}[Slow tendency coefficients]
It is convenient to define the time-dependent combination coefficients as polynomials in time, and to consider their integrals:
\begin{equation}
\label{eqn:gamma-as-power-series}
\gamma_{i,j}( \tau ) \coloneqq \sum_{k \ge 0} \gamma_{i,j}^k\,\tau^k,
\quad \widetilde{\gamma}_{i,j}\left( x \right) \coloneqq \int_{0}^{x} \gamma_{i,j}\left(\tau\right) d\tau = \sum_{k \ge 0} \gamma_{i,j}^k\,\frac{x^{k+1}}{k+1}\,, \quad
\overline{\gamma}_{i,j} \coloneqq \widetilde{\gamma}_{i,j}\left( 1 \right).
\end{equation}
%
%
\end{definition}

\begin{remark}[Equal abscissae]
When two consecutive abscissae are equal,  $\cbase_{i+1} = \cbase_{i}$, the computation \eqref{eqn:MIS-additive-internal-ode} reduces to an explicit slow Runge-Kutta stage:
\begin{equation*}
\begin{split}
\Ys_{i+1} &= \Ys_{i} + H\,\sum_{j=1}^{i+1} \left( \int_{\tau=0}^1 \gamma_{i,j}\left(\tau\right)\right) \, \funs\bigl( Y^\slow_j \bigr) = \Ys_{i} + H\,\sum_{j=1}^{i+1} \overline{\gamma}_{i,j} \, \funs\bigl( Y^\slow_j \bigr),
\end{split}
\end{equation*}
whenever $\overline{\gamma}_{i,j} = 0$ for $j > i$.
We note that it is possible to choose $\overline{\gamma}_{i,i+1} \ne 0$, in which case \eqref{eqn:MIS-additive-internal-ode}  is equivalent to a singly diagonally implicit Runge-Kutta stage.
\end{remark}

\begin{remark}[Embedded method]
\label{rem:MRI-GARK-embedded}
The base slow scheme \eqref{eqn:slow-base-scheme} computes the main solution using the weights $b^\slow$, and the embedded solution using the weights $\widehat{b}^\slow$. The MRI-GARK scheme computes the main solution \eqref{eqn:MIS-additive-solution} via the last stage \eqref{eqn:MIS-additive-internal-ode}, that advances $\Ys_{s^\slow}$ to $y_{n+1}$ by solving a modified fast system with the slow weights $ \gamma_{s^\slow,j}$. An embedded solution is computed via an additional stage that advances $\Ys_{s^\slow}$ to $\widehat{y}_{n+1}$ using with the modified weights $\widehat{\gamma}_{s^\slow,j}$.
\ifreport
\begin{eqnarray*}
&& \begin{cases}
v(0) = \Ys_{s^\slow}, \\
 v' = \deltac_{s^\slow} \, \funf \left( v \right) + \sum_{j=1}^{s^\slow} \widehat{\gamma}_{s^\slow,j}\left(\frac{\theta}{H}\right) \, \funs\bigl( Y^\slow_j \bigr)
\quad  \textnormal{for}~~ \theta \in  [0,  H],  \\
\widehat{y}_{n+1} = v(H).
\end{cases}
\end{eqnarray*}
\fi
\end{remark}

\begin{definition}[MRI-GARK methods for component partitioned systems]

Consider the component partitioned system:
\begin{equation} 
\renewcommand{\arraystretch}{\astretch}
	\label{eqn:multirate-component-ode}
	 \begin{bmatrix} \yf \\ \ys  \end{bmatrix}'= 
	 \begin{bmatrix} \funf\bigl(t, \yf, \ys \bigr) \\  \funs\bigl(t, \yf, \ys \bigr) \end{bmatrix}, 
	 \quad 
	 \begin{bmatrix} \yf(t_0) \\ \ys(t_0)  \end{bmatrix}= \begin{bmatrix}  \yf_0 \\ \ys_0 \end{bmatrix}.
\end{equation}
An MRI-GARK scheme applied to \eqref{eqn:multirate-component-ode} computes the solution as follows:
\begin{subequations}
\label{eqn:MIS-component}
\begin{eqnarray}
\label{eqn:MIS-component-first-stage}
&\qquad\qquad&\Yf_{1} =  \yf_{n}, \quad \Ys_{1} = \ys_{n}, \\
\label{eqn:MIS-component-stages}
&&\begin{cases}
v^\fast_i(0) = \Yf_{i}, \\
v^\fast_i\,' = \deltac_i  \, \funf \bigg(\, T_i + \deltac_i \,\theta,\, v^\fast_i,\\
\qquad\qquad \Ys_{i}+H\,\sum_{j=1}^{i+1}  \widetilde{\gamma}_{i,j}\left({\scriptstyle \frac{\theta}{H}}\right) \, \funs\bigl(\,T_j,\, Y^\fast_j, \, Y^\slow_j\, \bigr) \bigg) 
~~ \textnormal{for  } \theta \in  [0,  H],  \\
\Yf_{i+1} = v^\fast_i(H), \\ 
\Ys_{i+1}  = \Ys_{i}+H\,\sum_{j=1}^{i+1}  \overline{\gamma}_{i,j}\, \funs\bigl(\,T_j,\, Y^\fast_j, \, Y^\slow_j\, \bigr), \qquad i=1,\dots,s^\slow,
\end{cases} \\
\label{eqn:MIS-component-solution}
&&\yf_{n+1} =\Yf_{s^\slow+1}, \quad  \ys_{n+1} = \Ys_{s^\slow+1}.
\end{eqnarray}
\end{subequations}
For $\deltac_i \ne 0$ we require that $\gamma_{i,i+1}(\tau)=0$ such that only previously computed slow stage values $\Ys_{j}$ are used in the formulation of the modified fast ODE. The additive \eqref{eqn:MIS-additive} and component-based \eqref{eqn:MIS-component} formulations are equivalent to each other.
\end{definition}

\ifreport
\begin{remark}[Equivalency of the additive and component based formulations]
Writing the system \eqref{eqn:multirate-component-ode} in additively partitioned form \eqref{eqn:multirate-additive-ode}:
\begin{equation} 
\renewcommand{\arraystretch}{\astretch}
	\label{eqn:multirate-component-ode-as-additive}
	 \begin{bmatrix} \yf \\ \ys  \end{bmatrix}'= 
	 \begin{bmatrix} 0 \\  \funs\bigl(t, \yf, \ys \bigr) \end{bmatrix}
	 +
	 \begin{bmatrix} \funf\bigl(t, \yf, \ys \bigr) \\  0 \end{bmatrix},
\end{equation}
and applying the solution method \eqref{eqn:MIS-one-step} leads to:
\begin{subequations}
\label{eqn:MIS-component-full}
\begin{eqnarray}
\label{eqn:MIS-component-first-stage-full}
&& \begin{bmatrix} \Yf_{1} \\ \Ys_{1}  \end{bmatrix} = \begin{bmatrix} \yf_{n} \\ \ys_{n} \end{bmatrix}, \\
\label{eqn:MIS-component-internal-ode}
&& \begin{cases}
v^\fast_i(0) = \Yf_{i}, \\
v^\slow_i(0) = \Ys_{i}, \\ 
 T_{i+1} = t_n + \cbase_{i+1}\,H , \\ 
 v^\fast_i\,' = \deltac_i  \, \funf \left(\, T_i + \deltac_i \,\theta\,,\,v^\fast_i, \,v^\slow_i\, \right), \\
v^\slow_i\,' = \sum_{j=1}^{i+1} \gamma_{i,j}\left({\scriptstyle \frac{\theta}{H}}\right) \, \funs\bigl(\, T_j,\,Y^\fast_j,\,Y^\slow_j\, \bigr) 
\quad  \textnormal{for}~~ \theta \in  [0,  H],  \\
 \Yf_{i+1} = v^\fast_i(H), \\ 
 \Ys_{i+1}  = v^\slow_i(H), \qquad i=1,\dots,s^\slow,
\end{cases}
\\
\label{eqn:MIS-component-solution-full}
&&  \begin{bmatrix} \yf_{n+1} \\ \ys_{n+1} \end{bmatrix} = \begin{bmatrix} \Yf_{s^\slow+1} \\ \Ys_{s^\slow+1}  \end{bmatrix}\,.
\end{eqnarray}
\end{subequations}
We have that:
\begin{equation*}
\begin{split}
v^\slow_i(\theta) &= \Ys_{i}+\sum_{j=1}^{i+1} \left(\int_{\tau=0}^{\theta} \gamma_{i,j}\left({\scriptstyle \frac{\tau}{H}}\right) d\tau\right)\, \funs\bigl(\,T_j,\, Y^\fast_j, \, Y^\slow_j\, \bigr) \\
&= \Ys_{i}+H\,\sum_{j=1}^{i+1}  \widetilde{\gamma}_{i,j}\left({\scriptstyle \frac{\theta}{H}}\right) \, \funs\bigl(\,T_j,\, Y^\fast_j, \, Y^\slow_j\, \bigr).
\end{split}
\end{equation*}
This leads directly to \eqref{eqn:MIS-component}.
\end{remark}
\fi

\begin{example}[Second order MRI-GARK methods]
\label{example:second-order-methods}
Consider the explicit midpoint method and the implicit trapezoidal method, each paired with a first order embedded scheme:
\[
\renewcommand{\arraystretch}{1.3}
\mathsf{A}^\slow_\textsc{emidp}  ~=~
\begin{array}{c | cc}
\scriptstyle   0 &\scriptstyle   0 &\scriptstyle   0 \\
 \frac{1}{2} & \frac{1}{2} &\scriptstyle   0 \\
 \hline
\scriptstyle   1 & \scriptstyle 0 & \scriptstyle 1 \\
 \hline
\scriptstyle  1 &\scriptstyle   1 &\scriptstyle   0
\end{array},
\qquad
\mathsf{A}^\slow_\textsc{itrap} ~=~
\begin{array}{c | cc}
\scriptstyle 0 &\scriptstyle  0 &\scriptstyle  0 \\
\scriptstyle 1 &   \frac{1}{2} & \frac{1}{2} \\ 
 \hline
\scriptstyle 1 &   \frac{1}{2} & \frac{1}{2} \\
\hline
\scriptstyle 1 &\scriptstyle  0 &\scriptstyle  1
\end{array}.
\]
The explicit midpoint method  is the slow component \eqref{eqn:slow-base-scheme} of the following second order {\it explicit} MRI-GARK scheme \eqref{eqn:MIS-additive}:
%
%
\begin{equation}
\label{eqn:explicit-midpoint}
\begin{split}
v_1(0) &= y_{n}; \quad
v_1' = {\scriptstyle \frac{1}{2}} \, \funf \left( v_1 \right) + {\scriptstyle \frac{1}{2}}\, \funs\bigl( y_{n} \bigr),
~~ \theta \in  [0,  H];
\quad \Ys_{2} = v_1(H), \\
v_2(0) &= \Ys_{2}; \quad
v_2' = {\scriptstyle \frac{1}{2}} \, \funf \left( v_2 \right) -{\scriptstyle \frac{1}{2}} \, \funs\bigl( y_{n} \bigr)
+  \funs\bigl( Y^\slow_2 \bigr),~~ \theta \in  [0,  H],  \\
y_{n+1} & = v_2(H), \\
v_3(0) &= \Ys_{2}; \quad
v_3' = {\scriptstyle \frac{1}{2}} \, \funf \left( v_3 \right) + {\scriptstyle \frac{1}{2}} \, \funs\bigl( y_{n} \bigr),
~~\theta \in  [0,  H];\quad
\widehat{y}_{n+1} =  v_3(H).
\end{split}
\end{equation}
The implicit trapezoidal method is the slow component \eqref{eqn:slow-base-scheme} of the following second order {\it implicit} MRI-GARK scheme \eqref{eqn:MIS-additive}:
%
\begin{equation}
\label{eqn:implicit-trapezoidal}
\begin{split}
v(0) &=y_{n}; \quad
v' = \funf \left( v \right) + \funs\bigl( y_{n} \bigr),
~~ \theta \in  [0,  H];
\quad \Ys_{2} = v(H), \\
y_{n+1} &= \Ys_{2} - {\scriptstyle \frac{1}{2}} \, \funs \left( y_{n} \right) + {\scriptstyle \frac{1}{2}} \, \funs\bigl( y_{n+1} \bigr); \\
\widehat{y}_{n+1} &= \Ys_{2} -  \funs \left( y_{n} \right) + \funs\bigl( y_{n+1} \bigr).
\end{split}
\end{equation}

\end{example}


\section{Order conditions}
\label{sec:order}
For accuracy analysis we replace the continuous fast process by a discrete Runge Kutta method $\bigl(A^\fastfast,b^\fast,c^\fast\bigr)$ of arbitrary accuracy and having an arbitrary number of stages $s^\fast$. This approach casts the MRI-GARK scheme \eqref{eqn:MIS-additive} into the multirate GARK framework \eqref{eqn:GARK-MR}, where each sub-step is carried out between one slow stage and the next.  We denote the fast sub-steps by $\lambda=1,\dots,s^\slow$, where each advances the fast system from $t_n + \cbase_{\lambda}\,H$ to $t_n + \cbase_{\lambda+1}\,H$.

\begin{remark}[Matrix notation]
It is convenient to gather the gamma coefficients in the matrices $\boldsymbol{\Gamma}^k \coloneqq \bigl[ \gamma_{i,j}^k \bigr]_{i,j } \in \Re^{s^\slow \times s^\slow}$, and express  \eqref{eqn:gamma-as-power-series} as follows:
\[
\boldsymbol{\Gamma}(\tau) = \sum_{k \ge 0} \boldsymbol{\Gamma}^k\,\tau^k, \quad
\widetilde{\boldsymbol{\Gamma}}\left( x \right) = \sum_{k \ge 0} \frac{x^{k+1}}{k+1} \, \boldsymbol{\Gamma}^k, \quad
\overline{\boldsymbol{\Gamma}} = \sum_{k \ge 0} \frac{1}{k+1}\,\boldsymbol{\Gamma}^k.
\]
The structure of the coefficient matrices is lower triangular, in the sense that:
\begin{equation}
\label{eqn:gamma-structure}
\gamma_{i,j}^k = 0 ~ \textnormal{for} ~ j \ge i+2 \quad
\textnormal{and} \quad \gamma_{i,i+1}^k\cdot\Delta  \cbase_i = 0,
\quad i=1,\dots,s^\slow-1, ~ k \ge 0.
\end{equation}
\ifreport 
Consequently:
\begin{itemize}
\item For explicit slow base methods \eqref{eqn:slow-base-scheme} the coefficient matrices are lower triangular, $\gamma_{i,j}^k = 0$ for $j \ge i+1$;
\item For diagonally implicit slow base methods \eqref{eqn:slow-base-scheme} one allows $\gamma_{i,i+1}^k \ne 0$ when $\deltac_i = 0$.
\end{itemize}
\fi 
\end{remark}

\begin{lemma}[The MRI-GARK method \eqref{eqn:MIS-additive} as a particular instance of a GARK method]
Consider the MRI-GARK scheme~\eqref{eqn:MIS-additive} where the continuous fast process is replaced by a discrete Runge Kutta method $\bigl(A^\fastfast,b^\fast,c^\fast\bigr)$ of arbitrary accuracy and having an arbitrary number of stages $s^\fast$. The resulting computational process is a multirate GARK method \eqref{eqn:GARK-MR} with the following Butcher tableau \eqref{eqn:MR-GARK-butcher} components.
\paragraph{Fast component}
\begin{subequations}
\label{eqn:MRI-butcher-fast}
\begin{eqnarray}
\label{eqn:MRI-butcher-Aff}
\qquad 
\mathbf{A}^\fastfast  &=& 
\begin{bmatrix}
\deltac_1\, A^\fastfast      &     \scriptstyle       0                   & \cdots &\scriptstyle  0 \\
\deltac_1\,  \one^\fast  b^\fast\,^T &  \deltac_2\, A^\fastfast        & \cdots &\scriptstyle  0 \\
\vdots                     &         \vdots                    & \ddots &    \vdots \\
\deltac_1\,  \one^\fast  b^\fast\,^T & \deltac_2\, \one^\fast  b^\fast\,^T   & \ldots &  \deltac_{s^{\{s\}}}\, A^\fastfast 
\end{bmatrix} \in \Re^{s \times s}, \\[6pt]
%
%
%
%
\label{eqn:MRI-butcher-bf}
\mathbf{b}^\fast &=& \deltac \otimes b^\fast 
\ifreport 
=
\begin{bmatrix}
\deltac_1 \,  b^\fast  \\
\vdots \\
\deltac_i\, b^\fast \\
\vdots \\
\deltac_{s^{\{s\}}} \, b^\fast
\end{bmatrix} 
\fi 
\in \Re^{s}, \\
%
%
\label{eqn:MRI-butcher-cf}
\mathbf{c}^\fastfast &=& \mathbf{A}^\fastfast \, \one_{s \times 1} 
\ifreport 
=
\begin{bmatrix}
\deltac_1\,c^\fast  \\
\vdots \\
c_{i}^\slow  \, \one^\fast +  \deltac_i\,c^\fast \\
\vdots \\
c_{s^\slow}^\slow  \, \one^\fast +  \deltac_{s^{\{s\}}}\,c^\fast
\end{bmatrix}
\fi 
= \cbase  \otimes \one^\fast + \deltac \otimes c^\fast \in \Re^{s}. 
%
\end{eqnarray}
\end{subequations}
Here $s = s^\slow\cdot s^\fast$ is the total number of stages of the method and $\otimes$ is the Kronecker product.
\ifreport 
From \eqref{eqn:MRI-butcher-bf} and \eqref{eqn:MRI-butcher-cf} we see immediately that:
\begin{equation*}
\mathbf{b}^\fast\,^T \, \mathbf{c}^\fastfast
= \sum_i    \deltac_i\, b^\fast\,^T\,\left(\cbase_{i}  \, \one^\fast + \deltac_i\,c^\fast \right) = \frac{1}{2}.
\end{equation*}
\fi 
\paragraph{Slow component}

\begin{subequations}
\label{eqn:MRI-butcher-slow}
\begin{eqnarray}
\label{eqn:MRI-butcher-Ass}
\qquad\qquad \A^\slowslow &\equiv&  \Abase = \mathbf{E}\,\overline{\boldsymbol{\Gamma}} \quad \textnormal{where} \quad
\mathbf{E} \in \Re^{s^\slow \times s^\slow}, \quad
\mathbf{E}_{i,j} = \begin{cases}
1 & i \ge j+1, \\
0 & \textnormal{otherwise},
\end{cases} \\
\label{eqn:MRI-butcher-bs}
\b^\slow\,^T &\equiv&\bbase\,^T = \one^\slow\,^T \, \overline{\boldsymbol{\Gamma}}, \\
\label{eqn:MRI-butcher-css}
\c^\slowslow &=& \A^\slowslow\, \one^\slow \equiv \cbase =  \mathbf{E}\,\overline{\boldsymbol{\Gamma}}\, \one^\slow.
\end{eqnarray}
\end{subequations}
\paragraph{Slow-fast coupling}

\begin{subequations}
\label{eqn:MRI-butcher-slow-fast}
\begin{eqnarray}
\label{eqn:MRI-butcher-Asf}
\qquad \mathbf{A}^\slowfast &=& \Bigl[\mathbf{A}^{\{\s,\f,1\}} ~ \cdots  \mathbf{A}^{\{\s,\f,s^\slow\}} \Bigr]  \in \Re^{s^\slow \times s}, \\
\nonumber
\mathbf{A}^{\{\s,\f,i\}} &=& 
\begin{cases}
\deltac_{i}\, \mathbf{g}_{i+1}\, b^\fast\,^T, & i=1,\dots, s^\slow-1, \\
\boldsymbol{0}_{s^\slow \times s^\fast}, & i=s^\slow,
\end{cases} \\
\label{eqn:MRI-butcher-csf}
\c^\slowfast &=& \mathbf{A}^\slowfast\,\one_{s \times 1} = \cbase,
\end{eqnarray}
\end{subequations}
where for each $i=1,\dots,s^\slow$ we define:
\[
\mathbf{e}_i^T \coloneqq \begin{blockarray}{ccccc}
\scriptstyle 1 &          &\scriptstyle   i  &   &\scriptstyle  s^\slow \\
\begin{block}{[ccccc]}
0 & \cdots &  1 &   \cdots  &  0 \\ \end{block}\end{blockarray}, \quad
\mathbf{g}_i^T \coloneqq \begin{blockarray}{cccccc}
\scriptstyle 1 &          &\scriptstyle  i-1 &\scriptstyle  i  &   &\scriptstyle  s^\slow \\
\begin{block}{[cccccc]}
0 & \cdots &  0 & 1 &   \cdots  &  1 \\ \end{block}\end{blockarray}.
\]

\paragraph{Fast-slow coupling}
\begin{subequations}
\label{eqn:MRI-butcher-fast-slow}
\begin{eqnarray}
\label{eqn:MRI-butcher-Afs}
\mathbf{A}^\fastslow & = &  \Abase \otimes \one^\fast +   \sum_{k \ge 0} \boldsymbol{\Gamma}^k \otimes \bigl( A^\fastfast \, c^\fast\,^{\times k} \bigr),
 \\
\label{eqn:MRI-butcher-cfs}
\mathbf{c}^\fastslow  &=& \mathbf{A}^\fastslow\,\one^\slow =
\cbase  \otimes \one^\fast +   \sum_{k \ge 0}  \bigl( \boldsymbol{\Gamma}^k\,\one^\slow \bigr) \otimes \bigl(A^\fastfast \, c^\fast\,^{\times k}\bigr).
\end{eqnarray}
\end{subequations}
\ifreport 
This is detailed as follows:
\begin{eqnarray*}
\mathbf{A}^\fastslow & = & 
\begin{bmatrix}
\mathbf{A}^{\{\f,\s,1\}} \\
\vdots \\
\mathbf{A}^{\{\f,\s,s^\slow\}}
\end{bmatrix}, \qquad
\mathbf{c}^\fastslow  = 
\begin{bmatrix}
\mathbf{c}^{\{\f,\s,1\}} \\
\vdots \\
\mathbf{c}^{\{\f,\s,s^\slow\}}
\end{bmatrix}, 
\\
\A^{\{\f,\s,i\}} &=& \one^\fast\, \Abase_{i,:} +   (A^\fastfast \, c^\fast\,^{\times k})\,\sum_{k \ge 0} \boldsymbol{\Gamma}_{i,:}^k,
\quad i=1,\dots,s^\slow, \\
\c^{\{\f,\s,i\}} &=&\cbase_{i}\,\one^\fast \, 
+ \sum_{k \ge 0}  \left( \boldsymbol{\Gamma}^k\,\one^\slow \right)_{i} \,(A^\fastfast \, c^\fast\,^{\times k}).
\end{eqnarray*}
Here $\Abase_{i,:},\boldsymbol{\Gamma}_{i,:}^k$ represent the $i$-th rows of the matrices, and $\left( \boldsymbol{\Gamma}^k\,\one^\slow \right)_{i}$
the $i$-th element of the vector.
\fi 
\end{lemma}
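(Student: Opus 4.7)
The plan is to apply the prescribed discrete Runge--Kutta method $(A^\fastfast, b^\fast, c^\fast)$ to the modified fast ODE \eqref{eqn:MIS-additive-internal-ode} at each sub-step $i=1,\dots,s^\slow$, then rearrange the resulting stage equations into the MR-GARK form \eqref{eqn:GARK-MR} so that each block of the Butcher tableau \eqref{eqn:MR-GARK-butcher} can be read off directly.

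First, using step size $H$ at sub-step $i$, the discrete fast method produces stages
\begin{equation*}
Y_\ell^{\{\f,i\}} = \Ys_i + H \sum_m a^\fastfast_{\ell,m}\Bigl[\deltac_i\, \funf\bigl(T_i+\deltac_i c^\fast_m H, Y_m^{\{\f,i\}}\bigr) + \sum_j \gamma_{i,j}(c^\fast_m)\, f_j^\slow\Bigr],
\end{equation*}
where $f_j^\slow \coloneqq \funs(T_j, \Ys_j)$. The power series expansion \eqref{eqn:gamma-as-power-series} gives $\sum_m a^\fastfast_{\ell,m}\gamma_{i,j}(c^\fast_m) = \sum_k \gamma^k_{i,j}\bigl(A^\fastfast c^\fast{}^{\times k}\bigr)_\ell$, and the update $\Ys_{i+1}=v(H)$ reduces to $\Ys_{i+1} = \Ys_i + \deltac_i H \sum_m b^\fast_m f_m^{\{\f,i\}} + H\sum_j \overline{\gamma}_{i,j}\, f_j^\slow$ since $\int_0^1\gamma_{i,j}(\tau)\,d\tau = \overline{\gamma}_{i,j}$. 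Telescoping this recurrence from $\Ys_1 = y_n$ produces
\begin{equation*}
\Ys_i = y_n + H\sum_{k<i}\deltac_k \sum_m b^\fast_m f_m^{\{\f,k\}} + H\sum_j\Bigl(\sum_{k<i}\overline{\gamma}_{k,j}\Bigr) f_j^\slow,
\end{equation*}
and $\sum_{k<i}\overline{\gamma}_{k,j}=(\mathbf{E}\,\overline{\boldsymbol{\Gamma}})_{i,j}$ with $\mathbf{E}$ the strictly lower triangular matrix of ones defined in \eqref{eqn:MRI-butcher-Ass}.

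Substituting the telescoped $\Ys_i$ into the stage formula above and into the recurrence reveals all tableau blocks simultaneously. Comparison with \eqref{eqn:GARK-MR-fast-stage} yields the block-lower-triangular $\mathbf{A}^\fastfast$ of \eqref{eqn:MRI-butcher-Aff}, with diagonal blocks $\deltac_i A^\fastfast$ from the current sub-step and sub-diagonal blocks $\deltac_k \one^\fast b^\fast\,^T$ from the completed fast sub-steps $k<i$, together with the fast-slow coupling \eqref{eqn:MRI-butcher-Afs}, whose $\one^\fast\otimes\Abase$ term encodes the slow contribution inherited via $\Ys_i$ and whose $\sum_k(A^\fastfast c^\fast{}^{\times k})\otimes\boldsymbol{\Gamma}^k$ term encodes the in-stage slow forcing. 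Inspecting the telescoped slow recurrence identifies $\A^\slowslow = \mathbf{E}\,\overline{\boldsymbol{\Gamma}}\equiv\Abase$, $\b^\slow\,^T = \one^\slow\,^T\overline{\boldsymbol{\Gamma}}$ (from the final update $y_{n+1}=\Ys_{s^\slow+1}$), and the slow-fast block $\mathbf{A}^{\{\s,\f,k\}}=\deltac_k\,\mathbf{g}_{k+1}\,b^\fast\,^T$ since fast sub-step $k$ feeds only into slow stages $i>k$, giving $\mathbf{A}^{\{\s,\f,s^\slow\}}=\mathbf{0}$. The remaining identities $\mathbf{b}^\fast = \deltac\otimes b^\fast$ and $\mathbf{c}^\fastfast = \cbase\otimes\one^\fast + \deltac\otimes c^\fast$ then follow from the global update weights and from row sums of the already-identified blocks.

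The main obstacle is the off-by-one bookkeeping in the telescoping: one must track that fast sub-step $k$ first contributes to slow stage $k+1$ (producing $\mathbf{g}_{k+1}$ in \eqref{eqn:MRI-butcher-Asf}) and that the cumulative slow contribution to $\Ys_i$ is a \emph{strict} lower-triangular sum over $k<i$ (producing the $\mathbf{E}$ factor in \eqref{eqn:MRI-butcher-Ass}), while simultaneously keeping the current sub-step's in-stage slow forcing separate from the closed-out contributions of earlier sub-steps.
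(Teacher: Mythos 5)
Your proposal is correct and follows essentially the same route as the paper: discretize each stage ODE with the fast Runge--Kutta method, telescope the slow stage recurrence from $\Ys_1=y_n$, and read off the GARK tableau blocks by matching against \eqref{eqn:GARK-MR-slow-stage} and \eqref{eqn:GARK-MR-fast-stage}. The only step you state loosely --- replacing the discrete weighted sum $\sum_m b^\fast_m\gamma_{i,j}(c^\fast_m)$ by the integral $\overline{\gamma}_{i,j}$ --- is exactly the point where the paper invokes the arbitrary accuracy of the fast scheme via $b^\fast\,^T c^{\{\f\}\times k}=1/(k+1)$, so the argument is the same.
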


\begin{proof}
Replacing the $i$-th continuous stage \eqref{eqn:MIS-additive-internal-ode} by the equivalent $i$-th fast discrete sub-step to advance from $t_n + \cbase_{i}\,H$ to $t_n + \cbase_{i+1}\,H$ leads to:
\begin{subequations}
\label{eqn:MRI_discrete}
\begin{equation}
\label{eqn:MRI_discrete_fast_stage}
\begin{split}
V_{k}^{\{\f,i\}} 
%
%
& =  Y_{i}^\slow + H\, \deltac_{i}\, \sum_{\ell=1}^{s^\fast} a^\fastfast_{k,\ell} \, \funf \bigl( V_{\ell}^{\{\f,i\}} \bigr) \\
&\quad + H\, \sum_{j=1}^{i+1} \, \left(\sum_{\ell=1}^{s^\fast} a^\fastfast_{k,\ell} \,   \gamma_{i,j}( c_\ell^\fast ) \right)\, \funs \bigl( Y_j^\slow \bigr), \quad k=1,\dots,s^\fast,
\end{split}
\end{equation}
where $V_{k}^{\{\f,i\}}$ is the $k$-th stage of the fast discrete sub-step $i$.
The corresponding slow stages are advanced as follows:
%
\begin{equation}
\label{eqn:MRI_discrete_slow_stage}
\begin{split}
Y_{i+1}^\slow &= Y_{i}^\slow +  H\, \sum_{j=1}^{i+1} \left( \sum_{\ell=1}^{s^\fast} b^\fast_{\ell} \, \gamma_{i,j}( c_\ell^\fast ) \right)\, \funs \bigl( Y_j^\slow \bigr) \\
&\quad + H\, \deltac_{i}\, \sum_{\ell=1}^{s^\fast} b_{\ell}^\fast\, \funf \bigl( V_{\ell}^{\{\f,i\}} \bigr),  \quad i=1,\dots,s^\slow.
\end{split}
\end{equation}
%
%
Iterating after the slow stages \eqref{eqn:MRI_discrete_slow_stage} yields:
\ifreport 
\begin{equation*}
\begin{split}
 Y_{i}^\slow  
 &=  y_n +  H\,\sum_{\lambda=1}^{i-1} \,\sum_{j=1}^{\lambda+1} \left( \sum_{\ell=1}^{s^\fast} b^\fast_{\ell} \, \gamma_{\lambda,j}( c_\ell^\fast ) \right)\, \funs \bigl( Y_j^\slow \bigr) \\
&\quad + H\,\sum_{\lambda=1}^{i-1} \, \sum_{j=1}^{s^\fast} \deltac_{\lambda}\, b_{j}^\fast\, \funf \bigl( V_{j}^{\{\f,\lambda\}} \bigr)
\end{split}
\end{equation*}
and therefore:
\fi 
\begin{equation}
\label{eqn:MRI_discrete_slow_stage_iterated}
\begin{split}
Y_{i}^\slow  
&=  y_n + H\, \sum_{j=1}^{i} \sum_{p=\max(j-1,1)}^{i-1} \left( \sum_{\ell=1}^{s^\fast} b^\fast_{\ell} \, \gamma_{p,j}( c_\ell^\fast ) \right)\, \funs \bigl( Y_j^\slow \bigr) \\
&\quad + H\, \sum_{\lambda=1}^{i-1}  \sum_{j=1}^{s^\fast} \deltac_{\lambda}\, b_{j}^\fast\, \funf \bigl( V_{j}^{\{\f,\lambda\}} \bigr) \\
&=  y_n + H\, \sum_{j=1}^{i} a_{i,j}^\slowslow\, \funs \bigl( Y_j^\slow \bigr) 
+ H\, \sum_{\lambda=1}^{i-1}  \sum_{j=1}^{s^\fast} a_{i,j}^{\{\s,\f,\lambda\}}\, \funf \bigl( V_{j}^{\{\f,\lambda\}} \bigr),
\end{split}
\end{equation}
\end{subequations}
where the last equation writes \eqref{eqn:MRI_discrete_slow_stage_iterated} as a slow MRI-GARK stage \eqref{eqn:GARK-MR-slow-stage}. From this we identify the slow method coefficients as:
\begin{eqnarray*}
a_{i,j}^\slowslow &\coloneqq& \begin{cases}
0 & i=1, \\
\sum_{p=\max(j-1,1)}^{i-1}  b^\fast\,^T \, \gamma_{p,j}\big( c^\fast \big)  & 1 \le j \le i, \\
0 & i+1 \le j \le s^\slow,
\end{cases} \\
b_{j}^\slow &\coloneqq& \sum_{p=j}^{s^\slow}  b^\fast\,^T \, \gamma_{p,j}\big( c^\fast \big).
\end{eqnarray*}
Using \eqref{eqn:gamma-as-power-series} and the fact that
$b^\fast\,^T \, c^{\{\f\} \times k} = 1/(k+1)$ for any $k$ due to the arbitrary accuracy of the fast scheme, we have that:
\[
b^\fast\,^T \, \gamma_{p,j}\big( c^\fast \big) = \sum_{k \ge 0} \gamma_{p,j}^k\, b^\fast\,^T \, c^{\{\f\} \times k} = \sum_{k \ge 0} \frac{\gamma_{p,j}^k}{k+1} = \overline{\gamma}_{p,j}.
\]
Consequently, the slow method coefficients satisfy:
\begin{eqnarray*}
a_{i,j}^\slowslow &=& \begin{cases}
0 & i=1, \\
\sum_{p=\max(j-1,1)}^{i-1}  \overline{\gamma}_{p,j}  & 1 \le j \le i,~ i \ge 2, \\
0 & i \le j \le s^\slow,
\end{cases}, \qquad
b_{j}^\slow = \sum_{p=j}^{s^\slow} \overline{\gamma}_{p,j},
\end{eqnarray*}
which establishes the equations \eqref{eqn:MRI-butcher-slow}.

We identify the slow-fast coupling coefficients as follows:
\begin{eqnarray*}
a_{i,j}^{\{\s,\f,\lambda\}} &\coloneqq& \begin{cases}
\deltac_{\lambda}\, b_{j}^\fast & 1 \le \lambda \le i-1, \\
0 & i \le \lambda \le s^\slow,
\end{cases} \\
A^{\{\s,\f,\lambda\}} &=& 
\ifreport 
\deltac_{\lambda}  
\begin{blockarray}{cc}
\begin{block}{[c]c}
 0 &\scriptstyle 1 \\ \vdots & \\ b^\fast\,^T &\scriptstyle  \lambda+1 \\ \vdots & \\   b^\fast\,^T &\scriptstyle  s^\slow \\ \end{block}\end{blockarray} 
 \fi 
= \deltac_{\lambda}\, \mathbf{g}_{\lambda+1}\,b^\fast\,^T 
\in \Re^{s^\slow \times s^\fast}, \quad \lambda=1,\dots,s^\slow,
\end{eqnarray*}
which establishes \eqref{eqn:MRI-butcher-Asf}. Moreover:
\begin{eqnarray*}
\c^\slowfast &=& 
\sum_{\lambda=1}^{s^\slow-1} A^{\{\s,\f,\lambda\}}\,\one^\fast
=\sum_{\lambda=1}^{s^\slow-1} \deltac_{\lambda}\, \mathbf{g}_{\lambda+1}, \\
\c^\slowfast_i &=& \sum_{\lambda=1}^{s^\slow-1} \left( \cbase_{\lambda+1} - \cbase_{\lambda} \right)\, \left(\mathbf{g}_{\lambda+1}\right)_i
=  \sum_{\lambda=1}^{i-1} \left( \cbase_{\lambda+1} - \cbase_{\lambda} \right) = \cbase_i, 
\end{eqnarray*}
which proves \eqref{eqn:MRI-butcher-csf}.

Replacing \eqref{eqn:MRI_discrete_slow_stage_iterated} into the discrete fast stage equations \eqref{eqn:MRI_discrete_fast_stage} leads to:
\begin{eqnarray*}
V_{k}^{\{\f,i\}} & = & y_n + H\, \sum_{\lambda=1}^{i-1}  \sum_{j=1}^{s^\fast} \deltac_{\lambda}\, b_{j}^\fast\, \funf \bigl( V_{j}^{\{\f,\lambda\}} \bigr)  + H\,\sum_{j=1}^{s^\fast} a^\fastfast_{k,j} \, \deltac_{i}\,  \funf \bigl( V_{j}^{\{\f,i\}} \bigr) \\
&& + H\, \sum_{j=1}^{i} a^\slowslow_{i,j}\, \funs \bigl( Y_j^\slow \bigr)
+ H\, \sum_{j=1}^{i} \, \left(\sum_{\ell=1}^{s^\fast} a^\fastfast_{k,\ell} \,   \gamma_{i,j}( c_\ell^\fast ) \right)\, \funs \bigl( Y_j^\slow \bigr), \\
%
& = & y_n + H\, \sum_{\lambda=1}^{i}  \sum_{j=1}^{s^\fast} a_{k,j}^{\{\f,\f,i,\lambda\}}\, \funf \bigl( V_{j}^{\{\f,\lambda\}} \bigr) + H\, \sum_{j=1}^{i} a_{k,j}^{\{\f,\s,i\}} \, \funs \bigl( Y_j^\slow \bigr).
\end{eqnarray*}
where the last equation is the generic MR-GARK fast stage \eqref{eqn:GARK-MR-fast-stage}. We identify the fast scheme coefficients as:
\ifreport 
\begin{eqnarray*}
a_{k,j}^{\{\f,\f,i,\lambda\}} &\coloneqq & 
\begin{cases}
\deltac_{\lambda}\, b_{j}^\fast & 1 \le \lambda \le i-1,\\
\deltac_{i}\, a^\fastfast_{k,j} & \lambda = i, \\
0 & i+1 \le \lambda \le s^\slow,
\end{cases}
\end{eqnarray*}
or
\fi 
\begin{eqnarray*}
A^{\{\f,\f,i,\lambda\}} &\coloneqq & 
\begin{cases}
\deltac_{\lambda}\, \one^\fast\,b^\fast\,^T & 1 \le \lambda \le i-1,\\
\deltac_{i}\, A^\fastfast & \lambda = i, \\
0 & i+1 \le \lambda \le s^\slow,
\end{cases}
\quad i = 1,\dots,s^\slow,
\end{eqnarray*}
which shows \eqref{eqn:MRI-butcher-Aff}, and
\begin{equation}
\label{eqn:MRI-butcher-cffi}
\c^{\{\f,\f,i\}} \coloneqq \sum_{\lambda=1}^i A^{\{\f,\f,i,\lambda\}}\,\one^\fast =\cbase_{i}  \, \one^\fast +  \deltac_{i} \,c^\fast, 
\end{equation}
which establishes \eqref{eqn:MRI-butcher-cf}.
Moreover:
\begin{eqnarray*}
\mathbf{b}^{\{\f,i\}}\,^T & = & \deltac_{i}\, b^\fast\,^T,
\end{eqnarray*}
which proves \eqref{eqn:MRI-butcher-bf}.

For the fast-slow coupling we have that:
\ifreport 
\begin{eqnarray*}
a_{k,j}^{\{\f,\s,i\}} \coloneqq \begin{cases}
a^\slowslow_{i,j} + \sum_{\ell=1}^{s^\fast}  a^\fastfast_{k,\ell} \,   \gamma_{i,j}( c_\ell^\fast ) & 1 \le j \le i, \\
0 & i+1 \le j \le s^\slow,
\end{cases}
\end{eqnarray*}
or, in matrix form:
\fi 
\begin{eqnarray*}
A^{\{\f,\s,i\}} \coloneqq \begin{cases}
\one^\fast\, \Abase_{i,:} 
+ \sum_{k \ge 0}\big( A^\fastfast \, c^{\{\f\} \times k} \big) \, \boldsymbol{\Gamma}^k_{i,:}  & 1 \le j \le i, \\
0 & i+1 \le j \le s^\slow,
\end{cases}
\end{eqnarray*}
which establishes \eqref{eqn:MRI-butcher-fast-slow}.
\end{proof}

\subsection{Order  conditions}

In this section we derive order conditions of the MRI-GARK schemes for up to order four. First, we define a set of useful coefficients.
\begin{definition}[Some B-series coefficients]

Consider the following Butcher tree \cite{Hairer_book_I}:
\[
\mathfrak{t}_k \coloneqq [\underbrace{\tau,\dots,\tau}_{k~\textnormal{times}}] \in T,
\]
where $\tau \in T$ is the tree of order one and $[\cdot]$ is the operation of joining subtrees by a root, and the following B-series coefficients of the exact solution of the fast sub-system:
\begin{subequations}
\label{eqn:zeta-nu-mu}
\begin{equation}
\begin{split}
\zeta_k &\coloneqq 1/\gamma\bigl( [\mathfrak{t}_k] \bigr) =  1/\bigl( (k+1)(k+2) \bigr);
\\
\omega_k&\coloneqq  1/\gamma\bigl( [\tau,\mathfrak{t}_k] \bigr) = 1/\bigl( (k+1)(k+3) \bigr); 
\\
\xi_k &\coloneqq 1/\gamma\bigl( [[\mathfrak{t}_k]] \bigr) = 1/\bigl( (k+1)(k+2)(k+3) \bigr). 
\end{split}
\end{equation}
For a fast Runge-Kutta method $(A^\fastfast,b^\fast,c^\fast)$ of arbitrary accuracy it holds that:
\begin{equation}
\begin{split}
b^\fast\,^T\,A^\fastfast \, c^\fast\,^{\times k} = \zeta_k\,, \quad
b^\fast\,^T\,(c^\fast \times A^\fastfast \, c^\fast\,^{\times k}) = \omega_k, &\\
b^\fast\,^T\,A^\fastfast\,A^\fastfast \, c^\fast\,^{\times k} = \xi_k,& \quad \forall k \ge 0.
\end{split}
\end{equation}
\end{subequations}
Using \eqref{eqn:zeta-nu-mu} we define the following matrix:
\begin{equation}
\label{eqn:As-bar}
\mathfrak{A}^\slowslow \coloneqq \Abase + \sum_{k \ge 0} \zeta_k\, \boldsymbol{\Gamma}^k.
\end{equation}
\end{definition}
%
\subsubsection{Internal consistency}

\begin{theorem}[Internal consistency conditions]

The MRI-GARK method fulfills the ``internal consistency'' conditions:
%
\begin{equation}
\label{eqn:internal-consistency}
\mathbf{c}^\slowfast = \mathbf{c}^\slowslow \equiv \cbase \quad \textnormal{and} \quad
\mathbf{c}^\fastfast = \mathbf{c}^\fastslow 
\end{equation}
for any fast method iff the following conditions hold:
\begin{equation}
\label{eqn:gamma-internal-consistency}
\begin{split}
\boldsymbol{\Gamma}^0 \cdot \one^\slow &= \deltac \quad \textnormal{and}\quad
\boldsymbol{\Gamma}^k \cdot \one^\slow = 0 \quad \forall k \ge 1.
\end{split}
\end{equation}
\end{theorem}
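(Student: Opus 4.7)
The first equality $\mathbf{c}^\slowfast = \mathbf{c}^\slowslow \equiv \cbase$ is immediate and unconditional: equations \eqref{eqn:MRI-butcher-css} and \eqref{eqn:MRI-butcher-csf} read off $\cbase$ for both sides, regardless of the fast method or the choice of $\boldsymbol{\Gamma}^k$. So the real content of the theorem is in the fast consistency $\mathbf{c}^\fastfast = \mathbf{c}^\fastslow$, and that is where I would focus the argument.

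My plan is to substitute \eqref{eqn:MRI-butcher-cf} for $\mathbf{c}^\fastfast$ and \eqref{eqn:MRI-butcher-cfs} for $\mathbf{c}^\fastslow$, cancel the common term $\cbase \otimes \one^\fast$, and peel off the $k=0$ contribution using $A^\fastfast \one^\fast = c^\fast$. This reduces the fast consistency condition to the stage-wise identity
\[
\bigl(\deltac_i - (\boldsymbol{\Gamma}^0 \one^\slow)_i\bigr)\, c^\fast \;=\; \sum_{k \ge 1} (\boldsymbol{\Gamma}^k \one^\slow)_i \, A^\fastfast c^{\{\f\}\times k}, \qquad i = 1, \dots, s^\slow,
\]
which must hold for every admissible fast method. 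The sufficiency ($\Leftarrow$) direction is then a one-line substitution: plugging in $\boldsymbol{\Gamma}^0 \one^\slow = \deltac$ and $\boldsymbol{\Gamma}^k \one^\slow = 0$ for $k \ge 1$ makes both sides vanish simultaneously.

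For necessity ($\Rightarrow$), I would exploit that the slow-tendency coefficients $\boldsymbol{\Gamma}^k$ are intrinsic to the MRI-GARK scheme and are fixed independently of which fast RK method enters the accuracy analysis. Since the polynomial in \eqref{eqn:gamma-as-power-series} has some finite degree $N$, only $\boldsymbol{\Gamma}^0, \dots, \boldsymbol{\Gamma}^N$ can be nonzero. I would then select a high-order fast method --- e.g.\ a Gauss--Legendre collocation scheme with $s^\fast \ge N+1$ distinct abscissae, for which $A^\fastfast$ is invertible and $c^\fast$ has distinct entries --- so that the moment vectors $\{c^\fast, A^\fastfast c^\fast, \dots, A^\fastfast c^{\{\f\}\times N}\}$ are linearly independent in $\Re^{s^\fast}$. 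Matching coefficients in the displayed identity against this basis forces $(\boldsymbol{\Gamma}^0 \one^\slow)_i = \deltac_i$ from the $c^\fast$ direction and $(\boldsymbol{\Gamma}^k \one^\slow)_i = 0$ for every $k \ge 1$ and every $i$, which is exactly \eqref{eqn:gamma-internal-consistency}.

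The only delicate point is justifying the existence of a fast method with the required linear independence. I would argue this by invoking the Vandermonde-type structure of the moment vectors at $N+1$ distinct collocation nodes together with nonsingularity of the collocation $A^\fastfast$, both of which are classical. Beyond this, everything reduces to bookkeeping with the Kronecker-product formulas already established in the preceding lemma, so I do not anticipate any further obstacles.
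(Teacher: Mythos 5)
Your proposal is correct and follows essentially the same route as the paper: both note that the slow consistency is automatic from \eqref{eqn:MRI-butcher-css} and \eqref{eqn:MRI-butcher-csf}, reduce the fast consistency to equating $\c^{\{\f,\f,i\}}$ with $\c^{\{\f,\s,i\}}$, cancel the common $\cbase_i\,\one^\fast$ term, and match the coefficient of each moment vector $A^\fastfast\, c^{\{\f\}\times k}$. The only difference is that where the paper simply asserts one may ``equate separately different powers $k$,'' you supply the justification for the necessity direction by exhibiting a collocation fast method with at least $N+1$ distinct nonzero nodes, for which $A^\fastfast c^{\{\f\}\times k} = c^{\{\f\}\times(k+1)}/(k+1)$ and the resulting vectors are linearly independent by a Vandermonde argument --- a legitimate and worthwhile tightening of the paper's argument.
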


\begin{proof}
From \eqref{eqn:MRI-butcher-css} and \eqref{eqn:MRI-butcher-csf} we see that the first equation of \eqref{eqn:internal-consistency} is automatically satisfied. 
Next, we write the second internal consistency equation \eqref{eqn:internal-consistency} in the following equivalent form by equating  \eqref{eqn:MRI-butcher-cffi} and \eqref{eqn:MRI-butcher-cfs}:
\begin{equation*}
\underbrace{c_{i}^\slow  \, \one^\fast +  \deltac_{i} \,c^\fast}_{\c^{\{\f,\f,i\}}} 
=
\underbrace{c_{i}^\slow  \, \one^\fast +  \sum_{k \ge 0} \left( \boldsymbol{\Gamma}^k\,\one^\slow \right)_i \,(A^\fastfast \, c^\fast\,^{\times k})}_{\mathbf{c}^{\{\f,\s,i\}}}.
\end{equation*}
This relation has to be satisfied for any $i=1,\dots,s^\slow$ independently of the choice of the fast discretization method.
In order to achieve this we equate separately different powers $k$ to obtain:
\begin{eqnarray*}
\bigl( \boldsymbol{\Gamma}^0\,\one^\slow \bigr)_i = \deltac_i; 
\qquad
\bigl( \boldsymbol{\Gamma}^k\,\one^\slow \bigr)_i = 0, ~~ \forall k \ge 1;
\qquad i=1,\dots,s^\slow,
\end{eqnarray*}
which establishes \eqref{eqn:gamma-internal-consistency}.
%
%
\end{proof}

\begin{remark}
Assuming that both the fast and the slow methods have order at least two, the internal consistency conditions \eqref{eqn:gamma-internal-consistency} imply that the overall scheme is second order.
\end{remark}

\subsubsection{Fast order conditions}
Since the fast component is solved exactly (or equivalently, using a base Runge-Kutta scheme of arbitrary accuracy) the fast order conditions for any order $p$ are implicitly satisfied.

\subsubsection{Slow order conditions}
The slow component of the MRI-GARK method \eqref{eqn:MRI-butcher-slow} is the Runge-Kutta scheme $\bigl( \Abase,\bbase, \cbase\bigr)$. The MRI-GARK slow order $p$ conditions are fulfilled by selecting a slow base method of order $p$.
\begin{remark}[Coefficients of the slow base method]
\ifreport
We now take a closer look at the slow method coefficients \eqref{eqn:MRI-butcher-slow}. Note that the matrix $\mathbf{E}$ in \eqref{eqn:MRI-butcher-Ass} is detailed as follows:
\begin{equation}
\label{eqn:matrix-E}
\mathbf{E} 
=\begin{bmatrix} 
0 & \cdots & \cdots & 0 \\
1 & \ddots &  & \vdots \\
\vdots & \ddots & \ddots & \vdots \\ 
1 & \dots & 1 & 0
\end{bmatrix}, \qquad
\bigl(\mathbf{I}_{s^\slow\times s^\slow} + \mathbf{E}\bigr)^{-1}  = \begin{bmatrix} 1 &  \cdots & \cdots & 0 \\
-1 & \ddots &  & \vdots \\ \vdots & \ddots & \ddots &  \\ 0 & \cdots & -1 & 1 \end{bmatrix}.
\end{equation}
The first rows of $\mathbf{E}$ and $\A^\slowslow$ are zeros. Eliminating the first row of zeros from the equation \eqref{eqn:MRI-butcher-Ass}, and appending equation \eqref{eqn:MRI-butcher-bs} as an additional row, leads to:
\[
\renewcommand{\arraystretch}{1.3}
\underbrace{
\begin{bmatrix} \mathbf{E}_{2:s^\slow,:} \\ \one^\fast\,^T \end{bmatrix}}_{\mathbf{I}_{s^\slow\times s^\slow} + \mathbf{E}} \cdot \overline{\boldsymbol{\Gamma}} 
= \begin{bmatrix} \Abase_{2:s^\slow,:} \\\bbase\,^T \end{bmatrix},
\quad \Rightarrow \quad
 \overline{\boldsymbol{\Gamma}} =\bigl(\mathbf{I}_{s^\slow\times s^\slow} + \mathbf{E}\bigr)^{-1}\, \begin{bmatrix} \Abase_{2:s^\slow,:} \\\bbase\,^T \end{bmatrix}.
\]
Equation \eqref{eqn:matrix-E} leads to the following result.
\fi
The integrated gamma coefficients \eqref{eqn:gamma-as-power-series} satisfy:
\begin{equation}
\label{eqn:gamma-bar}
\overline{\gamma}_{i,j} = 
\begin{cases} 
\abase_{i+1,j} - \abase_{i,j}, & i=1,s^\slow-1, \\[3pt]
\bbase_{j} - \abase_{s^\slow,j}, & i=s^\slow.
\end{cases}
\end{equation}
\end{remark}

\begin{remark}[Coefficients of the slow embedded method]
Consider now the embedded slow method $\big( \Abase, \widehat{b}^\slow \big)$, and the coefficients $\widehat{\boldsymbol{\Gamma}}^k$ of corresponding MRI-GARK method given by the equations \eqref{eqn:MRI-butcher-Ass} and \eqref{eqn:MRI-butcher-bs}. From the structure of the error controller discussed in \cref{rem:MRI-GARK-embedded} the coefficients of the main and the embedded methods differ only in the last row. 
\ifreport
Since $\mathbf{E}\,\widehat{\boldsymbol{\Gamma}}^k = \mathbf{E}\,\widehat{\boldsymbol{\Gamma}}^k$ for all $k$, equation \eqref{eqn:MRI-butcher-Ass} is automatically satisfied. (The last column of $\mathbf{E}$ is zero, and changing the last row of $\boldsymbol{\Gamma}$ does not modify the matrix product result.) 
\fi
Using \eqref{eqn:gamma-bar}, equation \eqref{eqn:MRI-butcher-bs} for the embedded method reads:
\begin{equation}
\label{eqn:gammahat-bar}
\widehat{\gamma}^k_{i,j} = \gamma^k_{i,j} ~~\textnormal{for}~~ i = 1,\dots,s^\slow-1,
\qquad
\widehat{\overline{\gamma}}_{s^\slow,j} = 
\widehat{b}_{j}^\slow - a_{s^\slow,j}^\slow.
\end{equation}
\end{remark}

\begin{lemma}[Some useful formulas]
%
We start with several formulas that will prove useful in the derivation of order conditions. From \eqref{eqn:MRI-butcher-Asf} and \eqref{eqn:MRI-butcher-cf} we have:
\begin{equation}
\label{eqn:MRI-butcher-Asf-cf}
\begin{split}
\A^\slowfast \c^\fast &=
\sum_{i=1}^{s^\slow}  
\left(\cbase_{i+1}-c_{i}^\slow\right)\, \mathbf{g}_{i+1} b^\fast\,^T \,\left(\cbase_{i}  \, \one^\fast +  \deltac_{i} \,c^\fast \right) 
=\frac{1}{2}\,\cbase\,^{\times 2}.
\end{split}
\end{equation}
From \eqref{eqn:MRI-butcher-Afs}:
\begin{equation}
\label{eqn:MRI-butcher-Afs-cs}
\begin{split}
\ifreport
\A^{\{\f,\s,i\}}\, \cbase 
&=  \left( \one^\fast\, \Abase_{i,:} +   (A^\fastfast \, c^\fast\,^{\times k})\,\sum_{k \ge 0} \boldsymbol{\Gamma}_{i,:}^k \right)\, \cbase \\
\fi
\A^{\{\f,\s,i\}}\, \cbase 
&=  \one^\fast\, \big(\Abase\,\cbase\big)_{i} + \sum_{k \ge 0} \big(A^\fastfast \, c^\fast\,^{\times k}\big)\, \big(\boldsymbol{\Gamma}^k\, \cbase\big)_{i},
\quad i=1,\dots,s^\slow.
\end{split}
\end{equation}
%
%
From \eqref{eqn:MRI-butcher-Aff} and \eqref{eqn:MRI-butcher-cf} we have that:
\begin{equation}
\label{eqn:MRI-butcher-Aff-cf}
\begin{split}
\ifreport
(\A^\fastfast \c^\fast)_{\lambda} 
&= \sum_{j=1}^{\lambda -1}
\deltac_j\,  \one^\fast  b^\fast\,^T\,\left( c_{j}^\slow  \, \one^\fast +  \deltac_j\,c^\fast \right) \\
& \quad + \delta\cbase_{\lambda}\, A^\fastfast\,\left( \cbase_{\lambda}  \, \one^\fast +  \deltac_\lambda\,c^\fast \right) \\
&= \sum_{j=1}^{\lambda -1}
\left( \deltac_j\, c_{j}^\slow   +  \frac{1}{2}\, \deltac_j\,^2 \right) \, \one^\fast  \\
& \quad + \delta\cbase_{\lambda}\, \cbase_{\lambda}  \, c^\fast +  \deltac_\lambda\,^2\,\left( A^\fastfast\,c^\fast \right) \\
&= \sum_{j=1}^{\lambda -1}
\frac{c_{j+1}^\slow\,^2  - c_j^\slow\,^2}{2} \, \one^\fast  + \delta\cbase_{\lambda}\, \cbase_{\lambda}  \, c^\fast +  \deltac_\lambda\,^2\,\left( A^\fastfast\,c^\fast \right) \\
\fi
(\A^\fastfast \c^\fast)_{\lambda} &=
\frac{1}{2} \cbase_{\lambda}\,^2\, \one^\fast +\cbase_{\lambda}  \, \deltac_{\lambda}\, c^\fast  + \deltac_{\lambda}\,^2\, A^\fastfast\,c^\fast. 
\end{split}
\end{equation}
From \eqref{eqn:MRI-butcher-Asf} it follows that:
\begin{equation}
\label{eqn:MRI-butcher-bs-Asf}
\begin{split}
\b^\slow\,^T \, \A^{\{\s,\f,i\}} 
&= \deltac_{i}\, \bigl(\bbase\,^T \, \mathbf{g}_{i+1} \bigr)\, b^\fast\,^T  
= \deltac_{i}\, \Bigl( \sum_{\ell=i+1}^{s^\slow}\bbase_{\ell}\Bigr)\, b^\fast\,^T.
\end{split}
\end{equation}
Finally, from \eqref{eqn:MRI-butcher-Afs} we have that:
\begin{equation}
\label{eqn:MRI-butcher-bf-Afs}
\begin{split}
\ifreport
\b^\fast\,^T \, \A^\fastslow 	 &= \sum_{i=1}^{s^\slow} \deltac_{i}\,\b^\fast\,^T \bigg( \one^\fast\, \Abase_{i,:} + \sum_{k \ge 0}  (A^\fastfast \, c^\fast\,^{\times k})\, \boldsymbol{\Gamma}_{i,:}^k \bigg) \\
&= \sum_{i=1}^{s^\slow} \deltac_{i}\, \bigg( \Abase_{i,:} + \sum_{k \ge 0}  \zeta_k\, \boldsymbol{\Gamma}_{i,:}^k \bigg) \\
\fi
\b^\fast\,^T \, \A^\fastslow 	 &= \deltac\,^T\, \mathfrak{A}^\slowslow.
\end{split}
\end{equation}
\end{lemma}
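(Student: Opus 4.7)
The plan is to verify the five identities one at a time by direct substitution of the Butcher components \eqref{eqn:MRI-butcher-fast}--\eqref{eqn:MRI-butcher-fast-slow} just established, together with the quadrature identities $b^\fast\,^T\one^\fast=1$, $b^\fast\,^T c^\fast\,^{\times k}=1/(k+1)$, and $b^\fast\,^T A^\fastfast c^\fast\,^{\times k}=\zeta_k$ that the fast scheme satisfies by virtue of its arbitrary accuracy. The automatic identity $\cbase_1=0$, which follows from $\cbase = \mathbf{E}\,\overline{\boldsymbol{\Gamma}}\,\one^\slow$ and the zero first row of $\mathbf{E}$ in \eqref{eqn:MRI-butcher-Ass}, will be what closes the telescoping sums that arise below.

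For \eqref{eqn:MRI-butcher-Asf-cf} I would substitute $\A^{\{\s,\f,i\}}=\deltac_i\,\mathbf{g}_{i+1}\,b^\fast\,^T$ from \eqref{eqn:MRI-butcher-Asf} together with $\c^{\{\f,\f,i\}}=\cbase_i\,\one^\fast+\deltac_i\,c^\fast$ from \eqref{eqn:MRI-butcher-cf}, reducing the scalar factor $b^\fast\,^T(\cbase_i\,\one^\fast+\deltac_i\,c^\fast)$ to $(\cbase_i+\cbase_{i+1})/2$. The $k$-th entry of the resulting vector is then $\sum_{i=1}^{k-1}\deltac_i(\cbase_i+\cbase_{i+1})/2=\tfrac12\sum_{i=1}^{k-1}(\cbase_{i+1}^2-\cbase_i^2)$, which telescopes to $\tfrac12\cbase_k^2$. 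Identity \eqref{eqn:MRI-butcher-Afs-cs} then drops out simply by right-multiplying the row-wise decomposition in \eqref{eqn:MRI-butcher-Afs} by $\cbase$ and reading off the corresponding entries.

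For \eqref{eqn:MRI-butcher-Aff-cf} I would exploit the block-triangular structure of $\A^\fastfast$ in \eqref{eqn:MRI-butcher-Aff}. Its $\lambda$-th block row splits into off-diagonal pieces $\deltac_j\,\one^\fast b^\fast\,^T$ acting on $\c^{\{\f,\f,j\}}$ for $j<\lambda$, whose contributions telescope exactly as above to produce $\tfrac12\cbase_\lambda^2\,\one^\fast$, plus the diagonal block $\deltac_\lambda A^\fastfast$ acting on $\c^{\{\f,\f,\lambda\}}=\cbase_\lambda\one^\fast+\deltac_\lambda c^\fast$. Using $A^\fastfast\,\one^\fast=c^\fast$, the diagonal piece evaluates to $\cbase_\lambda\deltac_\lambda\,c^\fast+\deltac_\lambda^2\,A^\fastfast c^\fast$, and adding the two pieces gives the claimed right-hand side.

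The two remaining identities are essentially bookkeeping. Formula \eqref{eqn:MRI-butcher-bs-Asf} is immediate once one notes that $\bbase\,^T\,\mathbf{g}_{i+1}=\sum_{\ell=i+1}^{s^\slow}\bbase_\ell$ by the very definition of the indicator vector $\mathbf{g}_{i+1}$. For \eqref{eqn:MRI-butcher-bf-Afs} I would left-multiply the Kronecker decomposition $\A^\fastslow=\one^\fast\otimes\Abase+\sum_{k\ge 0}(A^\fastfast c^\fast\,^{\times k})\otimes\boldsymbol{\Gamma}^k$ from \eqref{eqn:MRI-butcher-Afs} by $\b^\fast\,^T=\deltac\,^T\otimes b^\fast\,^T$ from \eqref{eqn:MRI-butcher-bf}, and apply the two scalar identities $b^\fast\,^T\one^\fast=1$ and $b^\fast\,^T A^\fastfast c^\fast\,^{\times k}=\zeta_k$ factor by factor to obtain $\deltac\,^T\bigl(\Abase+\sum_{k\ge 0}\zeta_k\boldsymbol{\Gamma}^k\bigr)$, which is $\deltac\,^T\mathfrak{A}^\slowslow$ by the definition \eqref{eqn:As-bar}. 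None of the five steps poses a genuine obstacle; the one substantive observation is the telescoping $\deltac_i(\cbase_i+\cbase_{i+1})=\cbase_{i+1}^2-\cbase_i^2$ responsible for the clean $\tfrac12\cbase^{\times 2}$ right-hand sides in the first and third identities.
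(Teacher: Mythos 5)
Your proposal is correct and follows essentially the same route as the paper: direct substitution of the Butcher blocks, the quadrature identities $b^{\{\f\}\,T}\one^{\{\f\}}=1$, $b^{\{\f\}\,T}c^{\{\f\}\times k}=1/(k+1)$, $b^{\{\f\}\,T}A^{\fastfast}c^{\{\f\}\times k}=\zeta_k$, and the telescoping $\deltac_i(\cbase_i+\cbase_{i+1})=\cbase_{i+1}{}^2-\cbase_i{}^2$. Your explicit remark that $\cbase_1=0$ (forced by the zero first row of $\mathbf{E}$) is what the paper uses tacitly to close the telescoping sums, so nothing is missing.
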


\subsubsection{Third order coupling conditions}
%
\begin{theorem}[Third order coupling condition]
An internally consistent MRI-GARK method \eqref{eqn:MIS-additive} has order three iff the slow base scheme has order at least three, and the following coupling condition holds:
\begin{equation}
\label{eqn:gamma-order-3}
\deltac\,^T\, \mathfrak{A}^\slowslow\,  \cbase = \frac{1}{6},
\end{equation}
where the coefficients $\zeta_k$ are defined in \eqref{eqn:zeta-nu-mu}, and $\mathfrak{A}^\slowslow$ in \eqref{eqn:As-bar}.
\end{theorem}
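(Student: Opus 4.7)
The plan is to invoke the GARK accuracy theory: the preceding lemma recasts the MRI-GARK scheme as a multirate GARK method whose tableau entries are given explicitly in \eqref{eqn:MRI-butcher-fast}--\eqref{eqn:MRI-butcher-fast-slow}. So I only need to verify the standard order-3 GARK (bicolored) order conditions for that tableau. The fast-only conditions are automatic because the fast component is a Runge--Kutta scheme of arbitrary accuracy, and the slow-only conditions $\bbase\,^T\,\cbase\,^{\times 2} = 1/3$ and $\bbase\,^T\,\Abase\,\cbase = 1/6$ hold precisely because the slow base method has order at least three. All that remains is to verify the genuinely coupled order-3 conditions.

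Next, I would use internal consistency, already available in \eqref{eqn:internal-consistency}, to collapse the coupled trees. The bushy tree condition $\b^\slow\,^T(\c^\slowslow \times \c^\slowfast) = 1/3$ reduces, via $\c^\slowfast = \c^\slowslow = \cbase$, to $\bbase\,^T\cbase\,^{\times 2} = 1/3$, which is already covered by the order-3 slow base method; the analogous bushy condition on the fast side reduces in the same way. After this reduction, only the two ``tall'' coupling conditions remain, namely
\begin{equation*}
\b^\slow\,^T\,\A^\slowfast\,\c^\fast = \tfrac{1}{6}
\quad\text{and}\quad
\b^\fast\,^T\,\A^\fastslow\,\c^\slow = \tfrac{1}{6}.
\end{equation*}

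For the first identity I apply \eqref{eqn:MRI-butcher-Asf-cf}, which gives $\A^\slowfast\c^\fast = \tfrac{1}{2}\,\cbase\,^{\times 2}$. Combining this with the slow order-3 quadrature identity $\bbase\,^T\cbase\,^{\times 2} = 1/3$ shows that this coupling condition is automatically satisfied and imposes no new constraint on $\boldsymbol{\Gamma}$. For the second identity I apply \eqref{eqn:MRI-butcher-bf-Afs} to obtain $\b^\fast\,^T\A^\fastslow = \deltac\,^T\,\mathfrak{A}^\slowslow$, hence the condition becomes exactly $\deltac\,^T\,\mathfrak{A}^\slowslow\,\cbase = 1/6$. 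This is precisely \eqref{eqn:gamma-order-3}, completing both directions of the equivalence.

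The main obstacle in turning this sketch into a proof is not computational but bookkeeping: one must be sure that the enumeration of order-3 bicolored Butcher trees used above is complete and that internal consistency really eliminates all but these two coupled trees. Once the enumeration is verified, the two lemma identities \eqref{eqn:MRI-butcher-Asf-cf} and \eqref{eqn:MRI-butcher-bf-Afs} do all the remaining work, and the theorem follows without any further calculation.
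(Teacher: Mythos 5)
Your proposal matches the paper's proof essentially verbatim: the paper likewise appeals to the fact that internally consistent MR-GARK schemes have exactly two third-order coupling conditions, disposes of $\b^\slow\,^T\A^\slowfast\c^\fast = 1/6$ automatically via \eqref{eqn:MRI-butcher-Asf-cf} together with $\bbase\,^T\cbase\,^{\times 2}=1/3$, and converts $\b^\fast\,^T\A^\fastslow\cbase = 1/6$ into \eqref{eqn:gamma-order-3} via \eqref{eqn:MRI-butcher-bf-Afs}. The only difference is that you spell out the tree-enumeration bookkeeping that the paper delegates to the cited GARK references; the argument is correct and the same.
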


\begin{proof}
For internally consistent MR-GARK schemes there are two third order coupling conditions \cite{Sandu_2016_MR-GARK,Sandu_2015_GARK}. We proceed with checking them. If the slow base method is of at least third order, then \eqref{eqn:MRI-butcher-Asf-cf} implies that the first coupling condition is automatically satisfied:
\begin{eqnarray*}
	  \frac{1}{6} &=& \b^\slow\,^T \, \A^\slowfast  \, \c^\fast =
	   \frac{1}{2}\, \left(\bbase\,^T \,\cbase\,^{\times 2} \right) =  \frac{1}{2}\cdot\frac{1}{3}.
\end{eqnarray*}
Using  \eqref{eqn:MRI-butcher-bf-Afs} the second coupling condition reads:
\begin{eqnarray*}
\frac{1}{6} &=& \b^\fast\,^T \, \A^\fastslow  \, \cbase
= 
\deltac\,^T\, \mathfrak{A}^\slowslow\,\cbase,
\end{eqnarray*}
which establishes \eqref{eqn:gamma-order-3}.
\end{proof}
\subsubsection{Fourth order conditions}

\begin{theorem}[Fourth order coupling conditions]
An internally consistent MRI-GARK method \eqref{eqn:MIS-additive} has order four iff the slow base scheme has order at least four, and the following coupling conditions hold:
\begin{subequations}
\label{eqn:gamma-order-4}
\begin{eqnarray}
\label{eqn:gamma-order-4.A}
\qquad && \frac{1}{2}\,z^\slow \,^T\, \Abase\, \cbase 
   + \sum_{k \ge 0} \left( \deltac \times (\zeta_k\,\cbase +  \omega_k \,\deltac) \right)^T \boldsymbol{\Gamma}^k\, \cbase = \frac{1}{8},  \\
\label{eqn:gamma-order-4.C}
&& \deltac\,^T \, \mathfrak{A}^\slowslow \, \cbase\,^{\times 2} = \frac{1}{12}, \\
\label{eqn:gamma-order-4.F}
&& d^\slow\,^T \,\mathfrak{A}^\slowslow \, \cbase = \frac{1}{24},  \\
\label{eqn:gamma-order-4.H}
&& \big( \deltac\,^{\times 2} \big)^T\, \Bigl( \frac{1}{2} \Abase + \sum_{k \ge 0} \xi_k \boldsymbol{\Gamma}^k \Bigr)\,\cbase 
 + t^\slow\,^T\, \mathfrak{A}^\slowslow\,\cbase = \frac{1}{24}, \\
\label{eqn:gamma-order-4.I}
&& \deltac\,^T \, \mathfrak{A}^\slowslow\, \Abase \,\cbase = \frac{1}{24},
\end{eqnarray}
with the coefficients $\zeta_k,\omega_k,\xi_k$ defined in \eqref{eqn:zeta-nu-mu}, $\mathfrak{A}^\slowslow$ defined in \eqref{eqn:As-bar}, and:
\begin{eqnarray}
\label{eqn:gamma-order-4-z.def}
z^\slow & \coloneqq & 
\Big[ \cbase_{i+1}\,^{2}-\cbase_{i}\,^{2} \Big]_{1 \le i \le s^\slow}, \\
\label{eqn:gamma-order-4-d.def}
d^\slow & \coloneqq & 
\bigg[ \deltac_i\, \Bigl(1 - \sum_{\ell=1}^{i}\bbase_{\ell} \Bigr) \bigg]_{1 \le i \le s^\slow}, \\
\label{eqn:gamma-order-4-t.def}
t^\slow &\coloneqq& \bigg[ \sum_{j=i+1}^{s^\slow} \big( \deltac_{j}\big)^2 \bigg]_ {1 \le i \le \s^\slow}.
\end{eqnarray}
\end{subequations}
\end{theorem}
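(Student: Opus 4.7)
The plan is to specialize the general fourth-order coupling conditions of two-partition MR-GARK methods \cite{Sandu_2015_GARK,Sandu_2016_GARK-MR} to the MRI-GARK Butcher tableau derived in the preceding lemma. By assumption $(\Abase,\bbase,\cbase)$ has order four, so all pure-slow order-4 conditions are automatic; because the fast ODE is solved exactly (equivalently, by a fast scheme of arbitrary accuracy so that $\b^\fast\,^T(A^\fastfast)^p c^\fast\,^{\times k}$ matches the exact B-series values in \eqref{eqn:zeta-nu-mu}), all pure-fast order-4 conditions are automatic as well. Internal consistency \eqref{eqn:gamma-internal-consistency} then collapses many of the surviving bicolored trees into equivalent classes, and a careful enumeration leaves exactly five genuinely mixed order-4 coupling conditions that must be verified explicitly.

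For each surviving condition I would substitute the block entries \eqref{eqn:MRI-butcher-fast}--\eqref{eqn:MRI-butcher-fast-slow} and simplify using the helper identities \eqref{eqn:MRI-butcher-Asf-cf}--\eqref{eqn:MRI-butcher-bf-Afs}. The numbers $\zeta_k$, $\omega_k$, $\xi_k$ from \eqref{eqn:zeta-nu-mu} appear whenever $\b^\fast\,^T$ is contracted against $(A^\fastfast)^p c^\fast\,^{\times k}$; summing such contractions over $k$ against the power-series $\boldsymbol{\Gamma}^k$ produces the matrix $\mathfrak{A}^\slowslow$ of \eqref{eqn:As-bar}, which is why it appears in three of the five conditions. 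Specifically, \eqref{eqn:gamma-order-4.C} and \eqref{eqn:gamma-order-4.I} will come from $\b^\fast\,^T \A^\fastslow\,\cbase^{\times 2}$ and $\b^\fast\,^T \A^\fastslow\,\Abase\,\cbase$ respectively, each immediate from \eqref{eqn:MRI-butcher-bf-Afs}; \eqref{eqn:gamma-order-4.F} will come from $\b^\slow\,^T \A^\slowfast\,\A^\fastslow\,\cbase$ after chaining \eqref{eqn:MRI-butcher-bs-Asf} (producing the vector $d^\slow$) with \eqref{eqn:MRI-butcher-bf-Afs} (producing $\mathfrak{A}^\slowslow\cbase$). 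The two remaining conditions involve the quadratic ``fast-in-fast'' block of the tableau: \eqref{eqn:gamma-order-4.A} follows from the bushy tree whose root carries one fast child and one two-fast subtree, whose expansion via \eqref{eqn:MRI-butcher-Aff-cf} generates both the $\omega_k\,\deltac^{\times 2}$ and the $\zeta_k\,(\cbase\times\deltac)$ contributions, together with the residual $\tfrac{1}{2}\,z^\slow\,^T \Abase\,\cbase$ coming from the $\tfrac{1}{2}\,\cbase_\lambda^2\,\one^\fast$ piece aggregated across sub-steps; and \eqref{eqn:gamma-order-4.H} follows from the tall tree with two internal fast nodes, where \eqref{eqn:MRI-butcher-Aff-cf} applied twice yields the $\xi_k$ weights on $\deltac^{\times 2}$, the $\tfrac{1}{2}\Abase$ slow contribution, and the cumulative vector $t^\slow$ that records the $(\deltac_j)^2$ contributions from downstream sub-steps.

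The main obstacle I expect is bookkeeping: enumerating the bicolored order-4 trees modulo internal consistency so that no genuinely independent coupling condition is missed or double-counted, and handling carefully the Kronecker block sums in \eqref{eqn:MRI-butcher-Aff} and \eqref{eqn:MRI-butcher-Afs} where the sub-step index $\lambda$ mingles with the fast stage index. Once the tree enumeration is settled and the telescoping identities such as $\sum_{j<i}(\cbase_{j+1}^2-\cbase_j^2)=\cbase_i^2$ are applied, the remaining simplifications are mechanical manipulations driven entirely by the lemma identities and the B-series definitions \eqref{eqn:zeta-nu-mu}.
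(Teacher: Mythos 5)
Your overall strategy is the paper's: cast the scheme as an MR-GARK method via the preceding lemma and evaluate the general fourth-order coupling conditions of the GARK theory on the resulting block tableau. Your identification of the conditions yielding \eqref{eqn:gamma-order-4.C}, \eqref{eqn:gamma-order-4.I}, and \eqref{eqn:gamma-order-4.F}, and of the structural origin of the $\zeta_k,\omega_k,\xi_k$ weights and of $\mathfrak{A}^\slowslow$, matches the paper.

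There is, however, one concrete gap. For internally consistent MR-GARK schemes there are \emph{ten} fourth-order coupling conditions, not five, and all ten are genuinely mixed (each involves at least one of $\A^\slowfast$, $\A^\fastslow$). Your plan asserts that ``a careful enumeration leaves exactly five'' conditions, attributing the reduction to internal consistency and to the automatic satisfaction of pure-slow and pure-fast conditions; but the five conditions that drop out (in the paper's labelling, b, d, e, g, and j) are coupling conditions such as $\b^\slow\,^T(\cbase\times\A^\slowfast\c^\fast)=1/8$ and $\b^\slow\,^T\A^\slowfast\A^\fastfast\c^\fast=1/24$, and showing that they hold automatically is a substantive part of the proof, not a consequence of internal consistency. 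It requires (i) the rank-one structure $\A^{\{\s,\f,i\}}=\deltac_i\,\mathbf{g}_{i+1}\,b^\fast\,^T$, which gives $\A^\slowfast\c^\fast=\tfrac12\,\cbase\,^{\times2}$; (ii) the fourth order of the slow base method; and (iii), for conditions d and g, the telescoping identity $\sum_i\bigl(\sum_{\ell>i}\bbase_\ell\bigr)\bigl(\cbase_{i+1}\,^3-\cbase_i\,^3\bigr)=\tfrac14$ (the paper's \eqref{eqn:bc3}), which additionally uses $\cbase_1=0$. Without this step the ``iff'' is not established: you would have shown the five stated conditions necessary but not that they, together with the base-method order, are sufficient. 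A secondary inaccuracy: condition \eqref{eqn:gamma-order-4.A} does not arise from a tree expanded via \eqref{eqn:MRI-butcher-Aff-cf}; it is $\b^\fast\,^T(\c^\fast\times\A^\fastslow\c^\slow)=1/8$, expanded using $\c^\fastfast=\cbase\otimes\one^\fast+\deltac\otimes c^\fast$ together with \eqref{eqn:MRI-butcher-Afs-cs}, with $\omega_k$ entering through $b^\fast\,^T(c^\fast\times A^\fastfast c^\fast\,^{\times k})$; likewise \eqref{eqn:gamma-order-4.H} comes from $\b^\fast\,^T\A^\fastfast\A^\fastslow\c^\slow$ and uses the block lower-triangular structure of $\A^\fastfast$ directly rather than \eqref{eqn:MRI-butcher-Aff-cf}.
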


\begin{proof}

For internally consistent MR-GARK schemes there are ten coupling conditions for order four \cite{Sandu_2015_GARK}. We proceed with checking each of them.

\par{\bf Condition a.}

\begin{eqnarray*}
	  \frac{1}{8} &=& \b^\fast\,^T \, \left(\c^\fast \times \A^\fastslow \, \c^\slow \right) \\
	  \ifreport
	  &=& \b^\fast\,^T \,  \, \bigg[ \left(\cbase_{i}  \, \one^\fast +  \deltac_{i} \,c^\fast \right) \times \left(  \one^\fast\, \big(\Abase\,\cbase\big)_{i,:} + \sum_{k \ge 0} \big(A^\fastfast \, c^\fast\,^{\times k}\big)\, \big(\boldsymbol{\Gamma}^k\, \cbase\big)_{i,:}\right)\bigg]_ {1 \le i \le \s^\slow} \\
	  \fi
	  &=& \sum_{i=1}^{s^\slow} \deltac_{i} \,b^\fast\,^T \, \, \Bigl(   \one^\fast\,\cbase_{i}  \,\Abase_{i,:}\, \cbase +  \sum_{k \ge 0}  (A^\fastfast \, c^\fast\,^{\times k}) \,c_{i}^\slow  \,\boldsymbol{\Gamma}_{i,:}^k\, \cbase \\
	&&\quad + \,c^\fast\,(\deltac_{i} \,\Abase_{i,:}\, \cbase)  +  \sum_{k \ge 0}  (c^\fast \times A^\fastfast \, c^\fast\,^{\times k})\deltac_{i} \,\boldsymbol{\Gamma}_{i,:}^k\, \cbase \Bigr) \\
	\ifreport
	&=&\sum_{i=1}^{s^\slow} \deltac_{i} \, \Bigl[  \cbase_{i}  \,\Abase_{i,:}\, \cbase +  \sum_{k \ge 0} \zeta_k\,\cbase_{i}  \,\boldsymbol{\Gamma}_{i,:}^k\, \cbase \\
	&&\quad + \frac{1}{2}\,\deltac_{i} \,\Abase_{i,:}\, \cbase +  \sum_{k \ge 0}  \omega_k \, \deltac_{i} \,\boldsymbol{\Gamma}_{i,:}^k\, \cbase \Bigr] \\
	\fi
	&=&\sum_{i=1}^{s^\slow} \frac{\deltac_{i} \,\bigl( \cbase_{i+1}+\cbase_{i} \bigr) }{2}\, \Abase_{i,:}\, \cbase
	 \\
	&&\quad + \sum_{i=1}^{s^\slow} \deltac_{i} \, \sum_{k \ge 0} \bigl( \zeta_k\,\cbase_{i} + \omega_k \, \deltac_{i} \bigr) \,\boldsymbol{\Gamma}_{i,:}^k\, \cbase.
\end{eqnarray*}
Using notation \eqref{eqn:gamma-order-4-z.def} for the first term one obtains equation \eqref{eqn:gamma-order-4.A}.

\par{\bf Condition b.}
Using \eqref{eqn:MRI-butcher-Asf-cf} and the fourth order of the slow base method one checks that the second MR-GARK coupling condition is satisfied automatically:
\begin{eqnarray*}
	  \frac{1}{8} &=& \b^\slow\,^T \, \left(\cbase \times \A^\slowfast \, \c^\fast\right) = \frac{1}{2}\, \bbase\,^T \, \cbase\,^{\times 3} = \frac{1}{2}\cdot\frac{1}{4}.
\end{eqnarray*}
\par{\bf Condition c.}
Using \eqref{eqn:MRI-butcher-bf-Afs} the third coupling condition reads:
\begin{eqnarray*}
	  \frac{1}{12} &=& \b^\fast\,^T \, \A^\fastslow  \,\cbase \,^{\times 2} 
	  = \deltac\,^T\, \mathfrak{A}^\slowslow \, \cbase \,^{\times 2}, 
\end{eqnarray*}
which proves equation \eqref{eqn:gamma-order-4.C}.

\par{\bf Condition d.}
Using \eqref{eqn:MRI-butcher-slow-fast}, \eqref{eqn:MRI-butcher-cf}, and \eqref{eqn:MRI-butcher-bs-Asf} the fourth coupling condition reads:
\begin{eqnarray*}
\frac{1}{12} &=& \b^\slow\,^T \, \A^\slowfast  \,\c^\fast\,^{\times 2} \\
&=&  \sum_{i=1}^{s^\slow} \deltac_{i}\, \Bigl( \sum_{\ell=i+1}^{s^\slow}\bbase_{\ell}\Bigr) \,
b^\fast\,^T\, \bigl(\cbase_{i}  \, \one^\fast +  \deltac_{i} \,c^\fast \bigr)^{\times 2} \\
\ifreport
&=& \sum_{i=1}^{s^\slow} \deltac_{i}\, \Bigl( \sum_{\ell=i+1}^{s^\slow}\bbase_{\ell}\Bigr) \,
\Bigl(\cbase_{i}\,^{2}   + \frac{1}{3}\, \deltac_{i}\,^2 +\cbase_{i}\, \deltac_{i} \Bigr) \\
&=& \sum_{i=1}^{s^\slow} \Bigl( \sum_{\ell=i+1}^{s^\slow}\bbase_{\ell}\Bigr) \,
\Bigl( \deltac_{i}\,\cbase_{i}\,^{2}   + \frac{1}{3}\, \deltac_{i}\,^3 +\cbase_{i}\, \deltac_{i}\,^2 \Bigr) \\
&=& \sum_{i=1}^{s^\slow} \Bigl( \sum_{\ell=i+1}^{s^\slow}\bbase_{\ell}\Bigr) \,
\Bigl(  \cbase_{i+1}\,\cbase_{i}\,^{2}   -\cbase_{i}\,^{3}
+ \frac{1}{3}\, \cbase_{i+1}\,^3 - \frac{1}{3}\, \cbase_{i}\,^3 -  \cbase_{i+1}\,^2\,\cbase_{i} +  \cbase_{i+1}\, \cbase_{i}\,^2  \\
&& \qquad +\cbase_{i}\,  \cbase_{i+1}\,^2 - 2\, \cbase_{i+1}\,\cbase_{i}\,^2 + \cbase_{i}\,^3 \Bigr) \\
\fi
&=& \frac{1}{3}\, \sum_{i=1}^{s^\slow} \Bigl( \sum_{\ell=i+1}^{s^\slow}\bbase_{\ell}\Bigr) \,
\Bigl(  \cbase_{i+1}\,^3 - \cbase_{i}\,^3  \Bigr).
\end{eqnarray*}
We have that:
\begin{equation}
\label{eqn:bc3}
\begin{split}
 \sum_{i=1}^{s^\slow} \Bigl( \sum_{\ell=i+1}^{s^\slow}\bbase_{\ell}\Bigr) \,
\Bigl(  \cbase_{i+1}\,^3 - \cbase_{i}\,^3  \Bigr) 
	 &=  \sum_{\ell=2}^{s^\slow}\bbase_{\ell} \,
\Bigl(  \cbase_{\ell}\,^3 - \cbase_{1}\,^3  \Bigr) =\frac{1}{4},
 \end{split}
\end{equation}
for order four slow base methods with $\cbase_{1}=0$. Consequently, the fourth MR-GARK coupling condition is automatically satisfied.
\par{\bf Condition e.}
Using \eqref{eqn:MRI-butcher-Asf-cf} one verifies that the fifth coupling condition always holds:
\begin{eqnarray*}
	 \frac{1}{24} &=& \b^\slow\,^T \, \A^\slowslow \, \A^\slowfast \, \c^\fast = \frac{1}{2}\,\bbase\,^T \, \Abase \, \cbase\,^{\times 2} = \frac{1}{2} \cdot \frac{1}{12}.
\end{eqnarray*}
\par{\bf Condition f.}
Using \eqref{eqn:MRI-butcher-bs-Asf} and \eqref{eqn:MRI-butcher-Afs-cs}:
\begin{eqnarray*}
\frac{1}{24} &=& \b^\slow\,^T \, \A^\slowfast \, \A^\fastslow \, \cbase \\
\ifreport
&=& \sum_{i=1}^{s^\slow} \bigg(\deltac_i\, \bigg( \sum_{\ell=i+1}^{s^\slow}\bbase_{\ell} \bigg)\, b^\fast\,^T\bigg)\cdot \\
&& \cdot \bigg( \one^\fast\, \big(\Abase\,\cbase\big)_{i} + \sum_{k \ge 0} \big(A^\fastfast \, c^\fast\,^{\times k}\big)\, \big(\boldsymbol{\Gamma}^k\, \cbase\big)_{i} \bigg) \\
\fi
&=& \sum_{i=1}^{s^\slow} \bigg(\deltac_i\, \bigg( \sum_{\ell=i+1}^{s^\slow}\bbase_{\ell} \bigg)\bigg) \cdot \bigg( \big( \Abase+\sum_{k \ge 0} \zeta_k\,\boldsymbol{\Gamma}^k \big)\,\cbase \bigg)_i.
\end{eqnarray*}
Using the notation \eqref{eqn:gamma-order-4-d.def} and \eqref{eqn:As-bar} the above establishes \eqref{eqn:gamma-order-4.F}.

\par{\bf Condition g.}
Using \eqref{eqn:MRI-butcher-Aff-cf} and \eqref{eqn:MRI-butcher-bs-Asf}:
\begin{eqnarray*}
	 \frac{1}{24} &=& \b^\slow\,^T \, \A^\slowfast \, \A^\fastfast \, \c^\fast \\
	 &=&  \sum_{i=1}^{s^\slow} \left(\deltac_i\, \bigg( \sum_{\ell=i+1}^{s^\slow}\bbase_{\ell} \bigg)\, b^\fast\,^T\right)\cdot \\
	 &&\cdot \left( \frac{1}{2}\cbase_{i}\,^2\, \one^\fast +c_{i}^\slow  \, \deltac_{i}\, c^\fast  +\deltac_{i} \,^2\, A^\fast\,c^\fast  \right) \\
	 \ifreport
	 &=&  \sum_{i=1}^{s^\slow}  \bigg( \sum_{\ell=i+1}^{s^\slow}\bbase_{\ell} \bigg)\,\deltac_i\, \left( \frac{1}{2}\cbase_{i}\,^2 + \frac{1}{2}\cbase_{i}  \, \deltac_{i}  + \frac{1}{6} \deltac_{i}\,^2 \right) \\	 
	 &=&  \sum_{i=1}^{s^\slow} \bigg( \sum_{\ell=i+1}^{s^\slow}\bbase_{\ell} \bigg) \deltac_i\, \left(  \frac{1}{6}\cbase_{i} \cbase_{i+1} + \frac{1}{6}\cbase_{i}\,^2  + \frac{1}{6} \cbase_{i+1}\,^2 \right) \\
	 \fi
	 &=& \frac{1}{6}\,  \sum_{i=1}^{s^\slow}  \bigg( \sum_{\ell=i+1}^{s^\slow}\bbase_{\ell} \bigg)\,\left(\cbase_{i+1}\,^3 - \cbase_{i}\,^3\right).
\end{eqnarray*}
Using \eqref{eqn:bc3} we see that this condition is automatically fulfilled.

\par{\bf Condition h.}
Using \eqref{eqn:MRI-butcher-Afs-cs}:
\begin{eqnarray*}
\frac{1}{24} &=& \b^\fast\,^T \, \A^\fastfast \, \A^\fastslow \, \c^\slow \\
\ifreport
	 	 &=& \sum_{i=1}^{s^\slow} \left(\deltac_{i} \,^2\,b^\fast\,^T\,A^\fastfast
+\sum_{j=i+1}^{s^\slow}  \deltac_{j}\,^2\,b^\fast\,^T  \right) \cdot \\
&&\quad \cdot \left(\one^\fast\, \big(\Abase\,\cbase\big)_{i} + \sum_{k \ge 0} \big(A^\fastfast \, c^\fast\,^{\times k}\big)\, \big(\boldsymbol{\Gamma}^k\, \cbase\big)_{i} \right) \\
\fi
	 	 &=&  \sum_{i=1}^{s^\slow}  \deltac_{i} \,^2\,\left( \frac{1}{2}\, \big(\Abase\,\cbase\big)_{i} 
+ \sum_{k \ge 0} \xi_k\, \big(\boldsymbol{\Gamma}^k\, \cbase\big)_{i} \right) \\
&& +  \sum_{i=1}^{s^\slow} \bigg( \sum_{j=i+1}^{s^\slow}  \deltac_{j}\,^2 \bigg)\, \bigg( \big( \Abase\,\cbase\big)_{i} 
+\sum_{k \ge 0} \zeta_k\, \big(\boldsymbol{\Gamma}^k\, \cbase\big)_{i} \bigg).
\end{eqnarray*}
With notation \eqref{eqn:gamma-order-4-t.def} this establishes \eqref{eqn:gamma-order-4.H}.

\par{\bf Condition i.}
Using \eqref{eqn:MRI-butcher-Afs}:
\begin{eqnarray*}
\frac{1}{24} &=& \b^\fast\,^T \, \A^\fastslow \, \A^\slowslow \, \c^\slow \\
\ifreport
&=& \sum_{i=1}^{s^\slow} \deltac_{i}\,\b^\fast\,^T \bigg( \one^\fast\, \Abase_{i,:} + \sum_{k \ge 0}  (A^\fastfast \, c^\fast\,^{\times k})\, \boldsymbol{\Gamma}_{i,:}^k \bigg) \, \Abase \, \cbase \\
\fi
 &=&  \sum_{i=1}^{s^\slow} \deltac_{i}\,\bigg(\Abase_{i,:} + \sum_{k \ge 0}  \zeta_k\, \boldsymbol{\Gamma}_{i,:}^k \bigg) \, \Abase \, \cbase,
 \end{eqnarray*}
which proves \eqref{eqn:gamma-order-4.I}.

\par{\bf Condition j.}
Using \eqref{eqn:MRI-butcher-Afs} and \eqref{eqn:MRI-butcher-bf-Afs} the tenth coupling condition reads:
\begin{eqnarray*}
	 \frac{1}{24} &=& \b^\fast\,^T \, \A^\fastslow \, \A^\slowfast \, \c^\fast  
	 = \frac{1}{2}\,\deltac\,^T\, \mathfrak{A}^\slowslow \,\cbase\,^{\times 2},
\end{eqnarray*}
which is the same as condition \eqref{eqn:gamma-order-4.C}.

\end{proof}


\section{Linear stability analysis}
\label{sec:stability}
%
\subsection{Scalar stability analysis}
%
For additively partitioned systems \eqref{eqn:multirate-additive-ode} we consider the following scalar model problem:
\begin{equation}
\label{eqn:stability-test-scalar}
y' =  \lambdaf \, y + \lambdas \, y, \qquad \lambdaf, \lambdas \in \mathbb{C}^-.
\end{equation}
Let $\zf \coloneqq H\,\lambdaf$ and $\zs \coloneqq H\,\lambdas$.
The stage computation \eqref{eqn:MIS-additive-internal-ode} applied to \eqref{eqn:stability-test-scalar} solves exactly a linear ODE
\ifreport
\[
v(0) = \Ys_i, \quad
v' = \deltac_i \, \lambdaf \, v  +  \lambdas\, \sum_{j=1}^{i+1} \sum_{k\ge 0} \gamma^k_{i,j} \,
\left({\scriptstyle \frac{\theta}{H}}\right)^k \,\Ys_j,\quad \theta \in [0,H].
\]
\fi
with the following analytical solution:
\begin{equation*}
\begin{split}
\ifreport
\Ys_{i+1} &=  e^{\deltac_i \, \lambdaf\,H} \,\Ys_i  + \lambdas\, \,\sum_{j=1}^{i+1} \sum_{k\ge 0} \gamma^k_{i,j} \,\left( \int_0^H e^{\deltac_i \, \lambdaf \, (H-\theta)} \left({\scriptstyle \frac{\theta}{H}}\right)^k d\theta \right)\,\Ys_j \\
 &=  e^{\deltac_i \, \zf} \,\Ys_i + \zs\,\sum_{j=1}^{i+1} \sum_{k\ge 0} \gamma^k_{i,j} \,\left( \int_0^1 e^{\deltac_i \, \zf \, (1-t)} t^k dt \right)\,\Ys_j \\
&= \varphi_{0}\bigl( \deltac_i \, \zf \bigr) \,\Ys_i + \zs\,\sum_{j=1}^{i+1} \sum_{k\ge 0} \gamma^k_{i,j} \,\varphi_{k+1}\bigl( \deltac_i \, \zf \bigr)\,\Ys_j \\
\fi
 \Ys_{i+1} &=  \varphi_{0}\bigl( \deltac_i \, \zf \bigr)\,\Ys_i  + \zs\,\sum_{j=1}^{i+1} \mu_{i,j}\big( \zf \big)\,\Ys_j,
\end{split}
\end{equation*}
where the $\mu$ coefficients are functions of the fast variable:
\[
 \mu_{i,j}\big( \zf \big) \coloneqq \sum_{k\ge 0} \gamma^k_{i,j} \,\varphi_{k+1}\bigl( \deltac_i \, \zf \bigr),
\]
and are defined using the following family of analytical functions:
\begin{equation}
\label{eqn:phi-functions}
\begin{split}
& \varphi_0(z)\coloneqq e^z, \quad
\varphi_{k}(z) \coloneqq \int_0^1 e^{z \, (1-t)}\, t^{k-1}\, dt, \quad 
\varphi_{k+1}(z)=\frac{k\,\varphi_{k}(z)-1}{z}. 
%
\end{split}
\end{equation}

%
%

%
%
%
The MRI-GARK \eqref{eqn:MIS-component} applied to \eqref{eqn:stability-test-scalar} reveals a stability function that depends on both slow and fast variables: 
\[
y_{n+1} = R\bigl(\zf,\zs\bigr)\,y_{n}. 
\]
\begin{definition}[Scalar stability]
The scalar slow stability region is defined as:
\begin{equation}
\label{eqn:scalar-slow-stability-region}
\mathcal{S}^{\textsc{1d}}_{\rho,\alpha} = \big\{ \zs \in \mathbb{C} \mid  | R(\zf,\zs) | \le 1,~ \forall~ \zf \in \mathbb{C}^- ~ \colon~ 
|\zf| \le \rho,~
|\arg(\zf) - \pi| \le \alpha
\big\}.
\end{equation}
Thus $\zs \in \mathcal{S}^{\textsc{1d}}_\alpha$ ensures that the solution is stable for any system \eqref{eqn:stability-test-scalar} with $\lambdaf$ in an $\alpha$-wedge in the left complex semi-plane, and of absolute value bounded by $\rho$.
\end{definition}

\begin{example}[Stability functions of second order methods]
The stability function of the explicit midpoint MRI-GARK scheme \cref{eqn:explicit-midpoint} is a quadratic polynomial in $\zs$, with coefficients analytical functions of $\zf$:
\[
R_\textsc{emidp}(\zf,\zs) = \varphi_{0}\big( \zf \big)
+  \left( {\scriptstyle \frac{3}{2}} \,\varphi_{0}\big( {\scriptstyle \frac{1}{2}} \zf \big)-{\scriptstyle \frac{1}{2}}  \right)\, \varphi_{1}\big({\scriptstyle \frac{1}{2}} \zf \big) \, \zs   + {\scriptstyle \frac{1}{2}} \, \varphi_{1}^2\big({\scriptstyle \frac{1}{2}} \zf \big) \, \zs\,^2.
\,
\]
Similarly, the stability function of the implicit trapezoidal MRI-GARK scheme of \cref{eqn:implicit-trapezoidal} is a rational function in $\zs$, with coefficients analytical functions of $\zf$:
\[
R_\textsc{itrap}(\zf,\zs) = \frac{\varphi_{0}(\zf) + \left(\varphi_{1}(\zf)-{\scriptstyle \frac{1}{2}}\right) \,\zs}{1-{\scriptstyle \frac{1}{2}}\zs}.
\]
\end{example}

\subsection{Matrix stability analysis}
Following  Kv{\ae}rn{\o} \cite{Kvaerno_2000_stability-MRK}, for component partitioned systems \eqref{eqn:multirate-component-ode} we consider the following model problem:
\begin{equation}
\label{eqn:ode2d-linear}
\renewcommand{\arraystretch}{1.1}
\begin{bmatrix}
{y}^{\{\f\}} \\
{y}^{\{\s\}}
\end{bmatrix}'
=
\begin{bmatrix}
\lambdaf & \etas  \\
\etaf & \lambdas
\end{bmatrix}
\,
\begin{bmatrix} \yf \\ \ys  \end{bmatrix}
=
\underbrace{
\begin{bmatrix}
 \lambdaf & \frac{1-\xi}{\alpha } \left(\lambdaf-\lambdas\right) \\
 -\alpha \, \xi \, \left(\lambdaf-\lambdas\right) & \lambdas
\end{bmatrix}
}_{\boldsymbol{\Omega}}
\,
\begin{bmatrix} \yf \\ \ys  \end{bmatrix},
%
%
\end{equation}
%
%
where
\[
\alpha \coloneqq  \frac{\lambdaf-\lambdas+\delta}{2 \etas}, \quad
\xi  \coloneqq \frac{\lambdaf - \lambdas - \delta }{2 \left(\lambdaf-\lambdas\right)}, \quad
\delta = \sqrt{4 \etaf \etas+\left(\lambdaf-\lambdas\right)^2}.
\]   
\ifreport
The eigenvalue-eigenvector pairs of $\boldsymbol{\Omega}$ are:
\begin{equation}
\renewcommand{\arraystretch}{1.3}
\label{eqn:eigenpairs-omega}
\left\{ \xi \, \lambdaf + (1 -\xi) \, \lambdas, ~
\begin{bmatrix}
 -\frac{1}{\alpha }  \\
 \phantom{-}{\scriptstyle 1} 
\end{bmatrix} \right\},
\quad
\left\{ (1-\xi) \, \lambdaf + \xi \, \lambdas ,~
\begin{bmatrix}
 -\frac{1-\xi}{\alpha } \\
 {\scriptstyle \xi} 
\end{bmatrix} \right\}.
\end{equation}
\else
The eigenvalues of $\boldsymbol{\Omega}$ are linear combinations of the slow and fast diagonal terms, $ \xi \, \lambdaf + (1 -\xi) \, \lambdas$ and $(1-\xi) \, \lambdaf + \xi \, \lambdas$.
\fi
For $|\xi| \ll 1$ the fast sub-system has a weak influence on the slow one; the first eigenvalue is slow and the second one is fast. For $|1-\xi| \ll 1$ the slow sub-system has a weak influence on the fast one, and the first eigenvalue is fast. 

\ifreport
\begin{remark}[Scaling the system]
Let $\mu = \lambdaf/\lambdas$. We have that:
\begin{equation*}
\begin{split}
\Omega & = \lambdas\, \begin{bmatrix}
 \mu & \frac{1-\xi}{\alpha } \left(\mu-1\right) \\
 -\alpha \, \xi \, \left(\mu-1\right) & 1
\end{bmatrix} \stackrel{\mu \to 0}{\longrightarrow} 
\lambdas\, \begin{bmatrix}
 0 & \frac{\xi-1}{\alpha } \\
 \alpha \, \xi  & 1
\end{bmatrix}
\\
\Omega & =
\lambdaf\,\begin{bmatrix}
 1 & \frac{1-\xi}{\alpha } \left(1-\mu^{-1}\right) \\
 -\alpha \, \xi \, \left(1-\mu^{-1}\right) & \mu^{-1}
\end{bmatrix}  \stackrel{\mu \to \infty}{\longrightarrow} 
\lambdaf\,\begin{bmatrix}
 1 & \frac{1-\xi}{\alpha } \\
 -\alpha \, \xi & 0
\end{bmatrix}.
\end{split}
\end{equation*}
The eigenvalues of the limiting matrices are $\{\xi,1-\xi\}$ times $\lambdas$ or $\lambdaf$, respectively.
\end{remark}
\fi

\begin{remark}[Scale considerations]
In order to have the slow and fast contributions to $\ys\,'$ of similar magnitude, and the contributions to $\yf\,'$ of similar magnitude as well, one needs a coupling coefficient inversely proportional to the scale ratio of the system, $\xi \sim (|\lambdaf/\lambdas| + 1)^{-1}$.

\ifreport
Specifically, in order to have a slow evolution of $\ys$ we require that the impacts of the fast and slow terms on $\dot{y}^{\{\s\}}$ are of similar magnitude:
\[
| \alpha | \cdot |\xi | \cdot \big| \lambdaf-\lambdas\big| \sim |\lambdas| \quad \Rightarrow \quad
| \alpha |\cdot |\xi | \cdot \left| \mu-1\right| \sim 1,
\]
where $\mu = \lambdaf/\lambdas$. Similarly, we also require that the impacts of the fast and slow terms on $\dot{y}^{\{\f\}}$ are of similar magnitude:
\[
 | (1-\xi)/\alpha | \cdot \big| \lambdaf-\lambdas \big | \sim | \lambdaf |  \quad \Rightarrow \quad
 | (1-\xi)/\alpha | \cdot \left| \mu- 1 \right | \sim | \mu |. 
\]
Eliminating $\alpha$ from the two relations we obtain:
\[
| (1-\xi)/\xi | \sim |\mu| \quad \Rightarrow \quad \xi \sim \frac{1}{|\mu| + 1}.
\]
\fi
\end{remark}

The general system \eqref{eqn:ode2d-linear} is complex, and in order to ease the analysis process some simplifications are needed. First, the $\alpha$ factor represents a scaling of the fast variable, as can be seen from rewriting the system \eqref{eqn:ode2d-linear} in terms of the variables $[ \alpha^{-1}\,\yf, \ys ]$.
\ifreport
Specifically, the system \eqref{eqn:ode2d-linear} can be written as:
\begin{equation}
\label{eqn:ode2d-linear-scaled}
\renewcommand{\arraystretch}{1.3}
\begin{bmatrix}
\alpha^{-1}\,\dot{y}^{\{\f\}} \\
\dot{y}^{\{\s\}}
\end{bmatrix}
=
\underbrace{
\begin{bmatrix}
 \lambdaf & (1-\xi) \left(\lambdaf-\lambdas\right) \\
 - \xi  \left(\lambdaf-\lambdas\right) & \lambdas
\end{bmatrix}
}_{\overline{\boldsymbol{\Omega}}}
\,
\begin{bmatrix} \alpha^{-1}\,\yf \\ \ys  \end{bmatrix}.
\end{equation}
\fi
To simplify the analysis we take $\alpha=1$, i.e., we assume that the multirate method is applied to the system \eqref{eqn:ode2d-linear} after rescaling the fast variables. It is possible, however, that the stability of some multirate schemes is affected by the scaling, in the sense that the same scheme, applied to systems  with different scalings, has different stability properties. 

In order to further simplify analysis we restrict ourselves to considering the case where $\xi \in \Re$. We see from the eigenvalue analysis that, in order to have a stable coupled system $\boldsymbol{\Omega}$ for any $\lambdaf, \lambdas \in \mathbb{C}^-$, one needs to restrict $\xi \in [0,1]$. 

%

Let $\zf=H\lambdaf$, $\zs= H \lambdas$, $\ws=H\etas$, and $\wf=H\etaf$.
The scheme \eqref{eqn:MIS-component} applied to the test problem \eqref{eqn:ode2d-linear} computes the stages \eqref{eqn:MIS-component-stages} as follows:
\ifreport
\begin{equation*}
 \begin{cases}
v^{\{\f\}}_i(0) = \Yf_{i}, \\
v^{\{\f\}}_i\,' = \deltac_{i}  \,  \Big( \lambdaf\,v^{\{\f\}}_i + \etas\, \Ys_{i} \\
\qquad +  \etas\,\sum_{j=1}^{i}  \widetilde{\gamma}_{i,j}\left({\scriptstyle \frac{\theta}{H}}\right) \, \bigl( \wf\,\Yf_j + \zs\,\Ys_j \bigr) \Big) 
\quad  \textnormal{for  } \theta \in  [0,  H],  \\
\Yf_{i+1} = v^{\{\f\}}_i(H), \\  
\Ys_{i+1} = \Ys_{i}+\sum_{j=1}^{i + 1}  \overline{\gamma}_{i,j}\, \bigl(\wf\, \Yf_j + \zs \,\Ys_j \bigr), \qquad i=1,\dots,s^{\{\s\}}.
\end{cases}
\end{equation*}
The analytical solution of the linear stage ODE is as follows:
\begin{equation*}
\begin{split}
%
%
%
\Yf_{i+1} &= \varphi_0\bigl(\Delta  c^{\{\s\}}_{i}\,  \zf\bigr)\, \Yf_{i}  + \Delta  c^{\{\s\}}_{i}\,\varphi_1\bigl(\Delta  c^{\{\s\}}_{i}\,  \zf\bigr)\,  \ws\, \Ys_{i} \\
& \qquad +\ws\, \Delta  c^{\{\s\}}_{i}\,\sum_{j=1}^{i}  \nu_{i,j}(\zf) \, \bigl( \wf\,\Yf_j + \zs\,\Ys_j \bigr), \\
\Ys_{i+1} &=\Ys_{i}+\sum_{j=1}^{i+1}  \overline{\gamma}_{i,j} \, \bigl( \wf\,\Yf_j + \zs\,\Ys_j \bigr).
\end{split}
\end{equation*}
The $\nu$ coefficients are defined below.
\begin{remark}[Equal abscissae]
For stages where $\Delta  c^{\{\s\}}_{i}=0$ we have:
\begin{equation*}
\Yf_{i+1} = \Yf_{i}, \qquad
\Ys_{i+1} =\Ys_{i}+\sum_{j=1}^{i+1}  \overline{\gamma}_{i,j} \, \bigl( \wf\,\Yf_j + \zs\,\Ys_j \bigr),
\end{equation*}
and the corresponding slow stage may be implicit.
\end{remark}
The general stage evolution equations read:
\fi
\begin{equation*}
\begin{split}
\Yf_{i+1} &=  \varphi_0\bigl(\Delta  c^{\{\s\}}_{i}\,  \zf\bigr)\, \Yf_{i}  +\ws\,\wf\, \Delta  c^{\{\s\}}_{i}\,\sum_{j=1}^{i}  \nu_{i,j}(\zf) \, \Yf_j \\
& \quad + \Delta  c^{\{\s\}}_{i}\,\varphi_1\bigl(\Delta  c^{\{\s\}}_{i}\,  \zf\bigr)\,  \ws\, \Ys_{i}
 +\ws\, \zs\, \Delta  c^{\{\s\}}_{i}\,\sum_{j=1}^{i}  \nu_{i,j}(\zf) \,\Ys_j,  \\
\Ys_{i+1} 
&=\left( 1- \overline{\gamma}_{i,i+1}\,\zs \right)^{-1}\, \left(\Ys_{i} +  \wf\, \sum_{j=1}^{i+1}  \overline{\gamma}_{i,j} \,\Yf_j
+ \zs\,\sum_{j=1}^{i}  \overline{\gamma}_{i,j} \, \Ys_j \right).
\end{split}
\end{equation*}
The $\nu$ coefficients are defined as:
\begin{equation*}
\begin{split}
\ifreport
\nu_{i,j}(\zf) &\coloneqq \frac{1}{H}\,\int_0^H  e^{\Delta  c^{\{\s\}}_{i} \, \zf\, (1-\frac{\theta}{H})}\,\widetilde{\gamma}_{i,j}({\scriptstyle \frac{\theta}{H}})\, d\theta  \\
&=\int_0^1  e^{\Delta  c^{\{\s\}}_{i} \, \zf\, (1-\tau)}\,\widetilde{\gamma}_{i,j}(\tau) d\tau \\
&=  \sum_{k \ge 0} \gamma_{i,j}^k\, \int_0^1  e^{\Delta  c^{\{\s\}}_{i} \, \zf\, (1-\tau)}\,\frac{\tau^{k+1}}{k+1}\, d\tau \\
\fi
\nu_{i,j}(\zf) &=  \sum_{k \ge 0} \frac{\gamma_{i,j}^k}{k+1}\,\varphi_{k+2}\big( \Delta  c^{\{\s\}}_{i} \, \zf \big).
\end{split}
\end{equation*}
Consequently, the MRI-GARK solution advances over one step $H$ via the recurrence:
\[
\renewcommand{\arraystretch}{1.5}
\begin{bmatrix} \yf_{n+1} \\ \ys_{n+1} \end{bmatrix}
 = \mathbf{M}(\zf,\zs,\ws,\wf)\,\begin{bmatrix} \yf_{n} \\ \ys_{n} \end{bmatrix}. 
\]
\begin{definition}[Matrix stability]
The slow stability region ensures that the spectral radius of the error propagation matrix $\mathbf{M}$ has to be smaller than or equal to one:
\begin{equation}
\label{eqn:matrix-slow-stability-region}
\begin{split}
\mathcal{S}^{\textsc{2d}}_{\rho,\alpha} = \big\{ \zs \in \mathbb{C} \mid & \max | \operatorname{eig}\,\mathbf{M}(\zf,\zs) | \le 1, \\
&~ \forall~ \zf \in \mathbb{C}^- ~ \colon~  |\zf| \le \rho,~
|\arg(\zf) - \pi| \le \alpha
\big\}.
\end{split}
\end{equation}
\end{definition}

\begin{example}[Stability matrices]
The stability matrix of the explicit midpoint MRI-GARK scheme of \eqref{eqn:explicit-midpoint} is:
\begin{equation*}
\renewcommand{\arraystretch}{1.5}
\begin{split}
\mathbf{M}^\textsc{emidp} &= \left[
\begin{array}{cc}
\scriptstyle 
\varphi _0^2+\frac{3w}{4} \varphi _2 \varphi _0+\frac{w}{8}  \left(2 \varphi _1+\varphi _2 (2 \zf+w \varphi _2-2)\right) & 
\scriptstyle \mathbf{M}^\textsc{emidp}_{1,2}  \\
\scriptstyle -\frac{\alpha \, \xi \, ( \zf-\zs ) }{4}  (2 \zs+4 \varphi _0+w \varphi _2) &
\scriptstyle \frac{1}{2}\,\zs\,^2+\zs+\frac{w}{4}  (2 \varphi _1+\zf \varphi _2)+1
\end{array} 
\right], \\
\scriptstyle \mathbf{M}^\textsc{emidp}_{1,2}  & = \scriptstyle
\frac{1-\xi}{8\,\alpha}\, \bigl(\zf-\zs\bigr) \left(2 \varphi _1 (\zs+2 \varphi _0+w \varphi _2+2)+\zf \varphi _2 \bigl(2 (\zs+\varphi _0+1)+w \varphi _2\bigr)\right), 
\end{split}
\end{equation*}
%
%
where $w \coloneqq  -\xi \, (1-\xi)\,(\zf-\zs)^2$ and all the $\varphi$ functions are evaluated at $\zf/2$. 
\ifreport
We note that coefficients contain polynomial terms in the fast variable $\zf$ as well as in the coupling variable $\xi$. We expect that these terms will impact stability when the fast system is stiff, and when the two subsystems are tightly coupled.
\fi
The stability matrix of the implicit trapezoidal MRI-GARK scheme \eqref{eqn:implicit-trapezoidal} is:
\[
\renewcommand{\arraystretch}{1.3}
\mathbf{M}^\textsc{itrap} = 
\left[
\begin{array}{cc}
\scriptstyle \varphi_0 + w\, \varphi_2 &\scriptstyle   \frac{1-\xi}{\alpha } \left(\zf-\zs\right)\, \left(\varphi_1+\zf\, \varphi_2\right) \\
 -\frac{\alpha \, \xi \, \left(\zf-\zs\right)}{2-\zs} & \frac{2+\zs}{2-\zs}
\end{array}
\right],
\]
where all the $\varphi$ functions are evaluated at $\zf$. We note that the spectra of $\mathbf{M}^\textsc{emidp}$ and $\mathbf{M}^\textsc{itrap}$ are independent of the scaling factor $\alpha$.
\end{example}

%


\section{Implementation aspects}
\label{sec:implementation}

\subsection{Numerical integration of the fast subsystem}

In a practical application of MRI-GARK the internal stage ODEs \eqref{eqn:MIS-additive-internal-ode} are solved numerically. The MRI-GARK formulation allows for extreme flexibility in the choice of the fast numerical method and the sequence of fast sub-steps. 

Assume the MRI-GARK method has order $p$, and that each stage ODE \eqref{eqn:MIS-additive-internal-ode} is solved with a time discretization of order $q$. If each stage uses at most $M$ substeps (with $M$ a moderate number) then the choice $q=p-1$ preserves the overall order of the scheme. These order considerations, however, are somewhat beside the point since multirating is designed to help with the large values of the fast error constants (appearing implicitly in the order definition). The fast subsystems can be integrated with any method and any sequence of steps that leads to sufficiently accurate results.

\subsection{MRI-GARK and exponential integration}
\label{sec:exponential}
%
In some special cases the integration of the internal stage ODEs \eqref{eqn:MIS-additive-internal-ode} can be carried out analytically.
\ifreport
\begin{lemma}[Simple integration formulas]
\label{lem:linear-integration}
The linear scalar ODE:
\[
\mathbf{v}'(t) = \alpha\,\mathbf{L}\, \mathbf{v}(t) + \sum_{k \ge 0} \gamma^k\,\left(\frac{
t}{H}\right)^k\,\mathbf{u}_k, \quad \mathbf{v}(0) = \mathbf{v}_0,
\]
with $\mathbf{v}(t),\mathbf{u}_k \in \Re^n$, has the following analytical solution:
\[
v(H) = \varphi_0(\alpha\,H\mathbf{L})\, \mathbf{v}_0 + H\ \sum_{k \ge 0} \gamma^k\,\varphi_{k+1}(\alpha\,H\mathbf{L})\,\mathbf{u}_k.
\]
\end{lemma}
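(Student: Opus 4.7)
The plan is to apply the variation of constants (Duhamel) formula to the linear inhomogeneous ODE, and then recognize that each of the resulting integrals matches the definition of a $\varphi_{k+1}$ function from \eqref{eqn:phi-functions}.

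First I would write the solution of $\mathbf{v}' = \alpha\mathbf{L}\mathbf{v} + \mathbf{g}(t)$ with $\mathbf{v}(0)=\mathbf{v}_0$ in its standard form
\[
\mathbf{v}(H) = e^{\alpha H \mathbf{L}}\mathbf{v}_0 + \int_{0}^{H} e^{\alpha(H-s)\mathbf{L}}\,\mathbf{g}(s)\,ds,
\]
which is valid for a linear ODE with a matrix coefficient since $\alpha\mathbf{L}$ commutes with itself. Substituting $\mathbf{g}(s) = \sum_{k\ge 0}\gamma^k (s/H)^k \mathbf{u}_k$ and pulling the $\mathbf{u}_k$ vectors out of the integral (legitimate since each integrand is a scalar times a fixed vector) gives
\[
\mathbf{v}(H) = e^{\alpha H \mathbf{L}}\mathbf{v}_0 + \sum_{k\ge 0}\gamma^k\left(\int_0^H e^{\alpha(H-s)\mathbf{L}}\left(\tfrac{s}{H}\right)^k ds\right)\mathbf{u}_k.
\]

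Next I would evaluate the integrals by the change of variable $\tau = s/H$, $ds = H\,d\tau$, obtaining
\[
\int_0^H e^{\alpha(H-s)\mathbf{L}}\left(\tfrac{s}{H}\right)^k ds = H\int_0^1 e^{\alpha H \mathbf{L}(1-\tau)}\,\tau^{k}\,d\tau.
\]
Comparing with the definitions \eqref{eqn:phi-functions}, namely $\varphi_{0}(z) = e^{z}$ and $\varphi_{k}(z) = \int_0^1 e^{z(1-\tau)}\tau^{k-1}d\tau$, the first term matches $\varphi_0(\alpha H\mathbf{L})\mathbf{v}_0$, and each integral in the sum equals $H\,\varphi_{k+1}(\alpha H\mathbf{L})$. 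Assembling these identifications yields the stated formula.

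There is no real obstacle here; the one point worth mentioning is a justification issue rather than a computational one, namely that the $\varphi$ functions were introduced in \eqref{eqn:phi-functions} as scalar analytic functions, whereas we are using them here with the matrix argument $\alpha H\mathbf{L}$. This is standard and can be handled either by noting that the definition $\varphi_k(z)=\int_0^1 e^{z(1-\tau)}\tau^{k-1}d\tau$ extends directly to matrix arguments via the matrix exponential in the integrand, or by invoking the holomorphic functional calculus. A one-line remark to this effect would close the argument, and exchanging the finite (or uniformly convergent, in the series case) sum with the integral presents no difficulty under the mild assumption that $\sum_k \gamma^k \tau^k \mathbf{u}_k$ converges uniformly on $[0,1]$, which holds automatically in the MRI-GARK context where the series is a polynomial.
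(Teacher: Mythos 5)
Your proof is correct and follows exactly the route the paper relies on: the paper states this lemma without proof, but performs the identical variation-of-constants computation (Duhamel formula, substitution $\tau = \theta/H$, identification with $\varphi_{k+1}$ via $\varphi_{k+1}(z)=\int_0^1 e^{z(1-t)}t^k\,dt$) explicitly in the scalar stability analysis. Your remark on extending the $\varphi_k$ definitions to matrix arguments is a reasonable point of care, but nothing more is needed.
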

\fi
Consider the following semi-linear additive system partitioning \eqref{eqn:multirate-additive-ode}:
\begin{equation} 
	\label{eqn:multirate-exp-additive-ode}
	 y'= f(y) = \underbrace{\mathbf{L}\,y}_{\funf (y)} +  \underbrace{f(y)-\mathbf{L}\,y}_{\funs (y) \equiv g(y)}, \qquad y(t_0)=y_0.
\end{equation}
An MRI-GARK scheme \eqref{eqn:MIS-additive} with $\gamma_{i,i+1}(\tau)\equiv 0$ applied to \eqref{eqn:multirate-exp-additive-ode} advances the solution as follows:
\ifreport
\begin{subequations}
\label{eqn:MIS-exp-additive}
\begin{eqnarray}
\label{eqn:MIS-exp-additive-first-stage}
\qquad&& \Ys_{1} = y_{n}, \\
\label{eqn:MIS-exp-additive-internal-ode}
&& \begin{cases}
v(0) = \Ys_{i}, \\
v' = \Delta c^{\{\s\}}_i  \, \mathbf{L}\, v + \sum_{k\ge 0} \sum_{j=1}^{i} \gamma_{i,j}^k\,\left({\scriptstyle \frac{\theta}{H}}\right)^k \, g\bigl(\,Y^{\{\s\}}_j \, \bigr),\quad  \textnormal{for  } \theta \in  [0,  H],  \\
\Ys_{i+1} = v(H), 
\end{cases}
\\
\label{eqn:MIS-exp-additive-solution}
&& y_{n+1} = \Ys_{s^{\{\s\}}+1}\,.
\end{eqnarray}
\end{subequations}
Using Lemma \ref{lem:linear-integration} the method \eqref{eqn:MIS-exp-additive} is represented as follows:
\fi
\begin{equation}
\label{eqn:MIS-exp}
\begin{split}
\Ys_{1} &= y_{n}, \\
\Ys_{i+1} &= \varphi_0(\Delta c^{\{\s\}}_i\,H\mathbf{L})\, \Ys_{i}  + H\,\sum_{j=1}^{i} \, \theta_{i,j}(H\mathbf{L})\, g\bigl(\,Y^{\{\s\}}_j \, \bigr), \\
&=  \Ys_{i} 
+ \Delta \mathfrak{c}^{\{\s\}}_{i}\,H\,{\varphi_{1}\big( \Delta \mathfrak{c}^{\{\s\}}_{i}\,H\mathbf{L}\big)}\,f(\Ys_{i}) 
+ H\,\sum_{j=1}^{i-1} {\theta_{i,j}\big(H \mathbf{L}\big)}  \,g\bigl( \Ys_{j} \bigr) \\
&\qquad + H\, \overline{\theta}_{i,i} \big(H \mathbf{L}\big) \, g(\Ys_{i}),\qquad i=1, \dots, s^{\{\s\}},  \\
\\
y_{n+1} &= \Ys_{s^{\{\s\}}+1}\,,
\end{split}
\end{equation}
where
\begin{eqnarray*}
\theta_{i,j}\big(H \mathbf{L}\big) &:=& \sum_{k \ge 0}\gamma_{i,j}^k\,\varphi_{k+1}\big( \Delta \mathfrak{c}^{\{\s\}}_{i}\,H\mathbf{L}\big), \\
\overline{\theta}_{i,i}\big(H \mathbf{L}\big) &:= & \theta_{i,i}\big(H \mathbf{L}\big) 
-  \Delta \mathfrak{c}^{\{\s\}}_{i} \,  \varphi_{1}\big( \Delta \mathfrak{c}^{\{\s\}}_{i}\,H\mathbf{L}\big).
\end{eqnarray*}
Then MRI-GARK scheme applied to a semi-linear system \eqref{eqn:multirate-exp-additive-ode} gives rise to \eqref{eqn:MIS-exp}, which is an exponential Runge-Kutta method \cite{Hochbruck_2010_exp}. Therefore the MRI-GARK framework can be regarded as a generalization of exponential integrators, where one solves exactly one component component of the system \cite{Knoth_private} that can be not only {\it linear}, but also  {\it nonlinear}. 

\begin{example}[Reaction-diffusion system]
Consider a reaction-diffusion system \eqref{eqn:multirate-exp-additive-ode} with linear diffusion $\mathbf{L}y$ and nonlinear reaction term $g(y)$. An exponential method applied with the partitioning \eqref{eqn:multirate-exp-additive-ode} treats diffusion as the fast system. However, in some applications reactions are faster than diffusion. In this case, in the MRI-GARK approach, one can choose to treat the linear diffusion as the slow term, and the nonlinear reaction as the fast term to be integrated exactly. (The exponential method will need to change the partition by linearizing the fast chemistry.)
\end{example}


\section{Practical explicit methods}
\label{sec:explicit-methods}

In the presentation of the methods we denote the Butcher tableau of the base method by $\mathsf{A}^{\{\s\}}$\eqref{eqn:slow-base-scheme}, and the tableaux  of the coupling coefficients by:
\[
\renewcommand{\arraystretch}{\astretch}
\boldsymbol{\Upgamma}^k \coloneqq \left[ \begin{array}{c} \boldsymbol{\Gamma}^k \\ \hline \widehat{\gamma}^k\,^T \end{array} \right],
\quad k \ge 0.
\]
The abscissae of the main and the embedded solutions are written explicitly since they are important in the workings of infinitesimal schemes. The coefficients $\boldsymbol{\Upgamma}^k$ that are not explicitly listed below are equal to zero. For completeness we explicitly provide the base scheme $\mathsf{A}^{\{\s\}}$ for each method, even if the base scheme is completely determined by the $\boldsymbol{\Upgamma}^k$ coefficients.

\subsection{Second order explicit methods}
%
The MRI-GARK-ERK22 family is:
\begin{tiny}
\[
\renewcommand{\arraystretch}{2.5}
\mathsf{A}^{\{\s\}} = \left[
\begin{array}{c | cc}
\scriptstyle   0 &\scriptstyle   0 &\scriptstyle   0 \\
\scriptstyle  c_2 &\scriptstyle   c_2 &\scriptstyle   0 \\
 \hline
\scriptstyle   1 & \frac{2 c_2 - 1}{2 c_2} & \frac{1}{2 c_2} \\
 \hline
\scriptstyle  1 &\scriptstyle   1 &\scriptstyle   0
\end{array}
\right],
\qquad
\boldsymbol{\Upgamma}^0 = \left[
\begin{array}{cc}
\scriptstyle  c_2 &\scriptstyle   0 \\
{\scriptstyle -}\frac{2 c_2^2 - 2 c_2 + 1}{2 c_2} & \frac{1}{2 c_2} \\
 \hline
\scriptstyle   1-c_2 &\scriptstyle   0 
\end{array}
\right].
\]
\end{tiny}
For $c_2=1/2$ this is the explicit midpoint rule MRI-GARK-ERK22a \eqref{eqn:explicit-midpoint}, and for $c_2=1$ the explicit trapezoidal method MRI-GARK-ERK22b.

The scalar slow stability region $\mathcal{S}^{\textsc{1d}}_{\infty,\alpha}$  \eqref{eqn:scalar-slow-stability-region} is shown in Figure  \ref{fig:MRI-explicit-stability_1D},  and the matrix stability region $\mathcal{S}^{\textsc{2d}}_{\rho,\alpha}$  \eqref{eqn:matrix-slow-stability-region}  in Figure  \ref{fig:MRI-explicit-stability_2D}.

\subsection{An order three explicit method with three stages and equidistant abscissae}
The MRI-GARK-ERK33 family of methods has the following base slow scheme and coupling coefficients:
\begin{tiny}
\begin{equation*}
\renewcommand{\arraystretch}{2}
\mathsf{A}^{\{\s\}} = 
\left[ \begin{array}{c | ccc}
\scriptstyle 0 &\scriptstyle 0 &\scriptstyle 0 &\scriptstyle 0 \\
\frac{1}{3} & \frac{1}{3} &\scriptstyle 0 &\scriptstyle 0 \\
\frac{2}{3} &\scriptstyle 0 & \frac{2}{3} &\scriptstyle 0 \\
\hline
\scriptstyle 1 &  \frac{1}{4} &\scriptstyle 0 & \frac{3}{4} \\
\hline
\scriptstyle 1 &  \frac{1}{12} & \frac{1}{3} & \frac{7}{12}
\end{array} \right],
\quad
\boldsymbol{\Upgamma}^0  = 
\left[ \begin{array}{ccc}
 \frac{1}{3} & \scriptstyle 0 & \scriptstyle 0 \\
 \frac{-6 \delta-7}{12}  & \frac{6 \delta+11}{12}  &\scriptstyle  0 \\
 \scriptstyle 0 & \frac{6 \delta-5}{12} & \frac{3-2 \delta}{4} \\
 \hline
 \frac{1}{12} & {\scriptstyle -}\frac{1}{3} & \frac{7}{12}
\end{array}\right],
\qquad
\boldsymbol{\Upgamma}^1 = 
\left[ \begin{array}{ccc}
\scriptstyle 0 &\scriptstyle 0 &\scriptstyle 0 \\
\frac{2\delta+1}{2} & {\scriptstyle -}\frac{2\delta+1}{2} &\scriptstyle  0 \\
 \frac{1}{2} & {\scriptstyle -}\frac{2\delta+1}{2} &\scriptstyle \delta \\
 \hline
 \scriptstyle 0 &\scriptstyle 0 &\scriptstyle 0
\end{array}
\right].
\end{equation*}
\end{tiny}
The particular scheme MRI-GARK-ERK33a tested here corresponds to the choice $\delta=-1/2$.
The scalar slow stability region $\mathcal{S}^{\textsc{1d}}_{\infty,\alpha}$  \eqref{eqn:scalar-slow-stability-region}  is shown in Figure  \ref{fig:MRI-explicit-stability_1D},  and the matrix stability region $\mathcal{S}^{\textsc{2d}}_{\rho,\alpha}$  \eqref{eqn:matrix-slow-stability-region}  in Figure  \ref{fig:MRI-explicit-stability_2D}.

\ifreport
%
%
%
MRI-GARK-ERK33a computes the solution \eqref{eqn:MIS-additive-internal-ode} as follows:
\begin{eqnarray*}
&& \Ys_{1} = y_{n}, \\
&& \begin{cases}
v(0) = \Ys_{1}, \\
 v' = \frac{1}{3} \, \funf \left( v \right) + \frac{1}{3}  \, \funs\bigl( Y^{\{\s\}}_1 \bigr), \quad \theta \in  [0,  H],  \\
\Ys_{2} = v(H), 
\end{cases} \\
&& \begin{cases}
v(0) = \Ys_{2}, \\
 v' = \frac{1}{3} \, \funf \left( v \right)  {\scriptstyle -}\frac{1}{3}  \, \funs\bigl( Y^{\{\s\}}_1 \bigr)  + \frac{2}{3}  \, \funs\bigl( Y^{\{\s\}}_2 \bigr) , \quad \theta \in  [0,  H],  \\
\Ys_{3} = v(H), 
\end{cases} \\
&& \begin{cases}
v(0) = \Ys_{3}, \\
 v' = \frac{1}{3} \, \funf \left( v \right) + \left( \frac{1}{2}\,\frac{\theta}{H} \right) \, \funs\bigl( Y^{\{\s\}}_1 \bigr)  {\scriptstyle -}\frac{2}{3}   \, \funs\bigl( Y^{\{\s\}}_2 \bigr) \\
\qquad + \left( 1 {\scriptstyle -}\frac{1}{2}\,\frac{\theta}{H} \right) \, \funs\bigl( Y^{\{\s\}}_3 \bigr), \quad \theta \in  [0,  H],  \\
y_{n+1} = v(H), 
\end{cases} \\
&& \begin{cases}
v(0) = \Ys_{3}, \\
 v' = \frac{1}{3} \, \funf \left( v \right) + \frac{1}{12}\, \funs\bigl( Y^{\{\s\}}_1 \bigr)  {\scriptstyle -}\frac{1}{3}   \, \funs\bigl( Y^{\{\s\}}_2 \bigr)
+\frac{7}{12}\,\funs\bigl( Y^{\{\s\}}_3 \bigr), \quad \theta \in  [0,  H],  \\
\widehat{y}_{n+1} = v(H).
\end{cases}
\end{eqnarray*}
\fi

\ifreport
\subsection{An order three explicit method with three stages and repeated abscissae}

The MRI-GARK-ERK33b scheme has the following coefficients:
\begin{tiny}
\begin{equation*}
\renewcommand{\arraystretch}{2}
\mathsf{A}^{\{\s\}} = \left[
\begin{array}{c|cccc}
\scriptstyle 0&\hphantom{-}\scriptstyle  0 &\scriptstyle  0 &\scriptstyle  0 \\
\frac{1}{2} & \hphantom{-}\frac{1}{2} &\scriptstyle  0 &\scriptstyle  0 \\
\scriptstyle 1&\scriptstyle  -1 &\scriptstyle  2 &\scriptstyle  0 \\
 \hline
\scriptstyle  1&\hphantom{-} \frac{1}{6} &
 \frac{2}{3} &
 \frac{1}{6} \\
\hline
\scriptstyle  1&\hphantom{-} \frac{1}{4} &
 \frac{1}{2} &
 \frac{1}{4} 
\end{array} \right],
\quad
\boldsymbol{\Upgamma}^0 = \left[
\begin{array}{cccc}
 \frac{1}{2} &\hphantom{-}\scriptstyle  0 &\scriptstyle  0 \\
 \frac{1}{2} &\hphantom{-}\scriptstyle  0 &\scriptstyle  0 \\
 \frac{7}{6} & {\scriptstyle -}\frac{4}{3} & \frac{1}{6} \\ 
 \hline
 \frac{5}{4} & {\scriptstyle -}\frac{3}{2} & \frac{1}{4} 
\end{array}
\right],
\quad
\boldsymbol{\Upgamma}^1 = \left[
\begin{array}{ccccc}
\hphantom{-}\scriptstyle 0 &\scriptstyle  0 &\scriptstyle  0 \\
\scriptstyle  -4 & \scriptstyle 4 &\scriptstyle  0 \\
\hphantom{-}\scriptstyle  0 &\scriptstyle  0 &\scriptstyle  0 \\
\hline
\hphantom{-}\scriptstyle  0 &\scriptstyle  0 &\scriptstyle  0 
\end{array}
\right].
\end{equation*}
\end{tiny}
%
%
\fi

\subsection{An order four explicit method with five stages and equidistant abscissae}

The MRI-GARK-ERK45a method is defined by:
\begin{tiny}
\begin{equation*}
\renewcommand{\arraystretch}{2}
\begin{split}
\mathsf{A}^{\{\s\}}
&=
\left[
\begin{array}{c|ccccc}
\scriptstyle 0&\hphantom{-}\scriptstyle  0 &\scriptstyle  0 &\scriptstyle  0 &\scriptstyle  0 &\scriptstyle  0 \\
\frac{1}{5}& \hphantom{-}\frac{1}{5} &\scriptstyle  0 &\scriptstyle  0 &\scriptstyle  0 &\scriptstyle  0 \\
\frac{2}{5}&  \hphantom{-}\frac{1}{32} & \frac{59}{160} &\scriptstyle  0 &\scriptstyle  0 &\scriptstyle  0 \\
\frac{3}{5}&  {\scriptstyle -}\frac{1}{2} & \frac{171}{64} & {\scriptstyle -}\frac{503}{320} &\scriptstyle  0 &\scriptstyle  0 \\
\frac{4}{5}&  \hphantom{-}\frac{125773}{379760} & {\scriptstyle -}\frac{183399}{379760} & \frac{175277}{189880} & \frac{136}{4747} &\scriptstyle  0 \\
\hline
\scriptstyle 1& \hphantom{-}\frac{1}{32}  &
 \frac{1}{3}  &
 \frac{11}{48}  &
 {\scriptstyle -}\frac{1}{12}  &
 \frac{47}{96} \\
 \hline
\scriptstyle 1 &\hphantom{-}\frac{1}{8}  &
 \frac{6403}{71670}  &
 \frac{28571}{71670}  &
 {\scriptstyle -}\frac{4681}{71670}  &
 \frac{129673}{286680}
\end{array}
\right],
\\
\boldsymbol{\Upgamma}^0 &= \left[
\begin{array}{ccccc}
 \frac{1}{5} &\scriptstyle  0 &\scriptstyle  0 &\scriptstyle  0 &\scriptstyle  0 \\
 {\scriptstyle -} \frac{53}{16} & \frac{281}{80} &\scriptstyle  0 &\scriptstyle  0 &\scriptstyle  0 \\
  {\scriptstyle -}\frac{36562993}{71394880} & \frac{34903117}{17848720} & {\scriptstyle -}\frac{88770499}{71394880} &\scriptstyle  0 &\scriptstyle  0 \\
 {\scriptstyle -}\frac{7631593}{71394880} & {\scriptstyle -}\frac{166232021}{35697440} & \frac{6068517}{1519040} & \frac{8644289}{8924360} &\scriptstyle  0 \\
 \frac{277061}{303808} & {\scriptstyle -}\frac{209323}{1139280} & {\scriptstyle -}\frac{1360217}{1139280} & {\scriptstyle -}\frac{148789}{56964} & \frac{147889}{45120} \\
 \hline
{\scriptstyle -} \frac{88227}{47470}  &   \frac{756870829}{340217490} &  {\scriptstyle -} \frac{713704111}{1360869960} &  {\scriptstyle -} \frac{31967827}{340217490} &   \frac{129673}{286680}
\end{array}
\right],
\\
\boldsymbol{\Upgamma}^1 &= \left[
\begin{array}{ccccc}
\scriptstyle 0 &\scriptstyle  0 &\scriptstyle  0 &\scriptstyle  0 &\scriptstyle  0 \\
 \frac{503}{80} & {\scriptstyle -}\frac{503}{80} &\scriptstyle  0 &\scriptstyle  0 &\scriptstyle  0 \\
 {\scriptstyle -}\frac{1365537}{35697440} & \frac{4963773}{7139488} & {\scriptstyle -}\frac{1465833}{2231090} &\scriptstyle  0 &\scriptstyle  0 \\
 \frac{66974357}{35697440} & \frac{21445367}{7139488} & -\scriptstyle3 & {\scriptstyle -}\frac{8388609}{4462180} &\scriptstyle  0 \\
 {\scriptstyle -}\frac{18227}{7520} &\scriptstyle 2 &\scriptstyle 1 &\scriptstyle 5 & {\scriptstyle -}\frac{41933}{7520} \\
 \hline
 \frac{6213}{1880} & {\scriptstyle -}\frac{6213}{1880} &\scriptstyle  0 &\scriptstyle  0 &\scriptstyle 0
\end{array}
\right].
\end{split}
\end{equation*}
\end{tiny}
The scalar slow stability region $\mathcal{S}^{\textsc{1d}}_{\infty,\alpha}$  \eqref{eqn:scalar-slow-stability-region}  is shown in Figure  \ref{fig:MRI-explicit-stability_2D},  and the matrix stability region $\mathcal{S}^{\textsc{2d}}_{\rho,\alpha}$  \eqref{eqn:matrix-slow-stability-region}  in Figure  \ref{fig:MRI-explicit-stability_2D}.

\ifreport
\subsection{An order four method, five stages, with repeated abscissae, and FSAL}

We now consider a four stages slow base method, with the fifth stage computing the next step - this is the first-same-as-last (FSAL) property. The last stage is only necessary in order to build an embedded error estimator, otherwise the base method needs only four stages. The MRI-GARK-ERK45b method reads:
\begin{tiny}
\[
\renewcommand{\arraystretch}{2}
\mathsf{A}^{\{\s\}} = \left[
\begin{array}{c|ccccc}
\scriptstyle 0 &\scriptstyle  0 &\hphantom{-}\scriptstyle  0 &\scriptstyle 0 &\scriptstyle  0 &\hphantom{-}\scriptstyle 0 \\
\frac{3}{5} &  \frac{3}{5} &\hphantom{-}\scriptstyle  0 &\scriptstyle 0 &\scriptstyle  0 &\hphantom{-}\scriptstyle 0 \\
 \frac{3}{4} & \frac{39}{32} & {\scriptstyle -}\frac{15}{32} &\scriptstyle  0 &\scriptstyle 0 &\hphantom{-}\scriptstyle 0 \\
\scriptstyle 1 &  \frac{23}{27} &\hphantom{-} \frac{20}{27} & {\scriptstyle -}\frac{16}{27} &\scriptstyle  0 &\hphantom{-}\scriptstyle 0 \\
\scriptstyle 1 &  \frac{5}{27} & \hphantom{-}\frac{125}{108} & {\scriptstyle -}\frac{16}{27} & \frac{1}{4} &\hphantom{-}\scriptstyle 0 \\
 \hline
\scriptstyle 1  &  \frac{5}{27} &\hphantom{-} \frac{125}{108} & {\scriptstyle -}\frac{16}{27} & \frac{1}{4} &\hphantom{-}\scriptstyle 0 \\
 \hline
\scriptstyle 1   & \frac{1}{7} &\hphantom{-} \frac{425}{252} & {\scriptstyle -}\frac{80}{63} & \frac{23}{28} & {\scriptstyle -}\frac{8}{21}
\end{array}
\right],
\qquad
\boldsymbol{\Upgamma}^0 =
\left[
\begin{array}{ccccc}
 \hphantom{-}\frac{3}{5} &\scriptstyle  0 &\scriptstyle  0 &\scriptstyle  0 &\hphantom{-}\scriptstyle 0 \\
 {\scriptstyle -}\frac{16201}{2160} & \frac{3305}{432} &\scriptstyle  0 &\scriptstyle 0 &\hphantom{-}\scriptstyle 0 \\
 {\scriptstyle -}\frac{7859}{216} & \frac{4177}{216} & \frac{467}{27} &\scriptstyle  0 &\hphantom{-}\scriptstyle 0 \\
 {\scriptstyle -}\frac{2}{3} & \frac{5}{12} &\scriptstyle  0 & \frac{1}{4} &\hphantom{-}\scriptstyle 0 \\
\hphantom{-}\scriptstyle 0 &\scriptstyle  0 &\scriptstyle 0 &\scriptstyle  0 &\hphantom{-}\scriptstyle 0 \\
 \hline
 {\scriptstyle -}\frac{8}{189} & \frac{100}{189} & {\scriptstyle -}\frac{128}{189} & \frac{4}{7} & {\scriptstyle -}\frac{8}{21}
\end{array}
\right],
\]
\end{tiny}
\begin{tiny}
\[
\renewcommand{\arraystretch}{2}
\boldsymbol{\Upgamma}^1 =
\left[
\begin{array}{ccccc}
\scriptstyle 0 &\scriptstyle  0 &\scriptstyle 0 &\scriptstyle  0 &\scriptstyle 0 \\
 \frac{6205}{144} & {\scriptstyle -}\frac{6205}{144} &\scriptstyle  0 &\scriptstyle 0 &\scriptstyle 0 \\
 \frac{26213}{144} & {\scriptstyle -}\frac{9733}{144} & {\scriptstyle -}\frac{1030}{9} &\scriptstyle  0 &\scriptstyle 0 \\
 \scriptstyle 0 &\scriptstyle  0 &\scriptstyle 0 &\scriptstyle  0 &\scriptstyle 0 \\
\scriptstyle 0 &\scriptstyle  0 &\scriptstyle 0 &\scriptstyle  0 &\scriptstyle 0 \\
 \hline
\scriptstyle 0 &\scriptstyle  0 &\scriptstyle 0 &\scriptstyle  0 &\scriptstyle 0
\end{array}
\right],
\qquad
\boldsymbol{\Upgamma}^2 =
\left[
\begin{array}{ccccc}
\scriptstyle 0 &\scriptstyle  0 &\scriptstyle  0 &\scriptstyle  0 &\scriptstyle 0 \\
 {\scriptstyle -}\frac{725}{18} & \frac{725}{18} &\scriptstyle  0 &\scriptstyle 0 &\scriptstyle 0 \\
\scriptstyle -165 &\scriptstyle 47 &\scriptstyle 118 &\scriptstyle  0 &\scriptstyle 0 \\
\scriptstyle 0 &\scriptstyle  0 &\scriptstyle 0 &\scriptstyle  0 &\scriptstyle 0 \\
\scriptstyle 0 &\scriptstyle  0 &\scriptstyle 0 &\scriptstyle  0 &\scriptstyle 0 \\
 \hline
\scriptstyle 0 &\scriptstyle  0 &\scriptstyle 0 &\scriptstyle  0 &\scriptstyle 0 
\end{array}
\right].
\]
\end{tiny}
%
\fi

\begin{figure}[h]
\centering
\subfigure[ERK22a: $\mathcal{S}^{\textsc{1d}}_{\rho=\infty,\alpha}$]{
\includegraphics[width=0.3\linewidth]{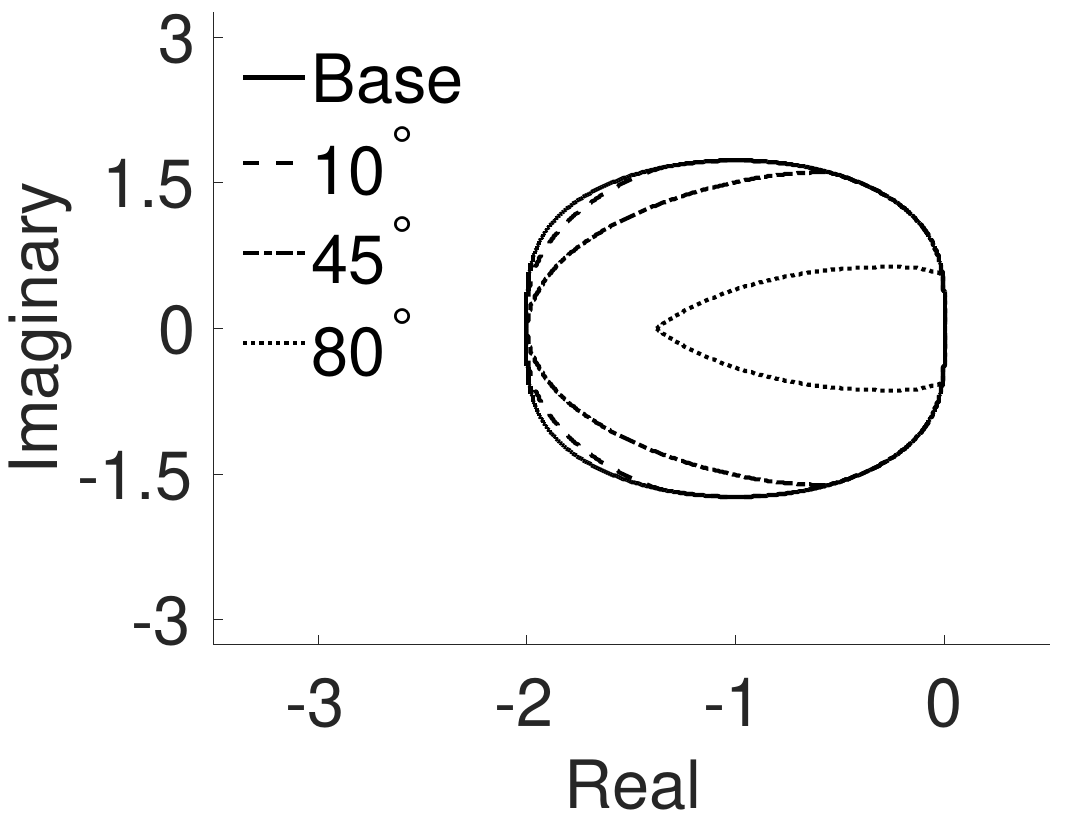}
\label{fig:stab1D-ERK22a}
}
\subfigure[ERK33a: $\mathcal{S}^{\textsc{1d}}_{\rho=\infty,\alpha}$]{
\includegraphics[width=0.3\linewidth]{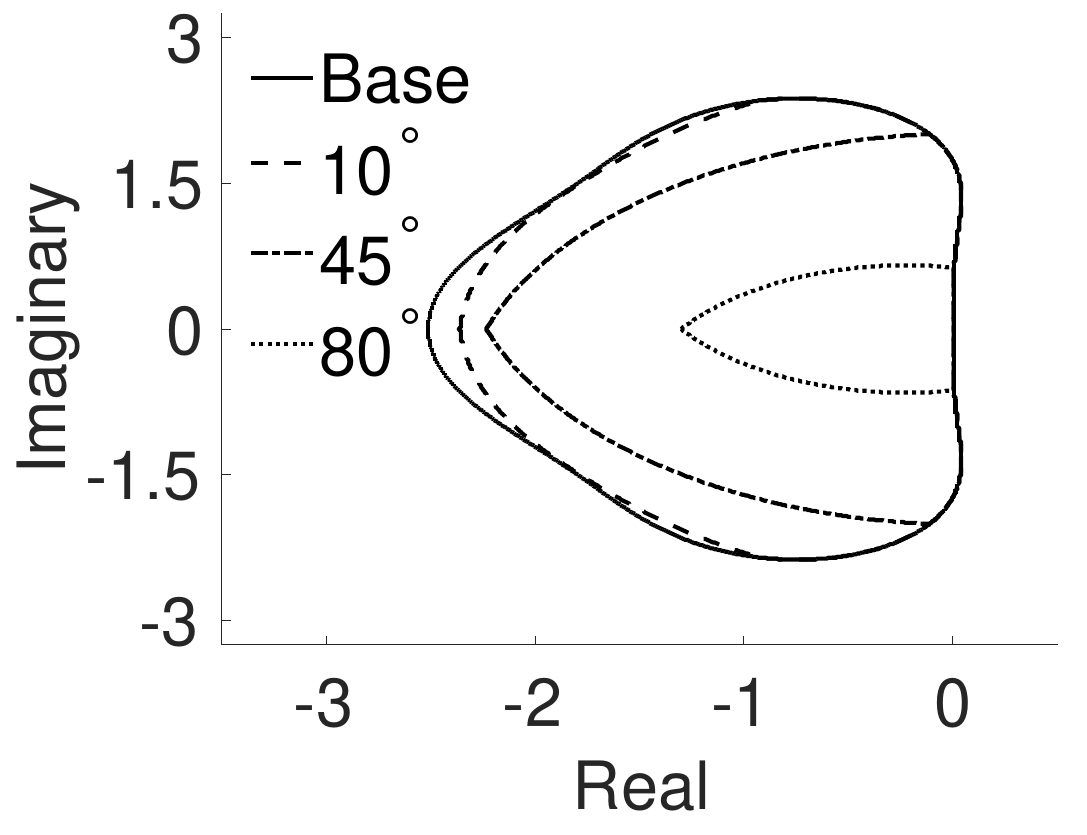}
\label{fig:stab1D-ERK33a}
}
\subfigure[ERK45a: $\mathcal{S}^{\textsc{1d}}_{\rho=\infty,\alpha}$]{
\includegraphics[width=0.3\linewidth]{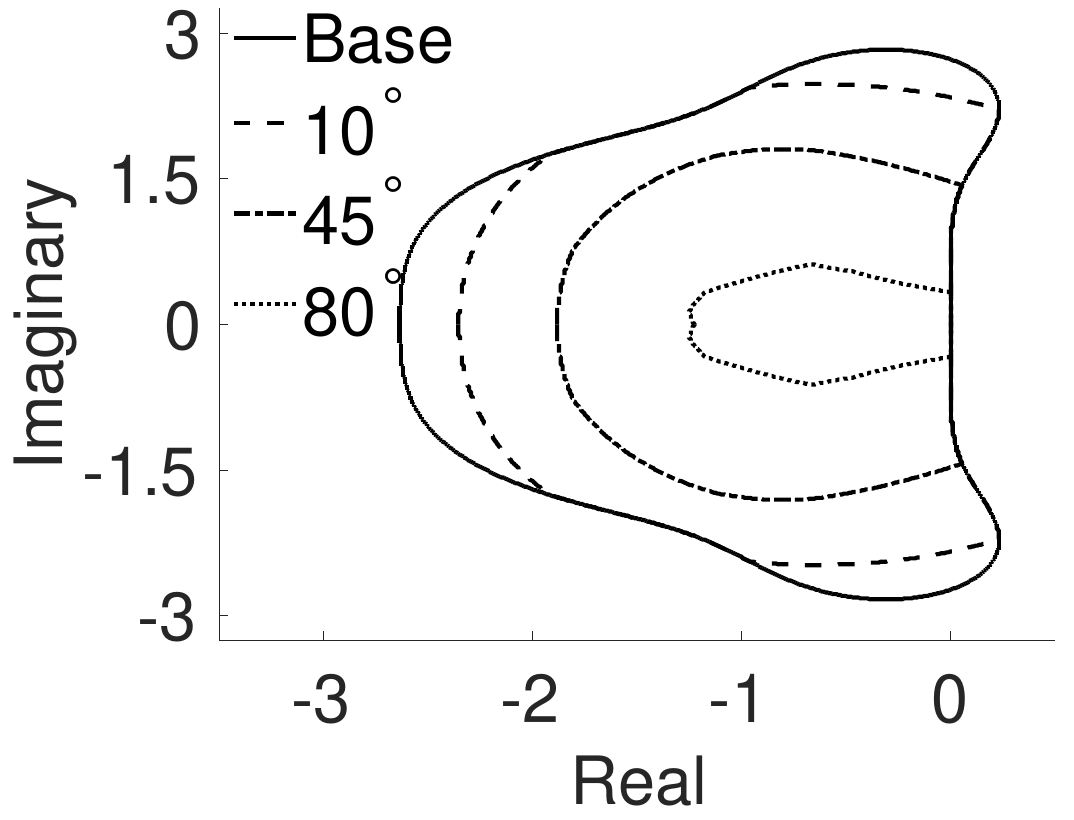}
\label{fig:stab1D-ERK45a}
}
%
\caption{Scalar slow stability regions \eqref{eqn:scalar-slow-stability-region} of explicit MRI-GARK schemes. For $\rho=\infty$ they correspond to $A(\alpha)$ unconditional stability in the fast variable. The stability degrades with increasing $\alpha$.} 
\label{fig:MRI-explicit-stability_1D}
\end{figure}

\begin{figure}[h]
\centering
\ifreport
\subfigure[ERK22a: $\mathcal{S}^{\textsc{2d}}_{\rho=10,\alpha=10^\circ}$]{
\includegraphics[width=0.3\linewidth]{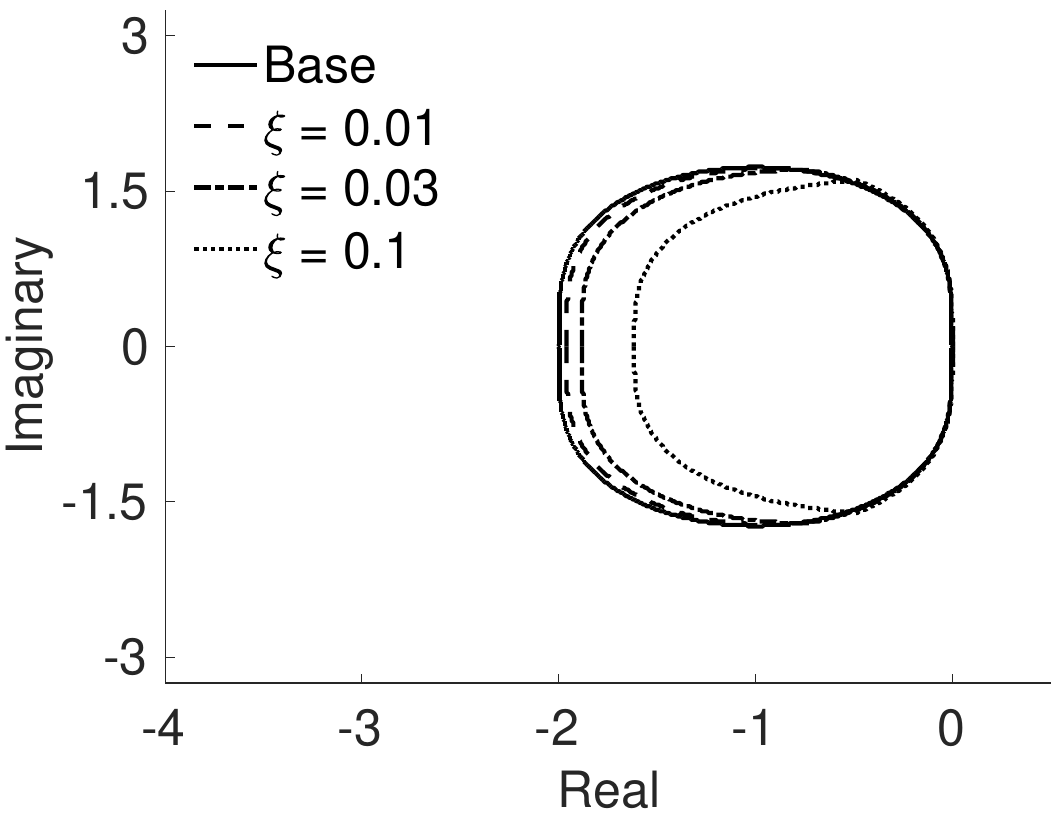}
\label{fig:stab2D-ERK22a-10}
}
\fi
\subfigure[ERK22a: $\mathcal{S}^{\textsc{2d}}_{\rho=10,\alpha=45^\circ}$]{
\includegraphics[width=0.3\linewidth]{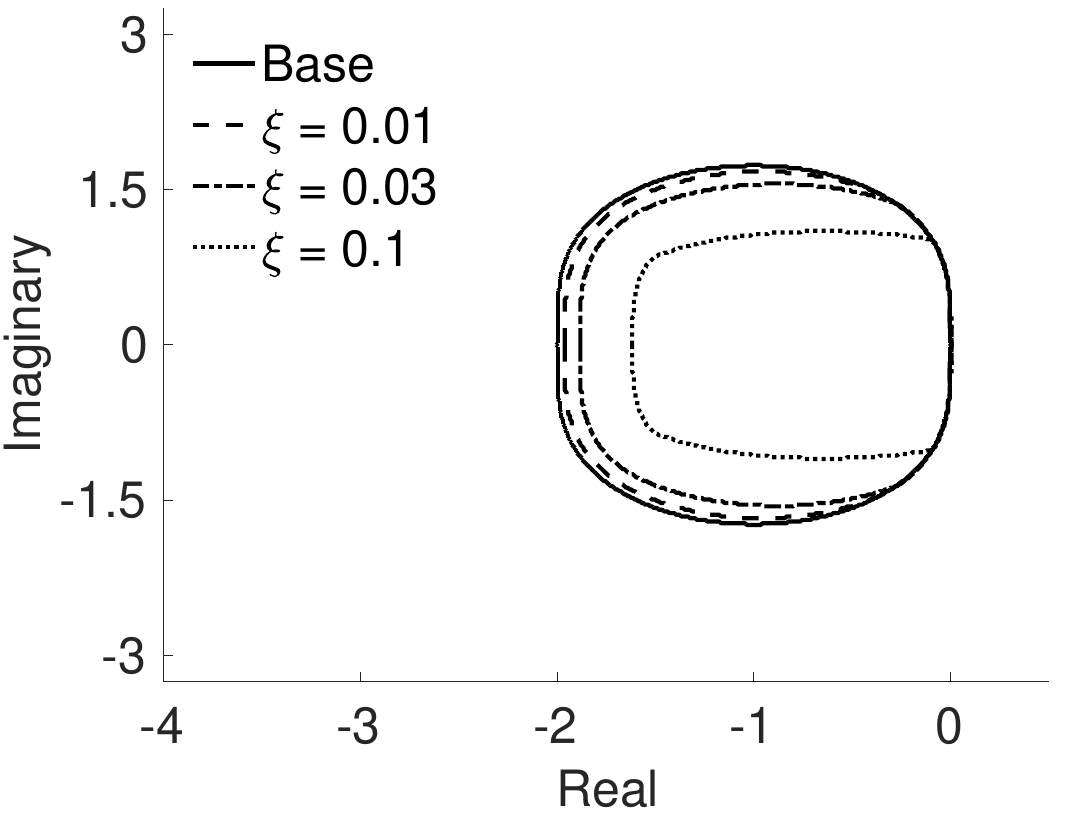}
\label{fig:stab2D-ERK22a-45}
}
\ifreport
\subfigure[ERK22a: $\mathcal{S}^{\textsc{2d}}_{\rho=10,\alpha=80^\circ}$]{
\includegraphics[width=0.3\linewidth]{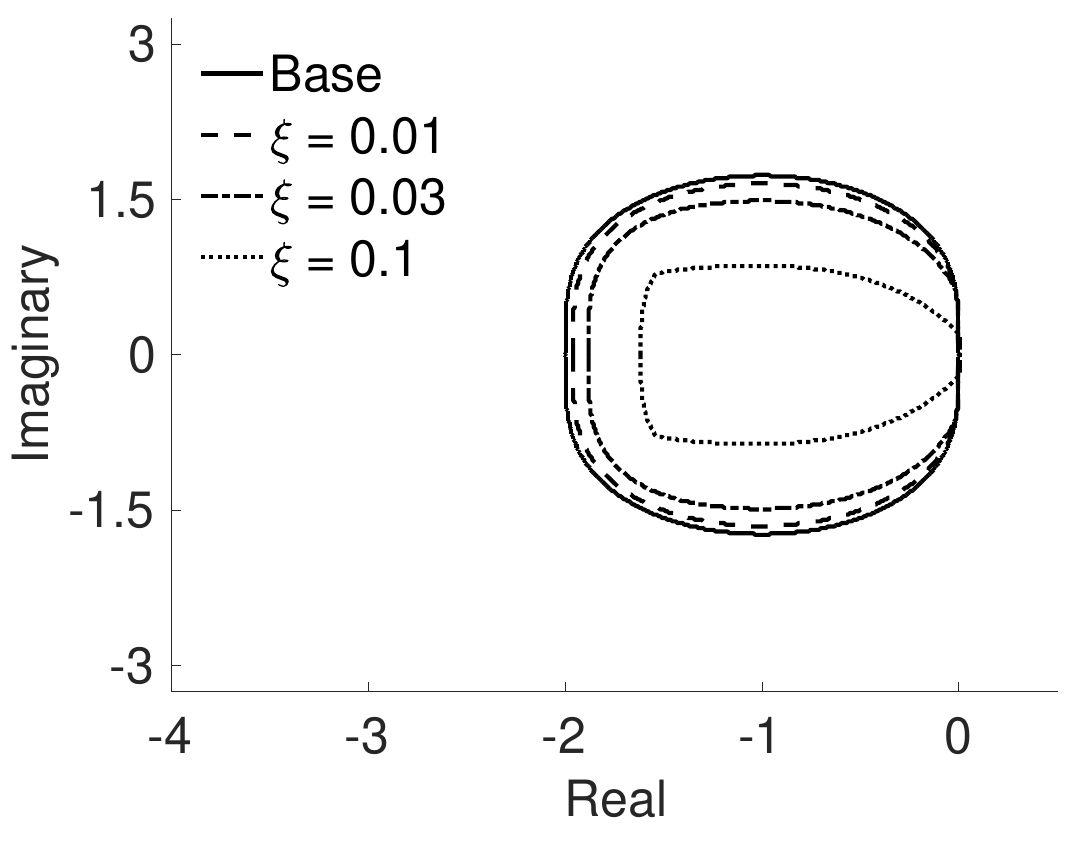}
\label{fig:stab2D-ERK22a-80}
}
\fi
\ifreport
\subfigure[ERK33a: $\mathcal{S}^{\textsc{2d}}_{\rho=10,\alpha=10^\circ}$]{
\includegraphics[width=0.3\linewidth]{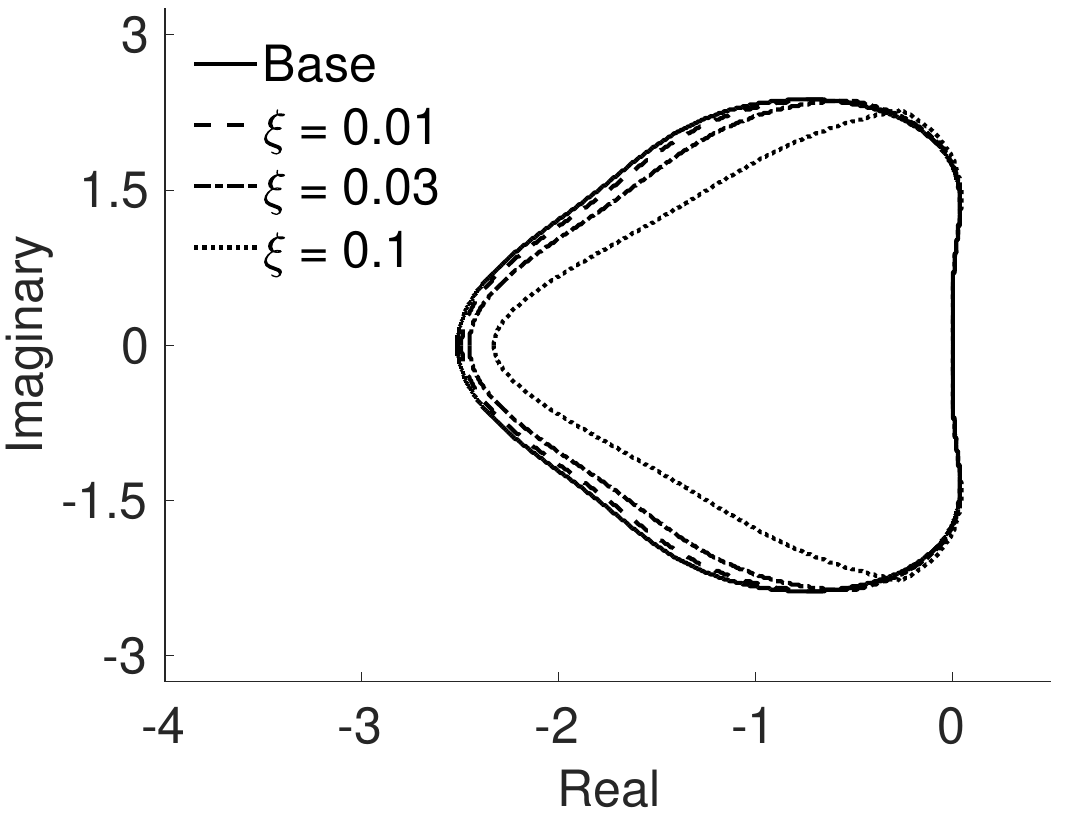}
\label{fig:stab2D-ERK33a-10}
}
\fi
\subfigure[ERK33a: $\mathcal{S}^{\textsc{2d}}_{\rho=10,\alpha=45^\circ}$]{
\includegraphics[width=0.3\linewidth]{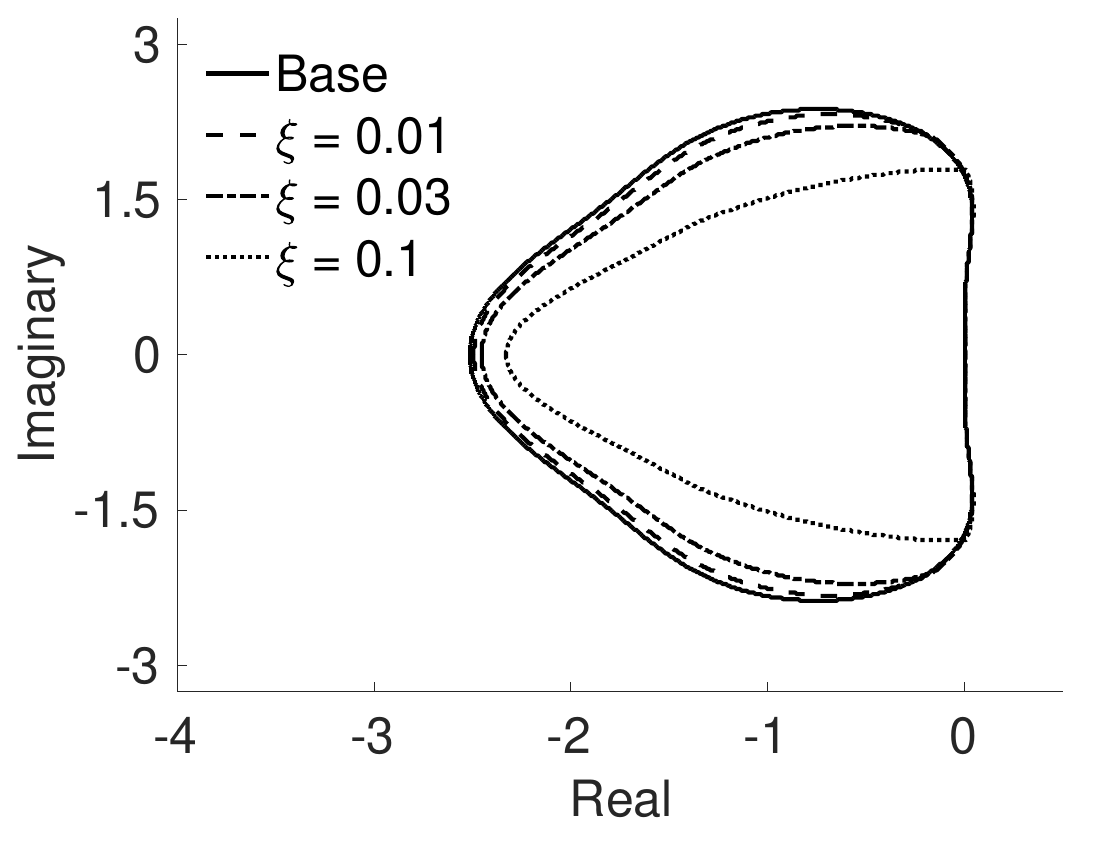}
\label{fig:stab2D-ERK33a-45}
}
\ifreport
\subfigure[ERK33a: $\mathcal{S}^{\textsc{2d}}_{\rho=10,\alpha=80^\circ}$]{
\includegraphics[width=0.3\linewidth]{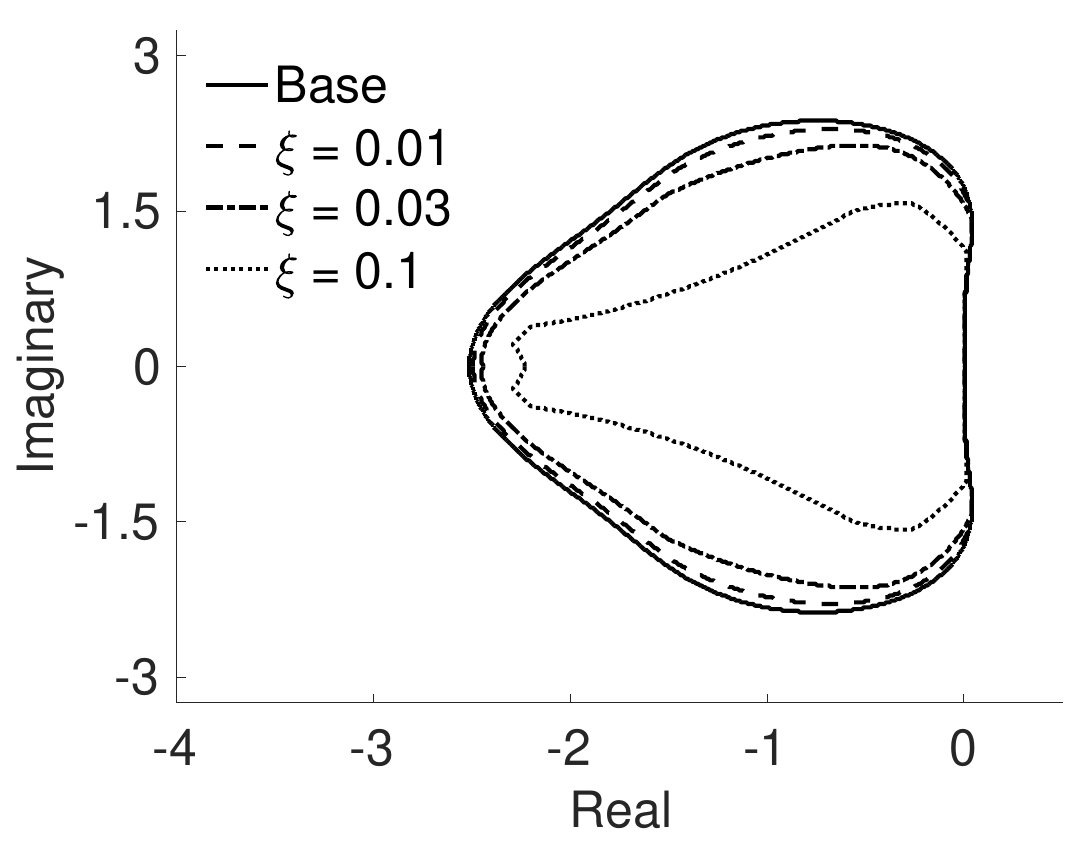}
\label{fig:stab2D-ERK33a-80}
}
\fi
\ifreport
\subfigure[ERK45a: $\mathcal{S}^{\textsc{2d}}_{\rho=10,\alpha=10^\circ}$]{
\includegraphics[width=0.3\linewidth]{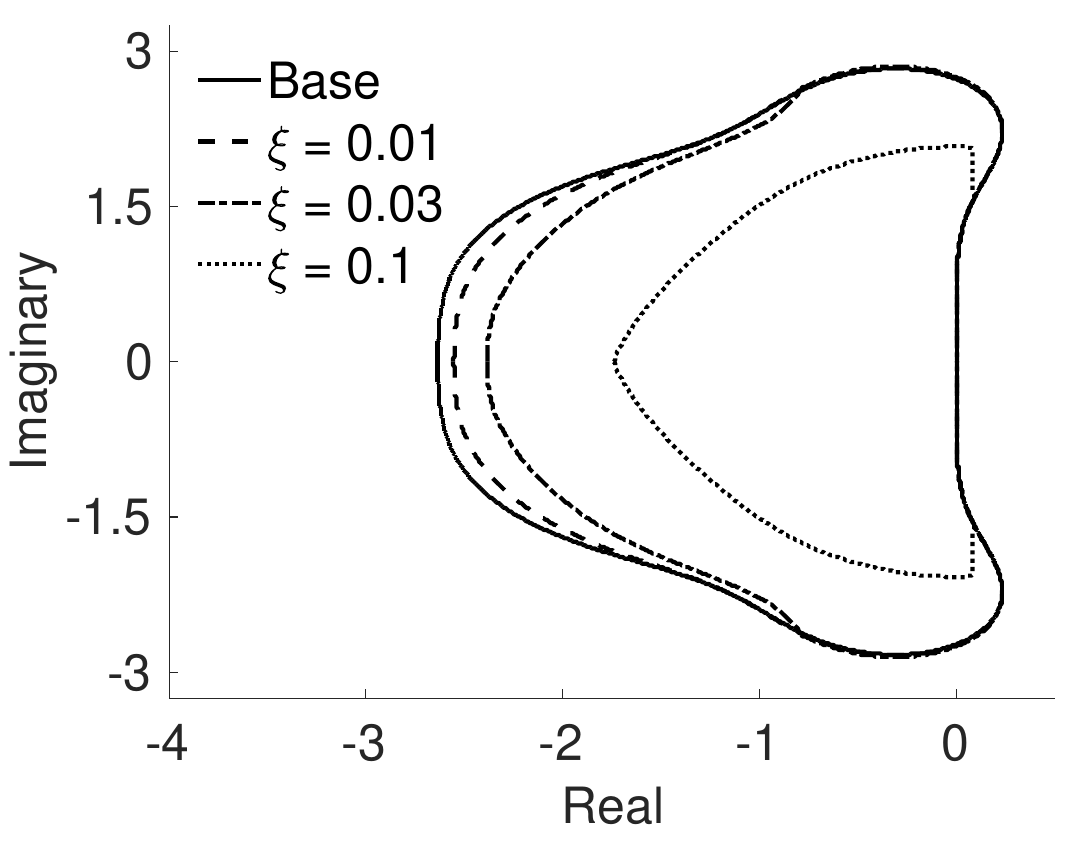}
\label{fig:stab2D-ERK45a}
}
\fi
\subfigure[ERK45a: $\mathcal{S}^{\textsc{2d}}_{\rho=10,\alpha=45^\circ}$]{
\includegraphics[width=0.3\linewidth]{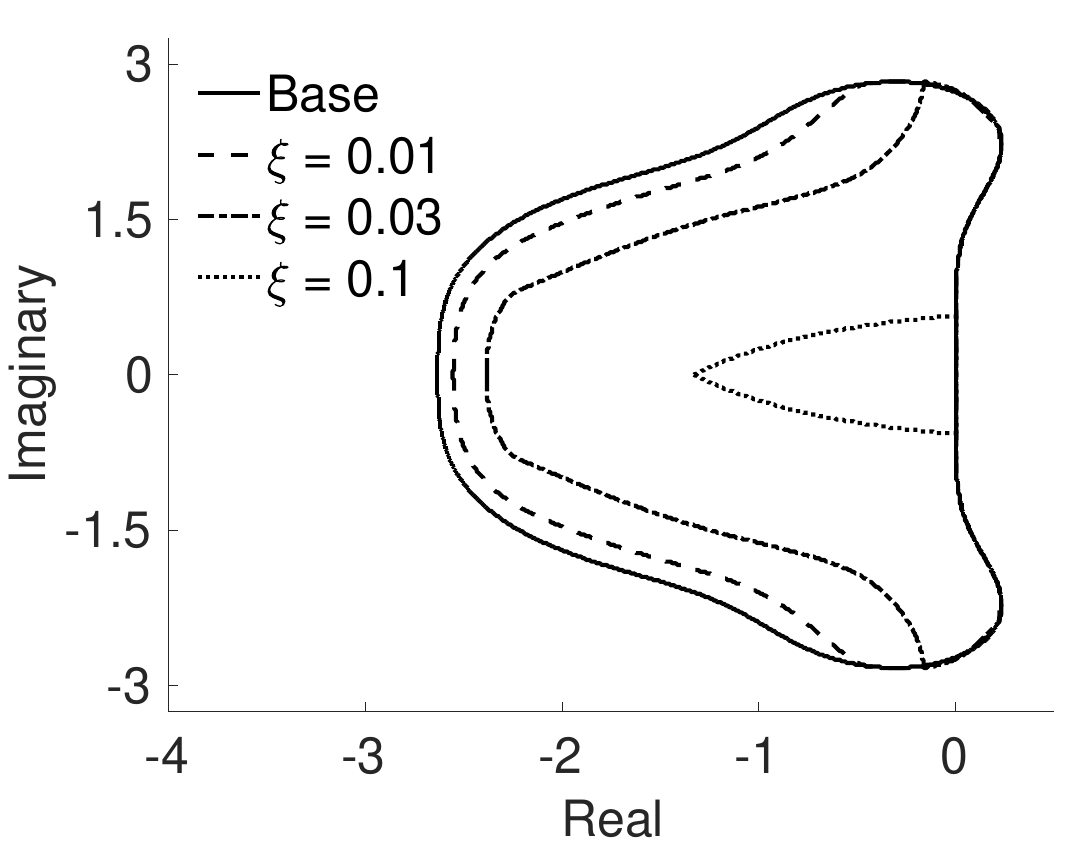}
\label{fig:stab2D-ERK45a-45}
}
\ifreport
\subfigure[ERK45a: $\mathcal{S}^{\textsc{2d}}_{\rho=10,\alpha=80^\circ}$]{
\includegraphics[width=0.3\linewidth]{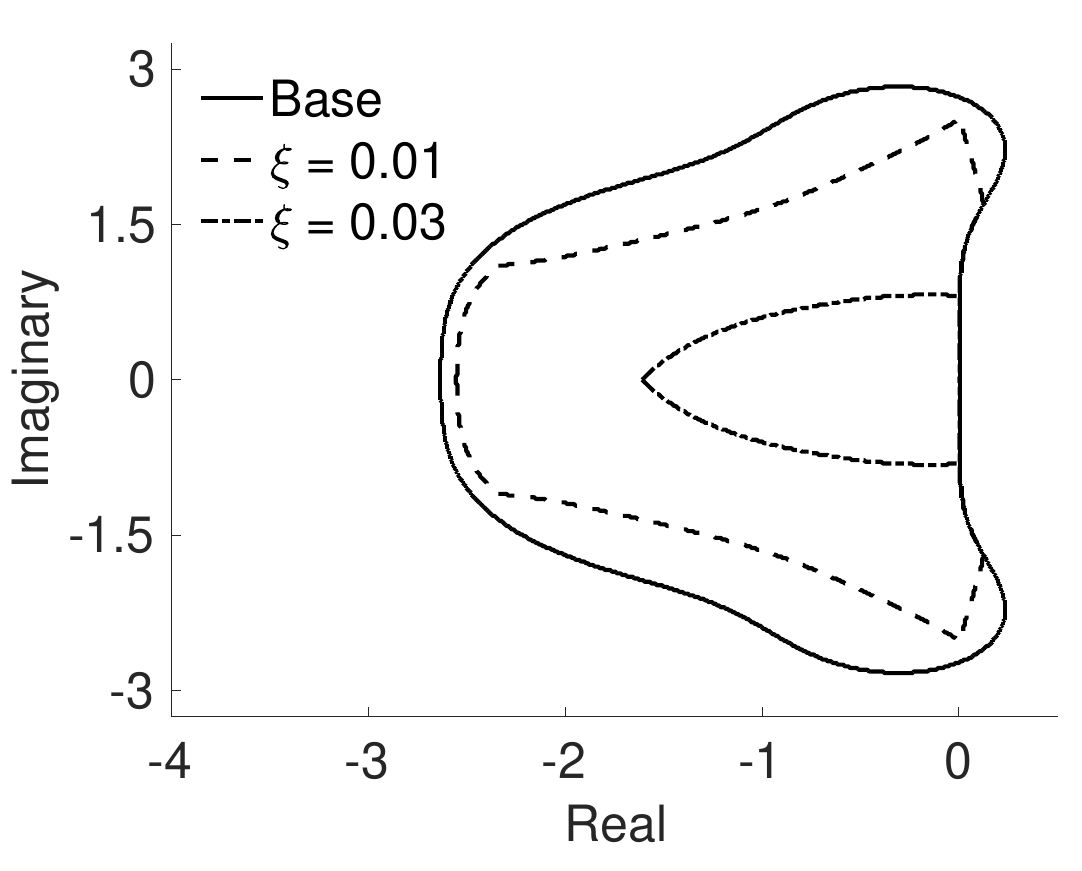}
\label{fig:stab2D-ERK45a-80}
}
\fi
\caption{Matrix slow stability regions \eqref{eqn:matrix-slow-stability-region} of explicit MRI-GARK schemes. The $\xi=0$ case corresponds to the base slow method. The stability is conditional with respect to the fast variable, and degrades with increasing $\alpha$. The stability also degrades with an increasing influence $\xi$ of the fast system on the slow one.} 
\label{fig:MRI-explicit-stability_2D}
\end{figure}


\section{Decoupled implicit methods}
\label{sec:implicit-methods}

Decoupled implicit methods compute implicitly the slow solution stages, and possible the fast solution stages. The decoupled aspect means that implicitness is within the slow and the fast processes, but no computation involves solving a fully coupled (fast plus slow) system of equations.

The idea of building a decoupled implicit scheme is to have pairs of consecutive stages with the same abscissae , such that different pairs have different values, e.g., $\cbase_{i-1}  < \cbase_{i}=\cbase_{i+1} < \cbase_{i+2}$. The stages with the lower index in each pair are explicit, and allow the fast solution to be propagated forward in time, e.g., from $\cbase_{i-1}$ to $\cbase_{i}$. The stages with the higher index in each pair are implicit, the fast solution is not advanced, and the slow solution is computed implicitly. 
\ifreport
For our example the scheme computes the two consecutive stages of \eqref{eqn:MIS-additive} as follows:
\begin{subequations}
\label{eqn:MIS-impl-additive}
\begin{eqnarray}
%
\label{eqn:MIS-impl-additive-explicit-stage}
\qquad\qquad&& \begin{cases}
v(0) = \Ys_{i-1}, \\
 v' = \Delta \cbase_{i}  \, \funf \left( v \right) + \sum_{j=1}^{i-1} \gamma_{i-1,j}\left(\frac{\theta}{H}\right) \, \funs\bigl( Y^{\{\s\}}_j \bigr), \quad \theta \in  [0,  H],  \\
\Ys_{i} = v(H), 
\end{cases}
\\
\label{eqn:MIS-impl-additive-implicit-stage}
&& \Ys_{i+1} = \Ys_{i} + H\,\sum_{j=1}^{i+1} \overline{\gamma}_{i,j} \, \funs\bigl( Y^{\{\s\}}_j \bigr).
\end{eqnarray}
\end{subequations}
To promote stability we build the slow tendency terms using linear combinations of only the implicit stages, i.e., the coefficients $\gamma_{i,j}\left(\tau\right)=0$ for all explicit stages $j$.
\fi

In designing a decoupled implicit MRI-GARK scheme one starts with a diagonally implicit slow base method, with an explicit first stage, and with increasing abscissae $\cbase_{i}<\cbase_{i+1}$. If the base method does not have an explicit first stage one appends a first row of zeros and a first column of zeros to its Butcher tableau.
The base (E)SDIRK scheme is extended with $s^{\{\s\}}$ additional stages such that $\cbase_{i}< \mathbase{c}_{i}^\textnormal{ext} = \cbase_{i+1}$ for $i=1,\cdots,s^{\{\s\}}$. In the extended scheme the base method stages are odd-numbered, and the added stages are even-numbered. We choose to use only slow function values evaluated at the base method stages (not at the additional stages). For this, in the extended matrix of coefficients we have $\mathbase{a}_{i,2j}^\textnormal{ext}=0$ for any additional stage $2j$. The additional stages are explicit, i.e., $\mathbase{a}_{2i,j}^\textnormal{ext}=0$ for any additional stage $2i$ and $j \ge 2i$. The additional entries in the vector of weights are zeros, $\mathbase{b}_{2j}^\textnormal{ext}=0$. 

\subsection{Implicit trapezoidal method}
%
The implicit trapezoidal scheme \eqref{eqn:implicit-trapezoidal}, named MRI-GARK-IRK21a, has the coefficients:
\begin{tiny}
\begin{equation*}
\renewcommand{\arraystretch}{2}
\mathsf{A}^{\{\s\}} = 
\left[
\begin{array}{c | ccc}
\scriptstyle 0 &\scriptstyle  0 &\scriptstyle  0 &\scriptstyle  0 \\
\scriptstyle 1 &\scriptstyle  1 &\scriptstyle  0 &\scriptstyle  0 \\
\scriptstyle 1 &   \frac{1}{2} &\scriptstyle  0 & \frac{1}{2} \\ 
 \hline
\scriptstyle 1 &   \frac{1}{2} &\scriptstyle  0 & \frac{1}{2} \\
\hline
\scriptstyle 1 &\scriptstyle  0 &\scriptstyle  0 &\scriptstyle  1
\end{array}
\right]
\equiv
\left[
\begin{array}{c | cc}
\scriptstyle 0  &\scriptstyle  0 &\scriptstyle  0 \\
\scriptstyle 1 &\scriptstyle  1  &\scriptstyle  0 \\
\scriptstyle 1 &   \frac{1}{2}  & \frac{1}{2} \\ 
 \hline
\scriptstyle 1 &   \frac{1}{2}  & \frac{1}{2} \\
\hline
\scriptstyle 1 &\scriptstyle  0  &\scriptstyle  1
\end{array}
\right],
\quad
{\Gamma}^0 = \left[
\begin{array}{ccc}
\hphantom{-}\scriptstyle  1 &\scriptstyle  0 &\scriptstyle  0 \\
 {\scriptstyle -}\frac{1}{2} &\scriptstyle  0 & \frac{1}{2} \\
\hphantom{-}\scriptstyle  0 &\scriptstyle  0 &\scriptstyle  0 \\
 \hline
 {\scriptstyle -}\frac{1}{2} &\scriptstyle  0 & \frac{1}{2} 
\end{array}
\right]
\equiv
\left[
\begin{array}{cc}
\hphantom{-}\scriptstyle  1  &\scriptstyle  0 \\
 {\scriptstyle -}\frac{1}{2}  & \frac{1}{2} \\
\hphantom{-}\scriptstyle  0  &\scriptstyle  0 \\
 \hline
 {\scriptstyle -}\frac{1}{2}  & \frac{1}{2} 
\end{array}
\right].
\end{equation*}
\end{tiny}
Since only the odd-numbered columns are non-zero we omit the even-numbered columns of zeros from the method coefficient tables.

The scalar slow stability region $\mathcal{S}^{\textsc{1d}}_{\infty,\alpha}$  \eqref{eqn:scalar-slow-stability-region}  for the implicit trapezoidal method is shown in Figure  \ref{fig:MRI-implicit-stability_1D}. The matrix stability region $\mathcal{S}^{\textsc{2d}}_{\rho=10,\alpha}$ \eqref{eqn:matrix-slow-stability-region}  is shown in Figure \ref{fig:MRI-implicit-stability_2D}.
%

%
\subsection{An ESDIRK order 3 method, stiffly accurate, with equidistant abscissae}
%
%
The scheme MRI-GARK-ESDIRK34a is defined by:
\begin{tiny}
\begin{equation*}
\renewcommand{\arraystretch}{2}
\begin{split}
\mathsf{A}^{\{\s\}} &= \left[
\begin{array}{c | cccc}
%
\scriptstyle  {0}&\scriptstyle   {0} &\scriptstyle   {0} & \scriptstyle   {0}  &\scriptstyle   {0} \\
\frac{1}{3}& \frac{1}{3} &\scriptstyle  0  &\scriptstyle  0 &\scriptstyle  0  \\
{\frac{1}{3}}&  {\frac{1-3\lambda}{3}}  & \scriptstyle {\lambda}   &\scriptstyle  {0}  &\scriptstyle  {0} \\
\frac{2}{3}&  \frac{-24 \lambda ^2+4 \lambda +1}{24 \lambda -6} & \frac{24 \lambda ^2+12 \lambda -5}{24 \lambda -6}  &\scriptstyle  0  &\scriptstyle  0 \\
{\frac{2}{3}}&  {\frac{\lambda }{3-12 \lambda}}  & {\frac{2 \left(6 \lambda ^2-6 \lambda +1\right)}{3-12 \lambda}}  &\scriptstyle {\lambda}   &\scriptstyle  {0} \\
\scriptstyle  1&  \frac{1}{4} & {\scriptstyle   3 \lambda }  & \frac{3-12\lambda}{4}  &\scriptstyle  0  \\
\scriptstyle  {1}& {\frac{1-4\lambda}{4}}   & \scriptstyle   {3 \lambda }   & {\frac{3-12\lambda}{4}}   &\scriptstyle {\lambda}  \\
 \hline
\scriptstyle  1&{ \frac{1-4\lambda}{4} }  & \scriptstyle   {3 \lambda}   & {\frac{3-12\lambda}{4}}  &\scriptstyle { \lambda} \\
\hline
\scriptstyle  1& \scriptstyle  \widehat{b}_1 & \scriptstyle  \widehat{b}_3 & \scriptstyle  \widehat{b}_5 & \scriptstyle  \widehat{b}_7
\end{array}
\right],
\\
{\Gamma}^0 &= \left[
\begin{array}{cccc}
 \frac{1}{3} &\scriptstyle  0 &\scriptstyle  0 &\scriptstyle  0 \\
\scriptstyle -\lambda  &\scriptstyle  \lambda  &\scriptstyle  0 &\scriptstyle  0 \\
 \frac{3-10 \lambda }{24 \lambda -6} & \frac{5-18 \lambda }{6-24 \lambda } &\scriptstyle  0 &\scriptstyle  0 \\
 \frac{-24 \lambda ^2+6 \lambda +1}{6-24 \lambda }  & \frac{-48 \lambda ^2+12 \lambda +1}{24 \lambda -6}  &\scriptstyle  \lambda   &\scriptstyle  0 \\
 \frac{3-16 \lambda }{12-48 \lambda }  & \frac{48 \lambda ^2-21 \lambda +2}{12 \lambda -3}  & \frac{3-16\lambda}{4}   &\scriptstyle  0 \\
\scriptstyle  -\lambda   &\scriptstyle  0  &\scriptstyle  0  & \scriptstyle \lambda  \\
\scriptstyle  0 &\scriptstyle  0 &\scriptstyle  0  &\scriptstyle  0 \\
\hline
\scriptstyle  \widehat{\gamma}^0_{7,1} &\scriptstyle   \widehat{\gamma}^0_{7,3} &\scriptstyle   \widehat{\gamma}^0_{7,5} &\scriptstyle   \widehat{\gamma}^0_{7,7}
\end{array}
\right],
\end{split}
\end{equation*}
\end{tiny}
with
\begin{tiny}
\begin{equation*}
\renewcommand{\arraystretch}{2}
\begin{split}
\widehat{b}_{\{1,3,5,7\}} &= 
{\scriptstyle \frac{1}{4 \left(6 \lambda ^2-6 \lambda +1\right)^2} }\,
\left[
\begin{array}{c}
{\scriptstyle 576 \lambda ^6+1008 \lambda ^5-2082 \lambda ^4+1236 \lambda ^3-333 \lambda
   ^2+42 \lambda -2} \\
  {\scriptstyle -3 \left(216
   \lambda ^6+360 \lambda ^5-762 \lambda ^4+456 \lambda ^3-129 \lambda ^2+18 \lambda
   -1\right)}\\
  {\scriptstyle 9 (1-4 \lambda )^2 \lambda  \left(6 \lambda ^3+12 \lambda ^2-13 \lambda +2\right)} \\
 \scriptstyle -12 \lambda ^2 \left(2 \lambda ^2+4 \lambda -1\right)\,\left(6 \lambda ^2-6 \lambda +1\right)
   \end{array}
\right], \\
\widehat{\gamma}^0_{7,\{1,3,5,7\}}
&=
{\scriptstyle \frac{1}{4 \left(6 \lambda ^2-6 \lambda +1\right)^2}}\,
\left[
\begin{array}{c}
\scriptstyle 576 \lambda ^6+1152 \lambda ^5-2406 \lambda ^4+1500 \lambda ^3-429 \lambda ^2+58 \lambda -3 \\
 \scriptstyle  - 6 \left(216 \lambda ^6+432 \lambda ^5-906 \lambda ^4+552 \lambda ^3-153 \lambda ^2+20 \lambda -1\right) \\ 
\scriptstyle 3 (4 \lambda -1) \left(72 \lambda ^5+162 \lambda ^4-264 \lambda ^3+111 \lambda ^2-18 \lambda +1\right) \\
\scriptstyle     -4 \lambda  \left(6 \lambda ^3+18 \lambda ^2-9 \lambda +1\right) (6 \lambda ^2-6 \lambda +1) 
\end{array}
\right].
\end{split}
\end{equation*}
\end{tiny}
The diagonal coefficient satisfies:
\begin{equation}
\label{eqn:lambda3}
-1+9 \lambda-18 \lambda^2+6 \lambda^3 = 0
\quad \Rightarrow \quad
\lambda \approx 0.435866521508458999416019.
\end{equation}
The scalar slow stability region $\mathcal{S}^{\textsc{1d}}_{\infty,\alpha}$  \eqref{eqn:scalar-slow-stability-region} is shown in Figure  \ref{fig:MRI-implicit-stability_1D}. The matrix stability region $\mathcal{S}^{\textsc{2d}}_{\rho=10,\alpha}$ \eqref{eqn:matrix-slow-stability-region}  is shown in Figure \ref{fig:MRI-implicit-stability_2D}.

\subsection{A SDIRK order 3 method, stiffly accurate}
The scheme MRI-GARK-SDIRK33a is defined by the following coefficients:
\begin{tiny}
\begin{equation*}
\renewcommand{\arraystretch}{2}
\begin{split}
\mathsf{A}^{\{\s\}} &= \left[ \begin{array}{c | cccc}
\scriptstyle 0&\scriptstyle  0  &\scriptstyle  0 &\scriptstyle  0 &\scriptstyle  0 \\
\scriptstyle  \lambda&\scriptstyle   \lambda   &\scriptstyle  0 &\scriptstyle  0 &\scriptstyle  0 \\
\scriptstyle  \lambda&\scriptstyle  0  &\scriptstyle  \lambda   &\scriptstyle  0  &\scriptstyle  0 \\
\frac{6 \lambda ^2-9 \lambda +2}{6 \lambda ^2-12 \lambda +3}&\scriptstyle  0  & \frac{6 \lambda ^2-9 \lambda +2}{6 \lambda ^2-12 \lambda +3} 0 &\scriptstyle  0 &\scriptstyle  0 \\
\frac{6 \lambda ^2-9 \lambda +2}{6 \lambda ^2-12 \lambda +3}&\scriptstyle  0 & \frac{6 \lambda ^2-9 \lambda +2}{6 \lambda ^2-12 \lambda +3}-{\scriptstyle   \lambda}   &\scriptstyle  \lambda  &\scriptstyle  0  \\
\scriptstyle 1&\scriptstyle  0  &\scriptstyle 3 \lambda   &\scriptstyle   1-3 \lambda  &\scriptstyle  0  \\
\scriptstyle 1&\scriptstyle  0  & \frac{1-4 \lambda }{-12 \lambda ^3+36 \lambda ^2-24 \lambda +4}  & {\scriptstyle -}\frac{3 \left(2 \lambda ^2-4 \lambda +1\right)^2}{4 \left(3 \lambda ^3-9\lambda ^2+6 \lambda -1\right)}  &\scriptstyle   \lambda  \\
   \hline
\scriptstyle  1&\scriptstyle  0  & \frac{1-4 \lambda }{-12 \lambda ^3+36 \lambda ^2-24 \lambda +4}  & {\scriptstyle -}\frac{3 \left(2 \lambda ^2-4 \lambda +1\right)^2}{4 \left(3 \lambda ^3-9
   \lambda ^2+6 \lambda -1\right)}  &\scriptstyle   \lambda  \\
   \hline
\scriptstyle  1&\scriptstyle  0 &
 \frac{1}{2-2 \lambda } &
\scriptstyle  0 &
 \frac{1-2 \lambda }{2-2 \lambda }
   \end{array}\right],
\\
\Upgamma^0  &= \left[
\begin{array}{cccc}
 \frac{1}{3}  &\scriptstyle  0  &\scriptstyle  0  &\scriptstyle  0 \\
 -{\scriptstyle \lambda}  & {\scriptstyle \lambda}  &\scriptstyle  0  &\scriptstyle  0 \\
 \frac{3-10 \lambda }{24 \lambda -6}  & \frac{5-18 \lambda }{6-24 \lambda }  &\scriptstyle  0  &\scriptstyle  0 \\
 \frac{-24 \lambda ^2+6 \lambda +1}{6-24 \lambda } & \frac{-48 \lambda ^2+12 \lambda +1}{24 \lambda -6}  & {\scriptstyle \lambda}   &\scriptstyle  0 \\
 \frac{3-16 \lambda }{12-48 \lambda }  & \frac{48 \lambda ^2-21 \lambda +2}{12 \lambda -3}  & \frac{3-16\lambda}{4}  &\scriptstyle  0  \\
 -{\scriptstyle \lambda}   &\scriptstyle  0  &\scriptstyle  0  & {\scriptstyle \lambda} \\
\scriptstyle   0  &\scriptstyle  0  &\scriptstyle  0  &\scriptstyle   0 \\
\hline
\scriptstyle  0 & \frac{-6 \lambda ^3+14 \lambda ^2-7 \lambda +1}{12 \lambda ^4-48 \lambda ^3+60 \lambda ^2-28 \lambda
   +4} &\frac{3 \left(2 \lambda ^2-4 \lambda +1\right)^2}{4 \left(3 \lambda ^3-9 \lambda ^2+6 \lambda
   -1\right)} &\frac{2 \lambda ^2-4 \lambda +1}{2-2 \lambda }
\end{array}
\right],
\end{split}
\end{equation*}
\end{tiny}
with the same diagonal coefficient $\lambda$ as MRI-GARK-ESDIRK34a. 


The scalar slow stability region $\mathcal{S}^{\textsc{1d}}_{\infty,\alpha}$  \eqref{eqn:scalar-slow-stability-region}  for SDIRK33a is shown in Figure  \ref{fig:MRI-implicit-stability_1D}.

\ifreport
The solution \eqref{eqn:MIS-additive-internal-ode} is computed as follows:
\begin{eqnarray*}
&& \Ys_{1} = y_{n}, \\
&& \begin{cases}
v(0) = \Ys_{1}, \\
 v' = (c_2-c_1) \, \funf \left( v \right) + \gamma^0_{1,1} \, \funs\bigl( Y^{\{\s\}}_1 \bigr), \quad \theta \in  [0,  H],  \\
\Ys_{2} = v(H), 
\end{cases} \\
&& \Ys_{3} = \Ys_{2} + \sum_{j=1}^3  \widetilde{\gamma}_{2,j}\, \funs\bigl( Y^{\{\s\}}_j \bigr) \\
&&\qquad = \Ys_{2} - \lambda\, \funs\bigl( Y^{\{\s\}}_1 \bigr) + \lambda\, \funs\bigl( Y^{\{\s\}}_3 \bigr)  \\
&& \begin{cases}
v(0) = \Ys_{3}, \\
 v' = (c_4-c_3) \, \funf \left( v \right) + \gamma^0_{3,1} \, \funs\bigl( Y^{\{\s\}}_1 \bigr) + \gamma^0_{3,3} \, \funs\bigl( Y^{\{\s\}}_3 \bigr), \quad \theta \in  [0,  H],  \\
\Ys_{4} = v(H), 
\end{cases} \\
&& \Ys_{5} = \Ys_{4} + \sum_{j=1}^5  \widetilde{\gamma}_{4,j}\, \funs\bigl( Y^{\{\s\}}_j \bigr)  \\
&&\qquad = \Ys_{4}  -\lambda\, \funs\bigl( Y^{\{\s\}}_3 \bigr) + \lambda\, \funs\bigl( Y^{\{\s\}}_5 \bigr) \\
&& \begin{cases}
v(0) = \Ys_{5}, \\
 v' = (c_6-c_5) \, \funf \left( v \right) + \gamma^0_{5,1} \, \funs\bigl( Y^{\{\s\}}_1 \bigr) + \gamma^0_{5,3} \, \funs\bigl( Y^{\{\s\}}_3 \bigr)+ \gamma^0_{5,5} \, \funs\bigl( Y^{\{\s\}}_5 \bigr), \quad \theta \in  [0,  H],  \\
\Ys_{6} = v(H), 
\end{cases}\\
&& \Ys_{7} = \Ys_{6} + \sum_{j=1}^7 \widetilde{\gamma}_{6,j}\, \funs\bigl( Y^{\{\s\}}_j \bigr)  \\
&&\qquad =  \Ys_{6} + \gamma^0_{6,3}\, \funs\bigl( Y^{\{\s\}}_3 \bigr)
+ \gamma^0_{6,5}\, \funs\bigl( Y^{\{\s\}}_5 \bigr)
+ \lambda\,\funs\bigl( Y^{\{\s\}}_7 \bigr) \\
&& y_{n+1} = \Ys_{7}.
\end{eqnarray*}
\fi

\subsection{An ESDIRK order 4 method, stiffly accurate, with equidistant abscissae}
%
%
The scheme MRI-GARK-ESDIRK46a is defined by the following coefficients:
\begin{tiny}
\[
\renewcommand{\arraystretch}{2}
\mathsf{A}^{\{\s\}} = \left[
\begin{array}{c|cccccc}
\scriptstyle 0 &\scriptstyle 0 &\scriptstyle 0 &\scriptstyle 0 &\scriptstyle 0 &\scriptstyle 0 &\scriptstyle 0 \\
\frac{1}{5}& \frac{1}{5} &\scriptstyle 0 &\scriptstyle 0 &\scriptstyle 0 &\scriptstyle 0 &\scriptstyle 0 \\
\frac{1}{5}& -\frac{1}{20} & \frac{1}{4} &\scriptstyle 0 &\scriptstyle 0 &\scriptstyle 0 &\scriptstyle 0 \\
\frac{2}{5}&\scriptstyle 0 & \frac{2}{5} &\scriptstyle 0 &\scriptstyle 0 &\scriptstyle 0 &\scriptstyle 0 \\
\frac{2}{5}& -\frac{103}{380} & \frac{8}{19} & \frac{1}{4} &\scriptstyle 0 &\scriptstyle 0 &\scriptstyle 0 \\
\frac{3}{5} &\scriptstyle 0 &\scriptstyle 0 & \frac{3}{5} &\scriptstyle 0 &\scriptstyle 0 &\scriptstyle 0 \\
\frac{3}{5} &  \frac{202381}{316160} & \frac{2199}{31616} & -\frac{1197}{3328} & \frac{1}{4} &\scriptstyle 0 &\scriptstyle 0 \\
\frac{4}{5} &   0 &\scriptstyle 0 &\scriptstyle 0 & \frac{4}{5} &\scriptstyle 0 &\scriptstyle 0 \\
\frac{4}{5} &   \frac{1978577}{3575040} & \frac{20417}{119168} & -\frac{3579}{12544} & \frac{65}{588} & \frac{1}{4} &\scriptstyle 0 \\
\scriptstyle 1 &\scriptstyle 0 &\scriptstyle 0 &\scriptstyle 0 &\scriptstyle 0 & 1 &\scriptstyle 0 \\
\scriptstyle 1 & \frac{1}{4} & -\frac{7}{24} & \frac{13}{24} & \frac{13}{24} & -\frac{7}{24} & \frac{1}{4} \\
 \hline
\scriptstyle 1 & \frac{1}{4} & -\frac{7}{24} & \frac{13}{24} & \frac{13}{24} & -\frac{7}{24} & \frac{1}{4} \\
 \hline
\scriptstyle 1 &\scriptstyle 0 & \frac{18163}{52824} & \frac{13943}{52824} & \frac{3263}{52824} & \frac{11053}{52824} & \frac{1067}{8804}
\end{array}
\right],
\]
\end{tiny}
the matrix $\Gamma^0$ is:
\begin{tiny}
\[
\renewcommand{\arraystretch}{2}
\left[
\begin{array}{cccccc}
 \frac{1}{5} &\scriptstyle 0 &\scriptstyle 0 &\scriptstyle 0 &\scriptstyle 0 &\scriptstyle 0 \\
 -\frac{1}{4} & \frac{1}{4} &\scriptstyle 0 &\scriptstyle 0 &\scriptstyle 0 &\scriptstyle 0 \\
 \frac{1771023115159}{1929363690800} & -\frac{1385150376999}{1929363690800} &\scriptstyle 0 &\scriptstyle 0 &\scriptstyle 0 &\scriptstyle 0 \\
 \frac{914009}{345800} & -\frac{1000459}{345800} & \frac{1}{4} &\scriptstyle 0 &\scriptstyle 0 &\scriptstyle 0 \\
 \frac{18386293581909}{36657910125200} & \frac{5506531089}{80566835440} & -\frac{178423463189}{482340922700} &\scriptstyle 0 &\scriptstyle 0 &\scriptstyle 0 \\
 \frac{36036097}{8299200} & \frac{4621}{118560} & -\frac{38434367}{8299200} & \frac{1}{4} &\scriptstyle 0 &\scriptstyle 0 \\
 -\frac{247809665162987}{146631640500800} & \frac{10604946373579}{14663164050080} & \frac{10838126175385}{5865265620032} &
   -\frac{24966656214317}{36657910125200} &\scriptstyle 0 &\scriptstyle 0 \\
 \frac{38519701}{11618880} & \frac{10517363}{9682400} & -\frac{23284701}{19364800} & -\frac{10018609}{2904720} & \frac{1}{4} &\scriptstyle 0 \\
 -\frac{52907807977903}{33838070884800} & \frac{74846944529257}{73315820250400} & \frac{365022522318171}{146631640500800} &
   -\frac{20513210406809}{109973730375600} & -\frac{2918009798}{1870301537} &\scriptstyle 0 \\
 \frac{19}{100} & -\frac{73}{300} & \frac{127}{300} & \frac{127}{300} & -\frac{313}{300} & \frac{1}{4} \\
\scriptstyle  0 &\scriptstyle 0 &\scriptstyle 0 &\scriptstyle 0 &\scriptstyle 0 &\scriptstyle 0 \\
 \hline
 -\frac{1}{4} & \frac{5595}{8804} & -\frac{2445}{8804} & -\frac{4225}{8804} & \frac{2205}{4402} & -\frac{567}{4402}
\end{array}
\right],
\]
\end{tiny}
and the matrix $\Gamma^1$ is:
\begin{tiny}
\[
\renewcommand{\arraystretch}{2}
\left[
\begin{array}{cccccc}
\scriptstyle  0 &\scriptstyle 0 &\scriptstyle 0 &\scriptstyle 0 &\scriptstyle 0 &\scriptstyle 0 \\
\scriptstyle  0 &\scriptstyle 0 &\scriptstyle 0 &\scriptstyle 0 &\scriptstyle 0 &\scriptstyle 0 \\
 -\frac{1674554930619}{964681845400} & \frac{1674554930619}{964681845400} &\scriptstyle 0 &\scriptstyle 0 &\scriptstyle 0 &\scriptstyle 0 \\
 -\frac{1007739}{172900} & \frac{1007739}{172900} &\scriptstyle 0 &\scriptstyle 0 &\scriptstyle 0 &\scriptstyle 0 \\
 -\frac{8450070574289}{18328955062600} & -\frac{39429409169}{40283417720} & \frac{173621393067}{120585230675} &\scriptstyle 0 &\scriptstyle 0 &\scriptstyle 0 \\
 -\frac{122894383}{16598400} & \frac{14501}{237120} & \frac{121879313}{16598400} &\scriptstyle 0 &\scriptstyle 0 &\scriptstyle 0 \\
 \frac{32410002731287}{15434909526400} & -\frac{46499276605921}{29326328100160} & -\frac{34914135774643}{11730531240064} &
   \frac{45128506783177}{18328955062600} &\scriptstyle 0 &\scriptstyle 0 \\
 -\frac{128357303}{23237760} & -\frac{35433927}{19364800} & \frac{71038479}{38729600} & \frac{8015933}{1452360} &\scriptstyle 0 &\scriptstyle 0 \\
 \frac{136721604296777}{67676141769600} & -\frac{349632444539303}{146631640500800} & -\frac{1292744859249609}{293263281001600} &
   \frac{8356250416309}{54986865187800} & \frac{17282943803}{3740603074} &\scriptstyle 0 \\
 \frac{3}{25} & -\frac{29}{300} & \frac{71}{300} & \frac{71}{300} & -\frac{149}{300} &\scriptstyle 0 \\
\scriptstyle  0 &\scriptstyle 0 &\scriptstyle 0 &\scriptstyle 0 &\scriptstyle 0 &\scriptstyle 0 \\
 \hline
\scriptstyle   0 &\scriptstyle 0 &\scriptstyle 0 &\scriptstyle 0 &\scriptstyle 0 &\scriptstyle  0 
\end{array}
\right].
\]
\end{tiny}
The scalar slow stability regions $\mathcal{S}^{\textsc{1d}}_{\infty,\alpha}$ and $\mathcal{S}^{\textsc{1d}}_{1,\alpha}$  \eqref{eqn:scalar-slow-stability-region}  are shown in Figure  \ref{fig:MRI-implicit-stability_1D}. We see that the method is unconditionally stable in the slow variable and conditionally stable in the fast variable, or vice-versa. Therefore, the method is useful for systems combining a stiff slow component with a non-stiff fast component.
The matrix stability region $\mathcal{S}^{\textsc{2d}}_{\rho=10,\alpha}$ \eqref{eqn:matrix-slow-stability-region}  is shown in Figure \ref{fig:MRI-implicit-stability_2D}.

\ifreport
\subsection{Another ESDIRK order 4 method, stiffly accurate, with equidistant abscissae}
%
%
The scheme MRI-GARK-ESDIRK46b is defined by the following coefficients:
\begin{tiny}
\[
\renewcommand{\arraystretch}{2}
\mathsf{A}^{\{\s\}} = \left[
\begin{array}{c | cccccc}
\scriptstyle 0 &\scriptstyle 0 &\scriptstyle 0  &\scriptstyle 0 &\scriptstyle 0 &\scriptstyle 0  &\scriptstyle 0 \\
\frac{1}{5} & \frac{1}{5}  &\scriptstyle 0  &\scriptstyle 0 &\scriptstyle 0 &\scriptstyle 0&\scriptstyle 0  \\
\frac{1}{5} & {\scriptstyle -}\frac{1}{20}  & \frac{1}{4}  &\scriptstyle 0 &\scriptstyle 0 &\scriptstyle 0 &\scriptstyle 0  \\
\frac{2}{5} & \frac{29143073}{5141400}  & {\scriptstyle -}\frac{27086513}{5141400}  &\scriptstyle 0 &\scriptstyle 0  &\scriptstyle 0  &\scriptstyle 0 \\
\frac{2}{5}& \frac{43}{820}  & \frac{4}{41}  & \frac{1}{4}  &\scriptstyle 0 &\scriptstyle 0 &\scriptstyle 0  \\
\frac{3}{5}& \frac{89695}{24928}  & {\scriptstyle -}\frac{4417771}{541200}  & \frac{106216283}{20565600}  &\scriptstyle 0 &\scriptstyle 0 &\scriptstyle 0  \\
\frac{3}{5}& {\scriptstyle -}\frac{47603}{78720}  & \frac{13399}{7872}  & {\scriptstyle -}\frac{287}{384}  & \frac{1}{4} &\scriptstyle 0 &\scriptstyle 0  \\
\frac{4}{5}&\scriptstyle 0  &\scriptstyle 0  & {\scriptstyle -}\frac{2882749}{1028280}  & \frac{3705373}{1028280} &\scriptstyle 0 &\scriptstyle 0  \\
\frac{4}{5}& {\scriptstyle -}\frac{998863}{551040}  & \frac{223571}{55104}  & {\scriptstyle -}\frac{4267}{2688}  & {\scriptstyle -}\frac{3}{28}  & \frac{1}{4}  &\scriptstyle 0 \\
\scriptstyle 1&\scriptstyle 0  &\scriptstyle 0  &\scriptstyle 0  & \frac{21877409}{5141400}  & {\scriptstyle -}\frac{16736009}{5141400}  &\scriptstyle 0 \\
\scriptstyle 1& {\scriptstyle -}\frac{1}{12}  & \frac{25}{24}  & {\scriptstyle -}\frac{35}{24}  & \frac{15}{8}  & {\scriptstyle -}\frac{5}{8}  & \frac{1}{4} \\
 \hline
\scriptstyle 1& {\scriptstyle -}\frac{1}{12} & \frac{25}{24}  & {\scriptstyle -}\frac{35}{24}  & \frac{15}{8}  & {\scriptstyle -}\frac{5}{8}  & \frac{1}{4} \\
\hline
\scriptstyle 1& \frac{9971}{21400} & {\scriptstyle -}\frac{9}{20}  & {\scriptstyle -}\frac{1039}{6420}  & \frac{9263}{6420}  & {\scriptstyle -}\frac{5519}{12840}  & \frac{711}{5350}
\end{array}
\right],
\]
\end{tiny}
\begin{tiny}
\[
\renewcommand{\arraystretch}{2}
\Gamma^0 =
\left[
\begin{array}{ccccccccccc}
 \frac{1}{5}  &\scriptstyle 0  &\scriptstyle 0  &\scriptstyle 0  &\scriptstyle 0  &\scriptstyle 0 \\
 {\scriptstyle -}\frac{1}{4}  & \frac{1}{4}  &\scriptstyle 0  &\scriptstyle 0  &\scriptstyle 0 &\scriptstyle 0 \\
 {\scriptstyle -}\frac{18736957}{5141400}  & \frac{19765237}{5141400} 0 &\scriptstyle 0  &\scriptstyle 0  &\scriptstyle 0 &\scriptstyle 0  \\
 {\scriptstyle -}\frac{28873463}{5141400}  & \frac{27588113}{5141400}  & \frac{1}{4} &\scriptstyle 0  &\scriptstyle 0  &\scriptstyle 0  \\
 \frac{10779}{3040}  & {\scriptstyle -}\frac{4470571}{541200}  & \frac{101074883}{20565600} &\scriptstyle 0  &\scriptstyle 0  &\scriptstyle 0 \\
 {\scriptstyle -}\frac{6286157}{1495680}  & \frac{7118603}{721600}  & {\scriptstyle -}\frac{486347707}{82262400}  & \frac{1}{4}  &\scriptstyle 0  &\scriptstyle 0 \\
 \frac{47603}{78720}  & {\scriptstyle -}\frac{13399}{7872}  & {\scriptstyle -}\frac{11275823}{5484160}  & \frac{3448303}{1028280}  &\scriptstyle 0  &\scriptstyle 0 \\
 {\scriptstyle -}\frac{998863}{551040}  & \frac{223571}{55104}  & \frac{140048273}{115167360}  & {\scriptstyle -}\frac{26708821}{7197960}  & \frac{1}{4}  &\scriptstyle 0 \\
 \frac{998863}{551040}  & {\scriptstyle -}\frac{223571}{55104}  & \frac{4267}{2688}  & \frac{156997913}{35989800}  & {\scriptstyle -}\frac{18021359}{5141400}  &\scriptstyle 0 \\
 {\scriptstyle -}\frac{1}{12}  & \frac{25}{24}  & {\scriptstyle -}\frac{35}{24}  & {\scriptstyle -}\frac{3059321}{1285350}  & \frac{6761317}{2570700}  & \frac{1}{4} \\
\scriptstyle 0 &\scriptstyle 0 &\scriptstyle 0 &\scriptstyle 0 &\scriptstyle 0 &\scriptstyle 0 \\
 \hline
 \frac{35263}{64200} & {\scriptstyle -}\frac{179}{120}  & \frac{5549}{4280}  & {\scriptstyle -}\frac{5549}{12840}  & \frac{1253}{6420}  & {\scriptstyle -}\frac{1253}{10700} 
\end{array}
\right],
\]
\end{tiny}
and:
\[
{\scriptstyle \gamma^1_{3,1} = \frac{14587}{779}}, \quad {\scriptstyle \gamma^1_{3,3} = -\frac{14587}{779}}.
\]
\fi

\begin{figure}[h]
\centering
\subfigure[IRK21a: $\mathcal{S}^{\textsc{1d}}_{\rho=\infty,\alpha}$]{
\includegraphics[width=0.3\linewidth]{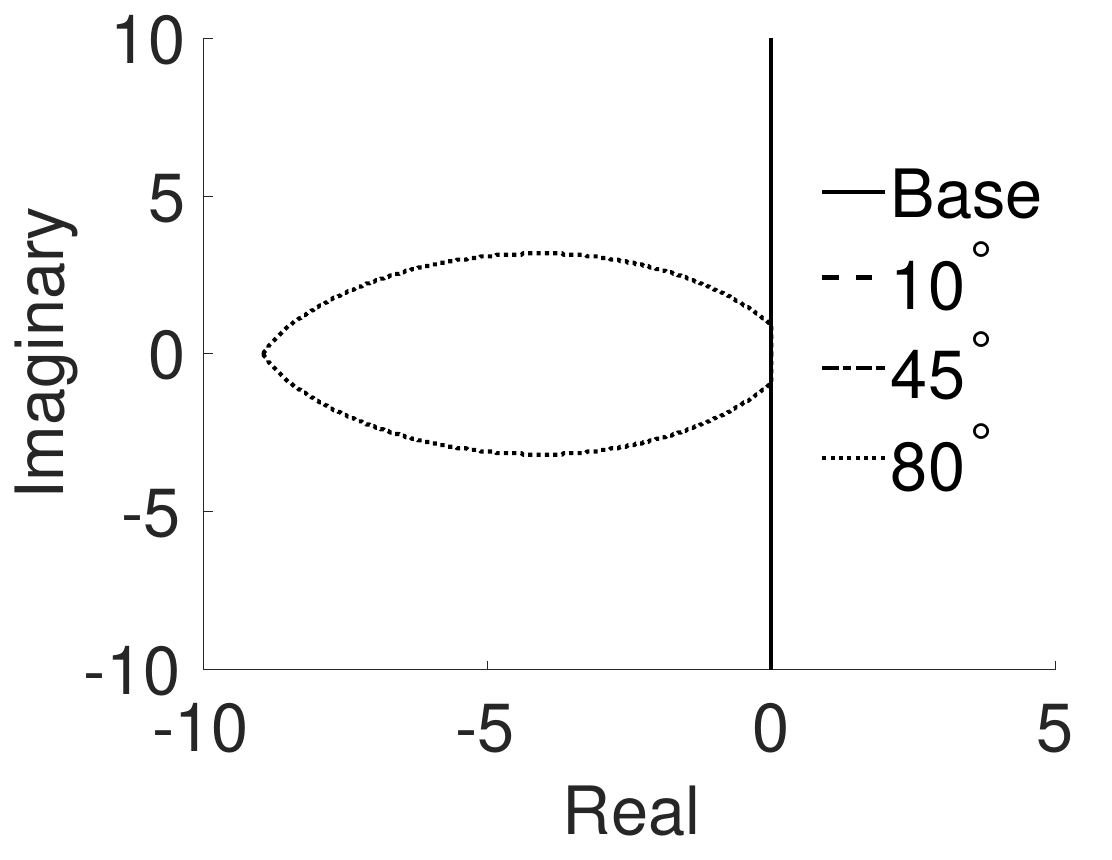}
\label{fig:stab1D-IRK21a}
}
\subfigure[ESDIRK34a: $\mathcal{S}^{\textsc{1d}}_{\rho=\infty,\alpha}$]{
\includegraphics[width=0.3\linewidth]{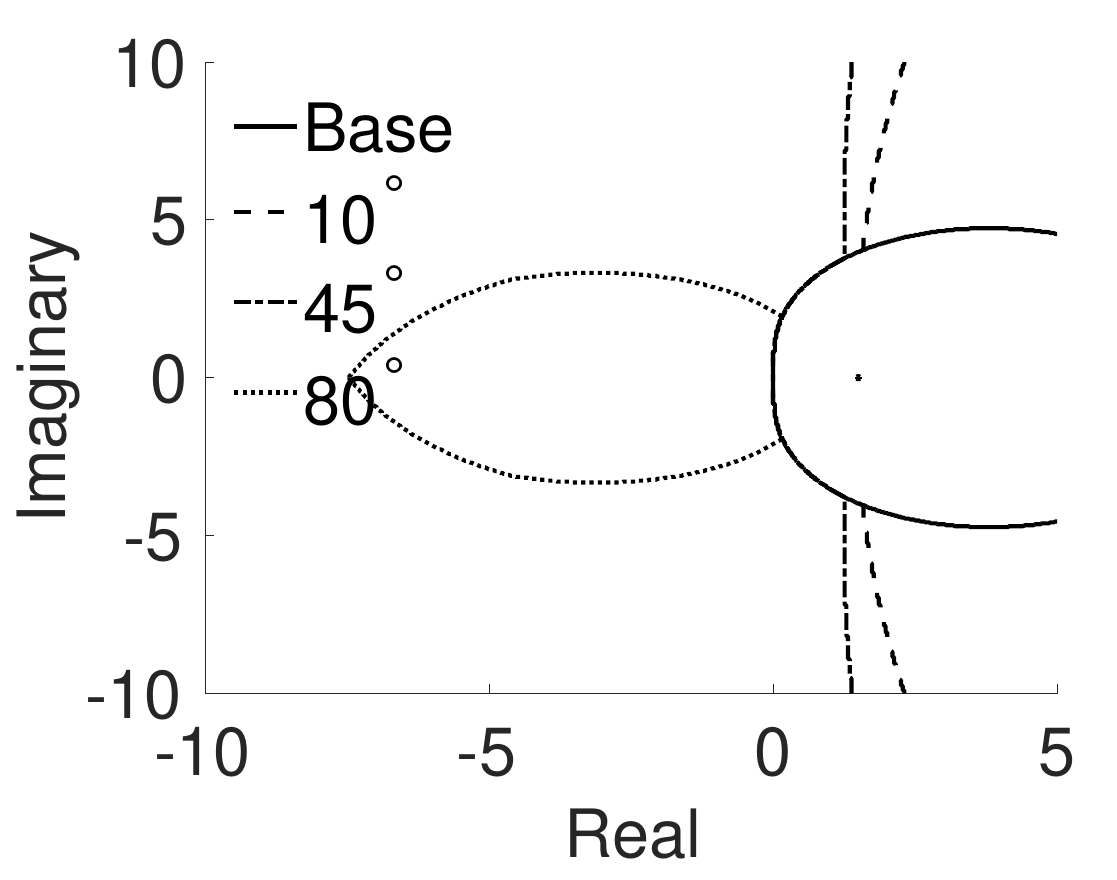}
\label{fig:stab1D-ESDIRK34a}
}
\subfigure[SDIRK33a: $\mathcal{S}^{\textsc{1d}}_{\rho=\infty,\alpha}$]{
\includegraphics[width=0.3\linewidth]{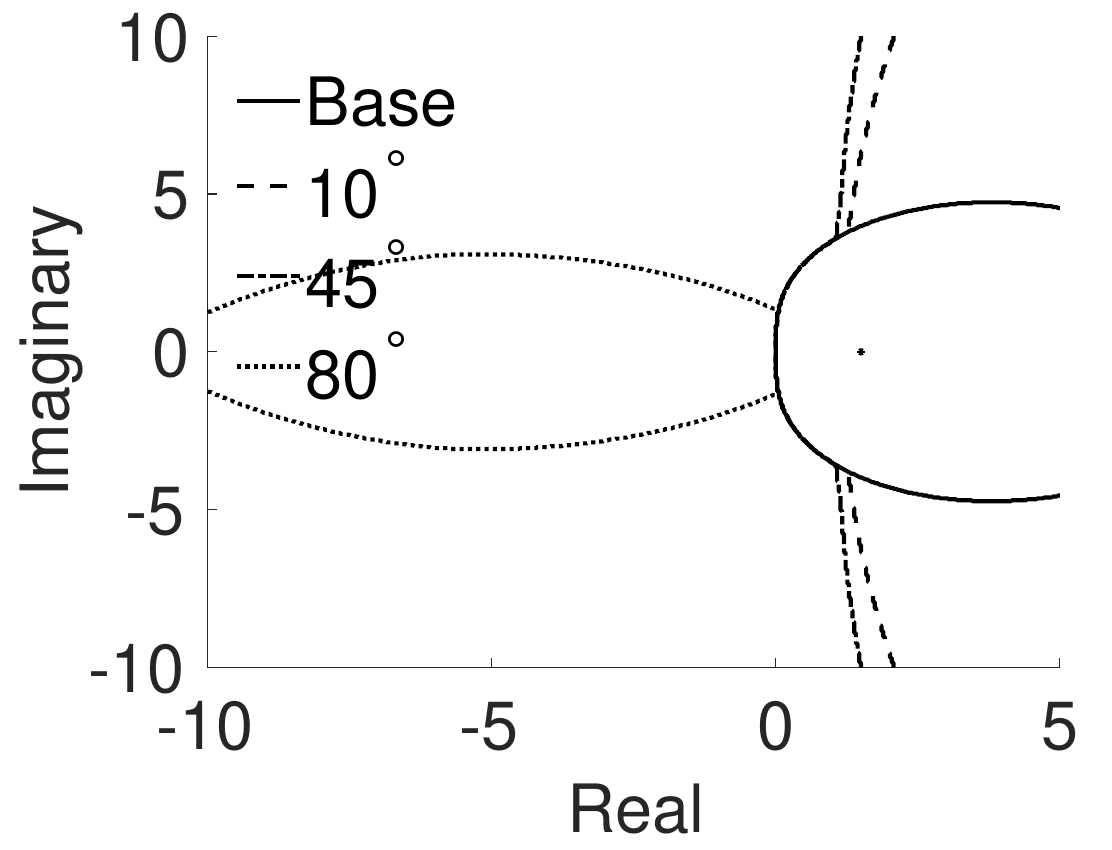}
\label{fig:stab1D-SDIRK33a}
}
\subfigure[ESDIRK46a: $\mathcal{S}^{\textsc{1d}}_{\rho=\infty,\alpha}$]{
\includegraphics[width=0.3\linewidth]{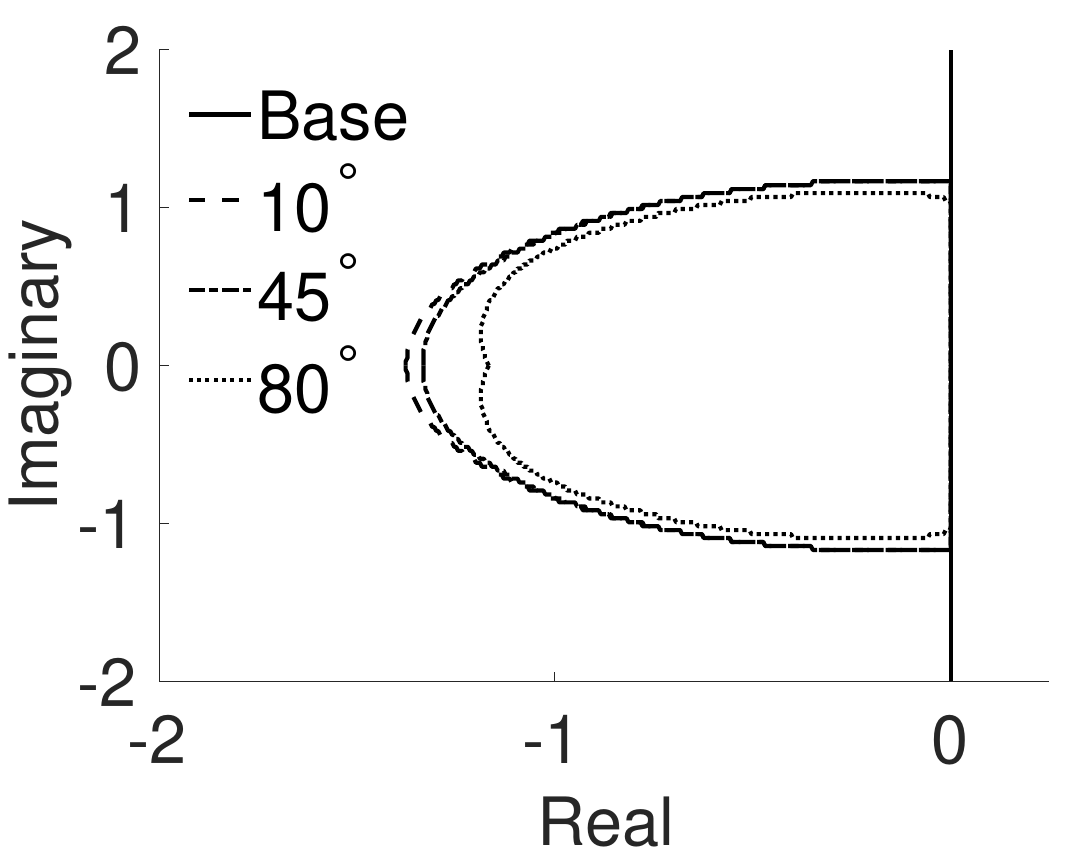}
\label{fig:stab1D-ESDIRK46a-rhoinf}
}
\subfigure[ESDIRK46a: $\mathcal{S}^{\textsc{1d}}_{\rho=1,\alpha}$]{
\includegraphics[width=0.3\linewidth]{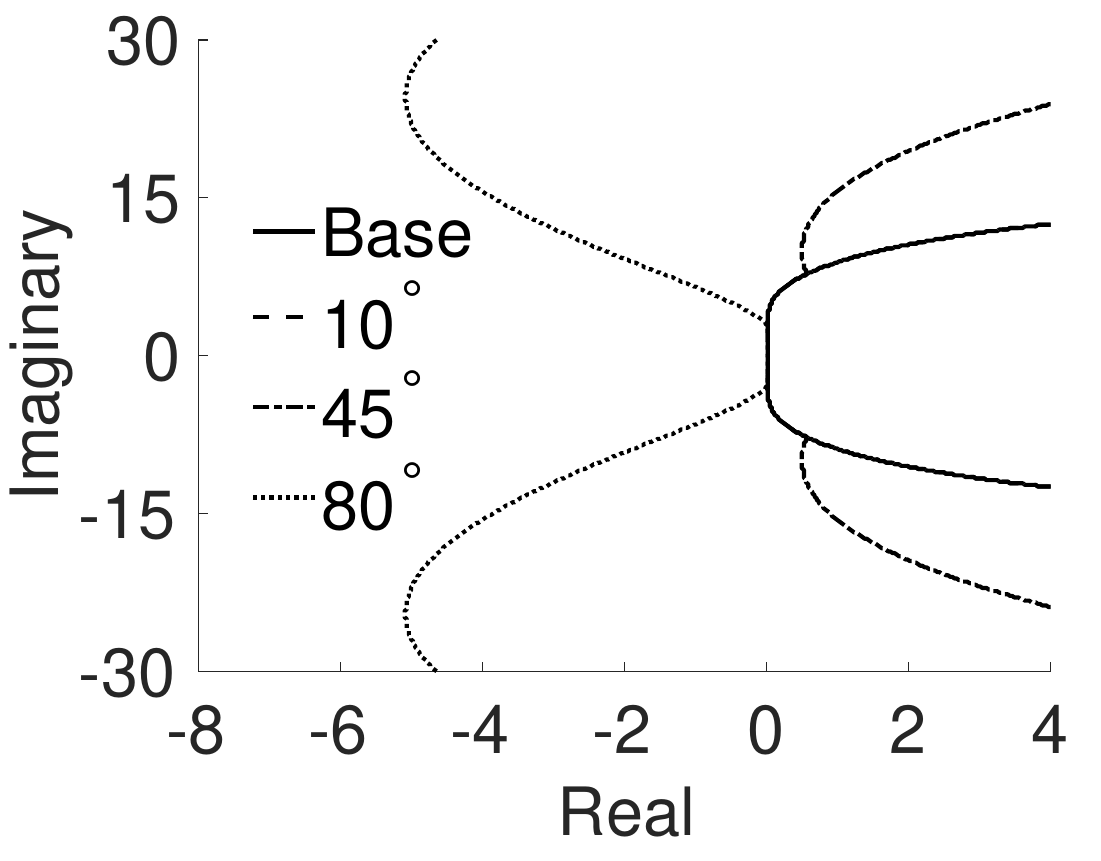}
\label{fig:stab1D-ESDIRK46a-rho1}
}

\caption{Scalar slow stability regions $\mathcal{S}^{\textsc{1d}}_{\infty,\alpha}$ of decoupled implicit MRI-GARK schemes. For $\rho=\infty$ in \eqref{eqn:scalar-slow-stability-region} they correspond to $A(\alpha)$ unconditional stability in the fast variable. The stability degrades with increasing $\alpha$, and for $\alpha = 80^\circ$ all MRI-GARK schemes lose the unconditional stability of the the slow base methods.} 
\label{fig:MRI-implicit-stability_1D}
\end{figure}

\begin{figure}[h]
\centering
\ifreport
\subfigure[IRK21a: $\mathcal{S}^{\textsc{2d}}_{\rho=10,\alpha=10^\circ}$]{
\includegraphics[width=0.3\linewidth]{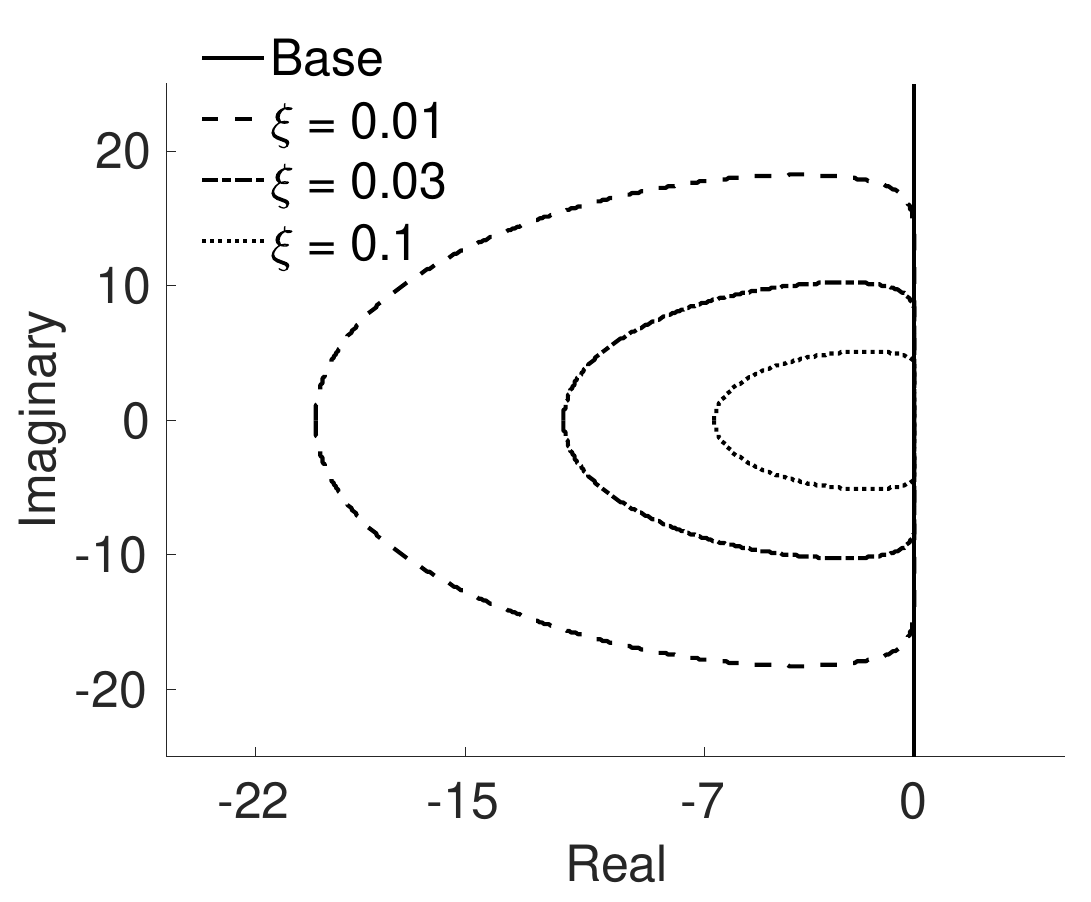}
\label{fig:stab2D-IRK21a-10}
}
\fi
\subfigure[IRK21a: $\mathcal{S}^{\textsc{2d}}_{\rho=10,\alpha=45^\circ}$]{
\includegraphics[width=0.3\linewidth]{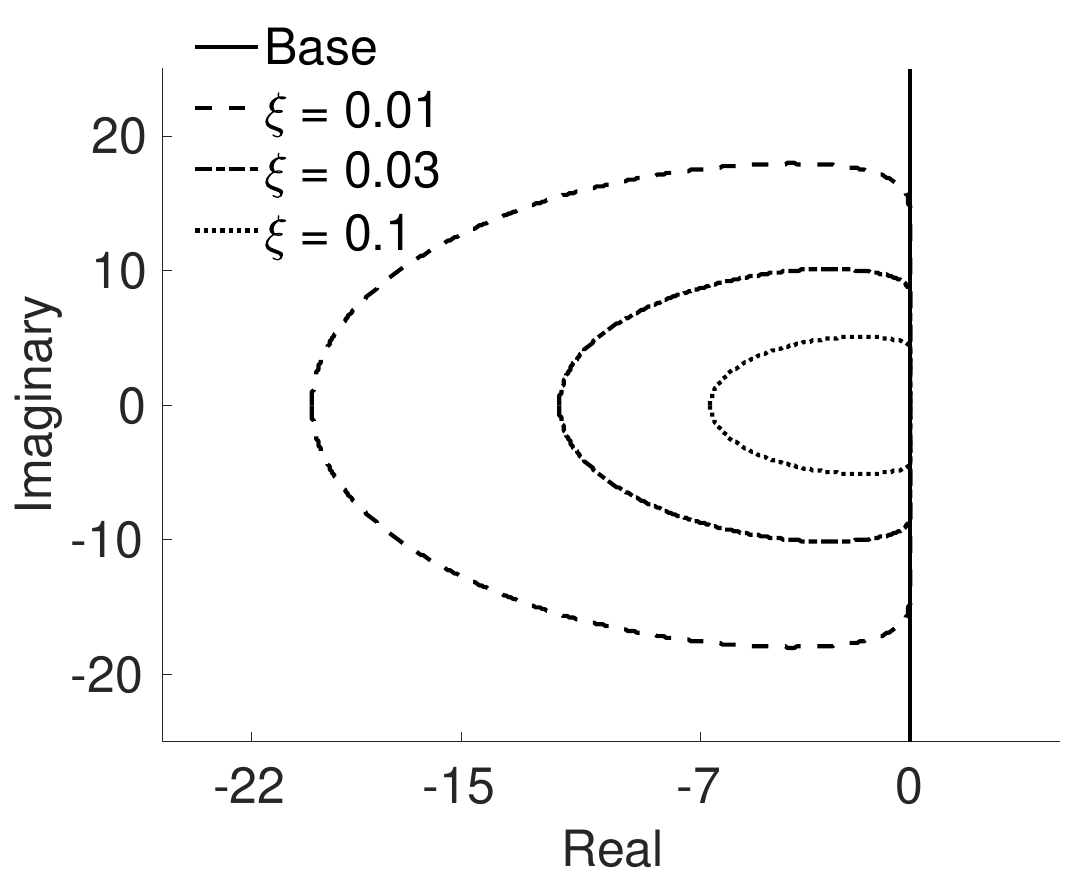}
\label{fig:stab2D-IRK21a-45}
}
\ifreport
\subfigure[IRK21a: $\mathcal{S}^{\textsc{2d}}_{\rho=10,\alpha=80^\circ}$]{
\includegraphics[width=0.3\linewidth]{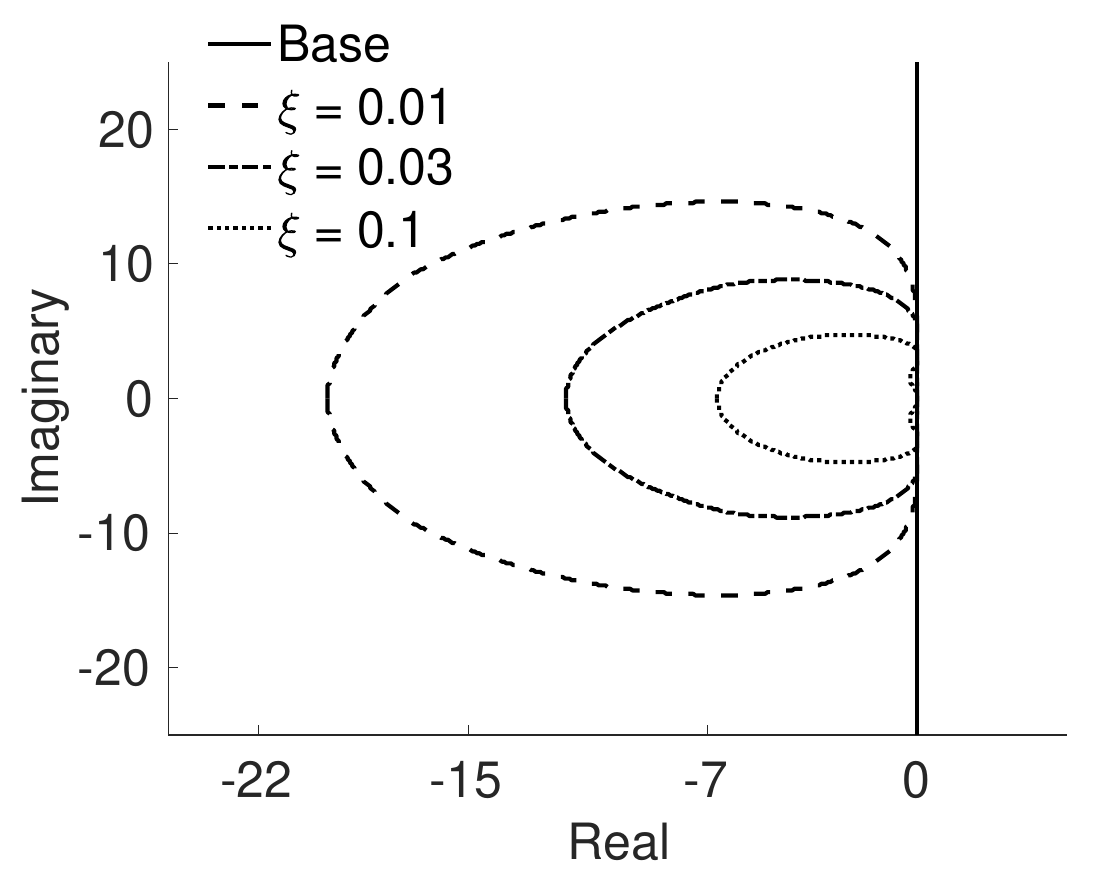}
\label{fig:stab2D-IRK21a-80}
}
\fi
\ifreport
\subfigure[ESDIRK34a: $\mathcal{S}^{\textsc{2d}}_{\rho=10,\alpha=10^\circ}$]{
\includegraphics[width=0.3\linewidth]{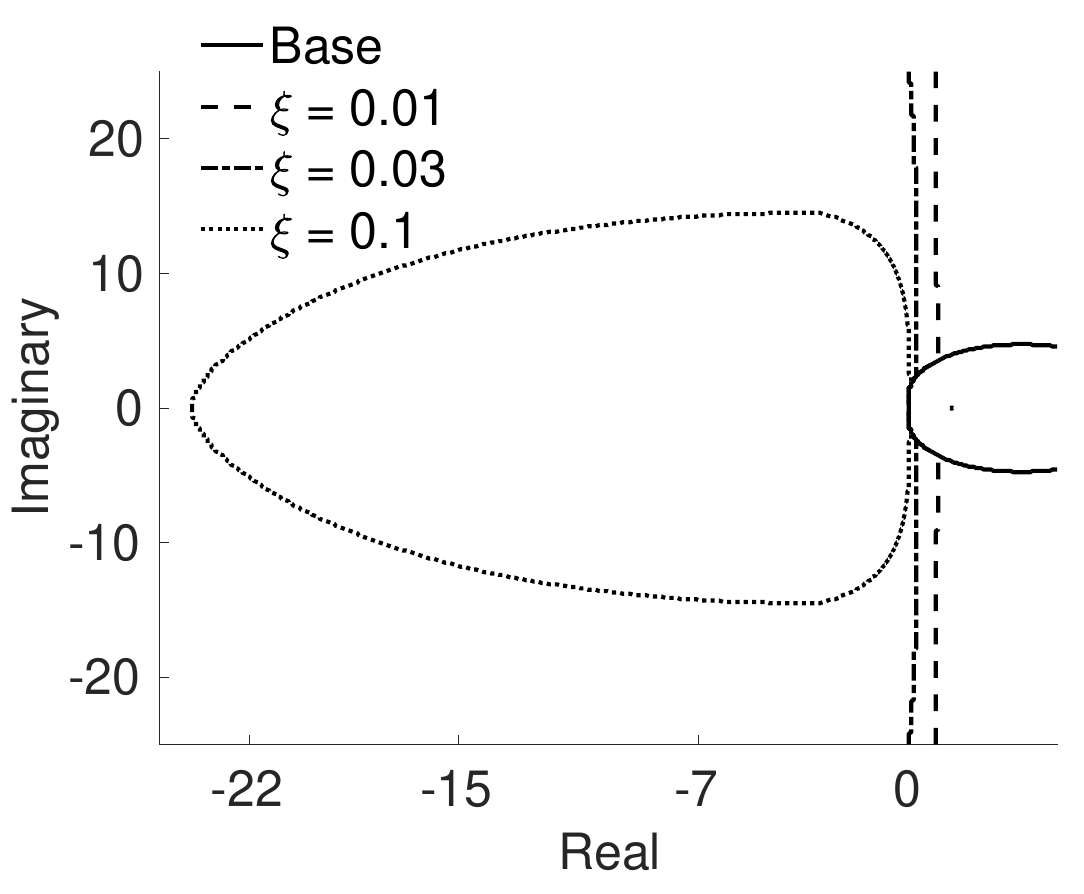}
\label{fig:stab2D-ESDIRK34a-10}
}
\fi
\subfigure[ESDIRK34a: $\mathcal{S}^{\textsc{2d}}_{\rho=10,\alpha=45^\circ}$]{
\includegraphics[width=0.3\linewidth]{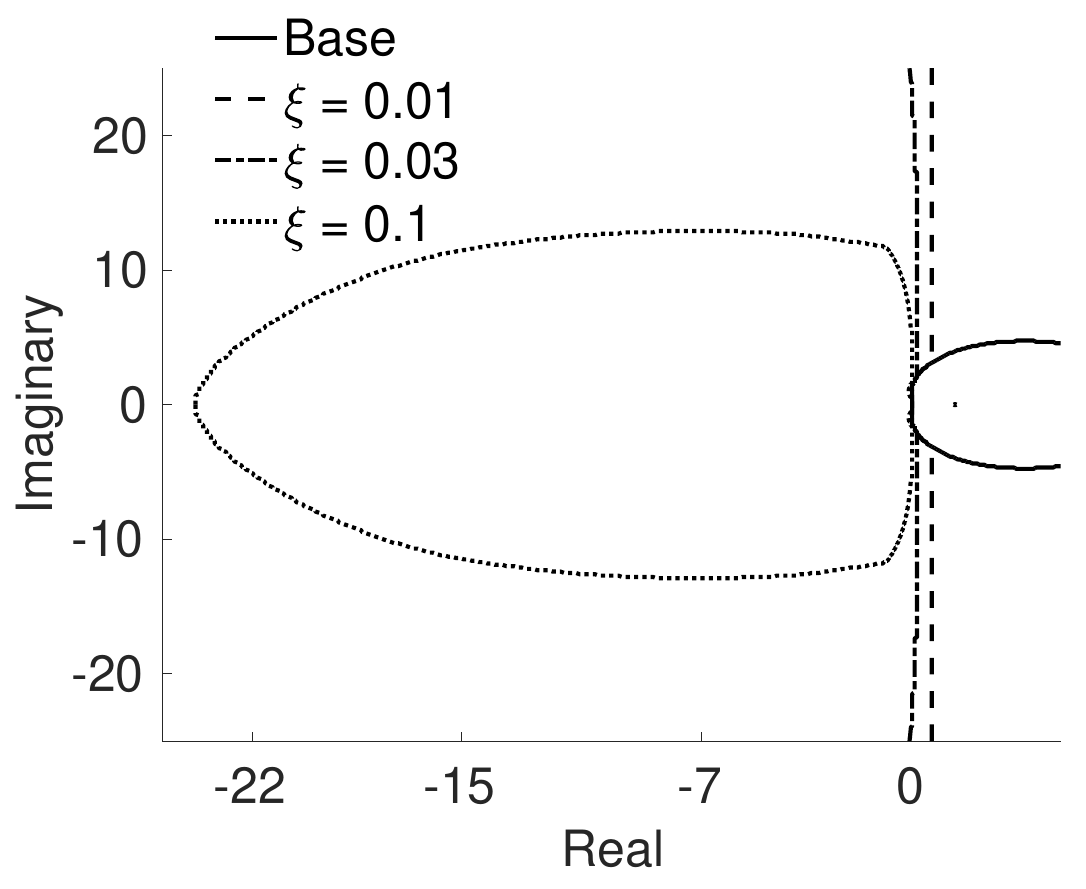}
\label{fig:stab2D-ESDIRK34a-45}
}
\ifreport
\subfigure[ESDIRK34a: $\mathcal{S}^{\textsc{2d}}_{\rho=10,\alpha=80^\circ}$]{
\includegraphics[width=0.3\linewidth]{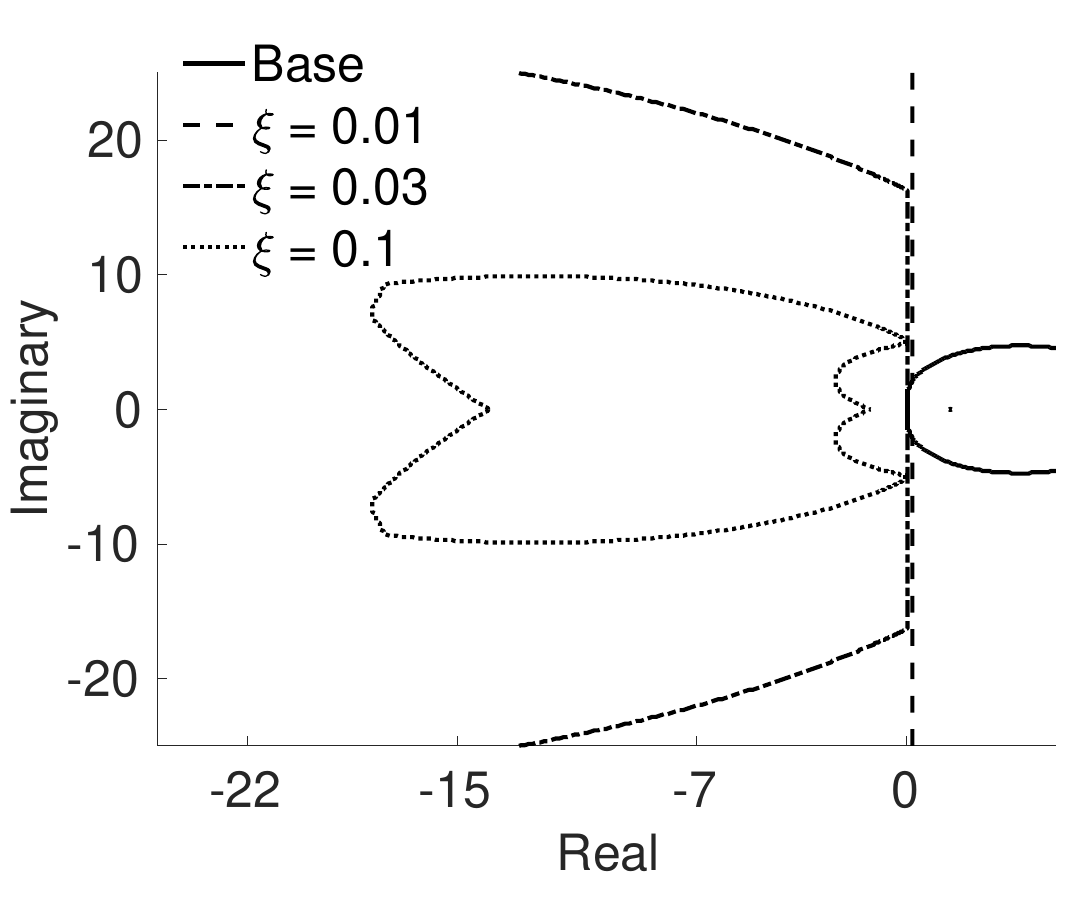}
\label{fig:stab2D-ESDIRK34a-80}
}
\fi
\ifreport
\subfigure[ESDIRK46a: $\mathcal{S}^{\textsc{2d}}_{\rho=10,\alpha=10^\circ}$]{
\includegraphics[width=0.3\linewidth]{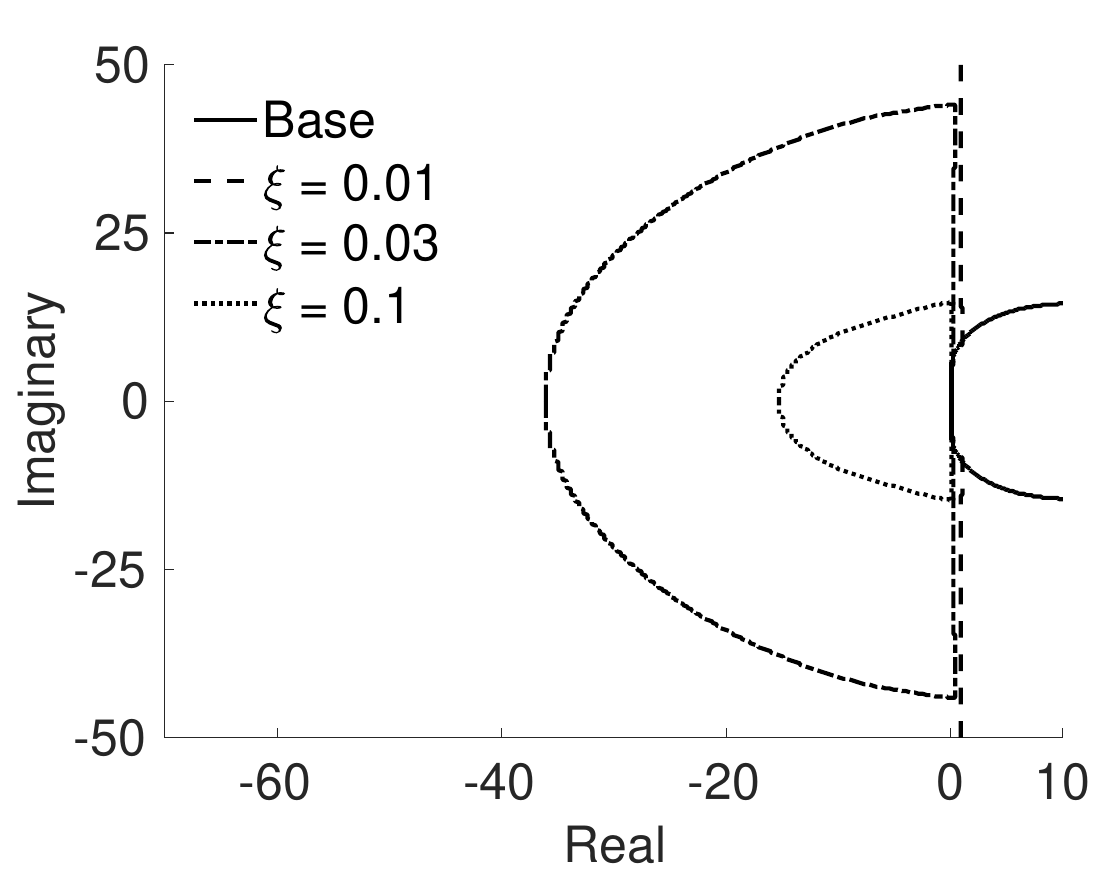}
\label{fig:stab2D-ESDIRK46a-10}
}
\fi
\subfigure[ESDIRK46a: $\mathcal{S}^{\textsc{2d}}_{\rho=10,\alpha=45^\circ}$]{
\includegraphics[width=0.3\linewidth]{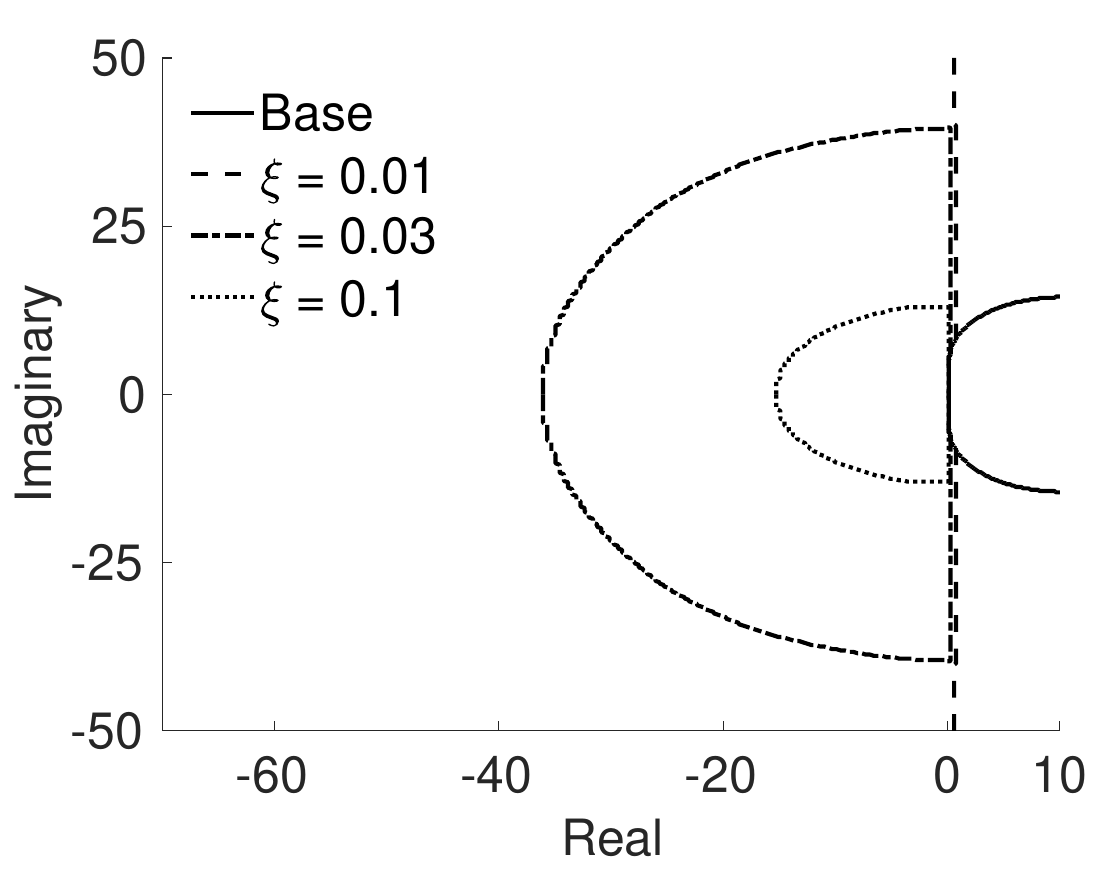}
\label{fig:stab2D-EESDIRK46a-45}
}
\ifreport
\subfigure[ESDIRK46a: $\mathcal{S}^{\textsc{2d}}_{\rho=10,\alpha=80^\circ}$]{
\includegraphics[width=0.3\linewidth]{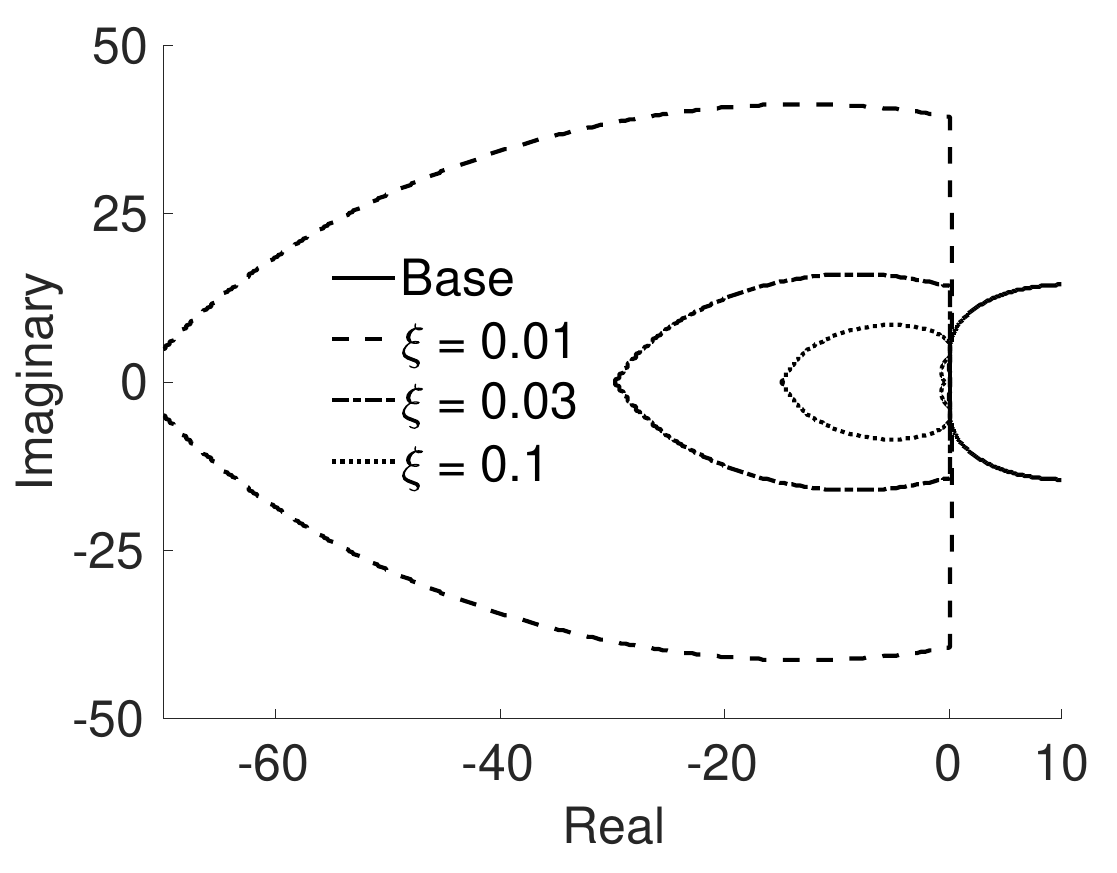}
\label{fig:stab2D-ESDIRK46a-80}
}
\fi
\caption{Matrix slow stability regions \eqref{eqn:matrix-slow-stability-region} of implicit MRI-GARK schemes. The $\xi=0$ case corresponds to the base slow method. The stability is conditional with respect to the fast variable, and degrades with increasing $\alpha$. The stability also degrades with an increasing influence $\xi$ of the fast system on the slow one.} 
\label{fig:MRI-implicit-stability_2D}
\end{figure}



\section{Numerical Results}
\label{sec:numerics}

\subsection{Additive partitioning: the Gray-Scott model}

We consider first the application of MRI-GARK methods to the  Gray-Scott reaction-diffusion PDE \cite{lee1993pattern}:
\begin{equation}
\label{eq:Gray-Scott}
\underbrace{
\begin{bmatrix} u \\ v \end{bmatrix}'
}_{y'}
=
\underbrace{
\begin{bmatrix}  \nabla \cdot ( \varepsilon_u\, \nabla u ) \\  \nabla \cdot ( \varepsilon_v\, \nabla v )  \end{bmatrix}
}_{\funs (y)}
+
\underbrace{
\begin{bmatrix}  -u\,v^2 + \mathfrak{f} (1-u) \\  u\,v^2 - (\mathfrak{f} + \mathfrak{k}) \end{bmatrix}
}_{\funf (y)}
\end{equation}
The spatial domain is the unit square discretized with second order finite differences. The model parameters are $\varepsilon_u = 0.0625$, $\varepsilon_v = 0.0312$,  $\mathfrak{k}= 0.0520$, and $\mathfrak{f}= 0.0180$. 
The system \eqref{eq:Gray-Scott} is written in the form \eqref{eqn:multirate-additive-ode} by additively splitting the right hand side into slow linear diffusion terms and fast nonlinear reaction terms. 
%
%

All numerical experiments use a simulation time interval of [0,30] time units. The fast stage integration \eqref{eqn:MIS-additive-internal-ode} is carried out using Matlab's ode45 function with tight tolerances abstol=reltol=1.e-10.
Convergence diagrams for the third and fourth order methods developed in Section \ref{sec:explicit-methods} are shown in Figure \ref{fig:MRI-GrayScott_convergence}. All methods achieve their theoretical orders of accuracy, confirming the order conditions theory developed herein.

\begin{figure}[h]
\centering
\ifreport
\subfigure[Explicit methods of order three]{
\includegraphics[width=0.375\linewidth]{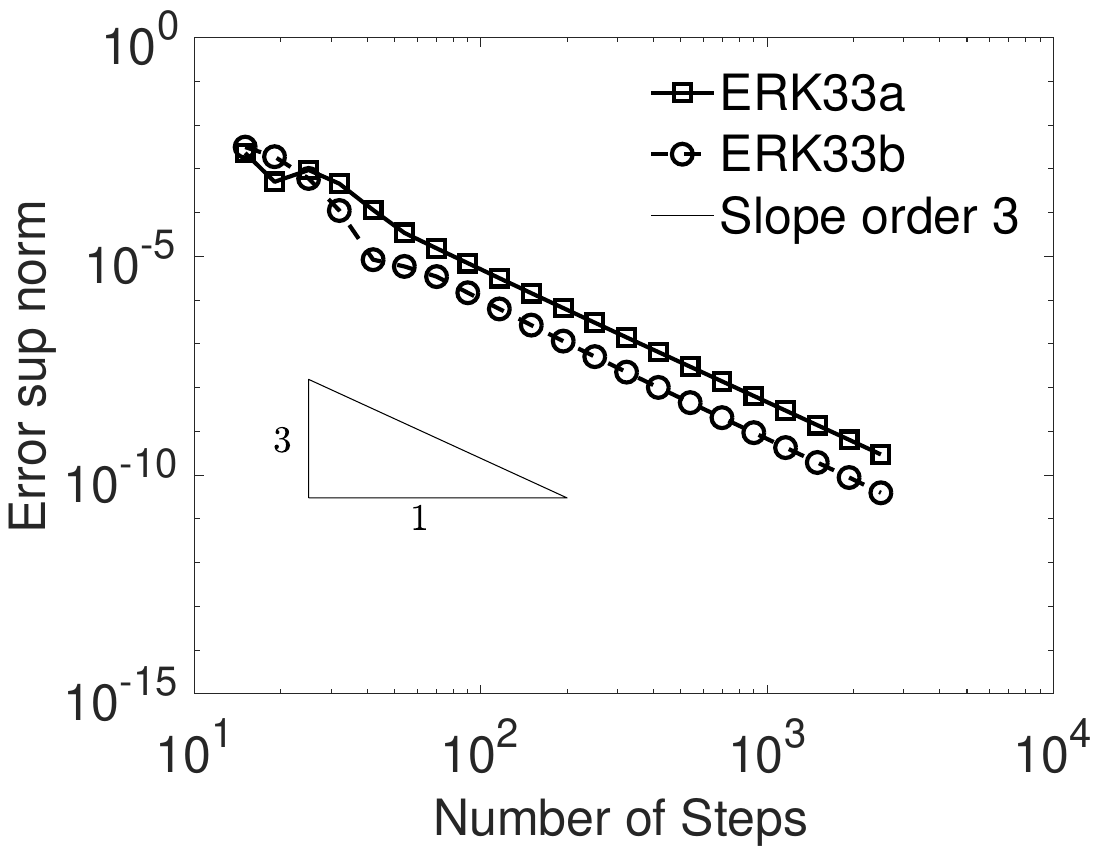}
\label{fig:GrayScott_ERK3}
}
\subfigure[Explicit methods of order four]{
\includegraphics[width=0.375\linewidth]{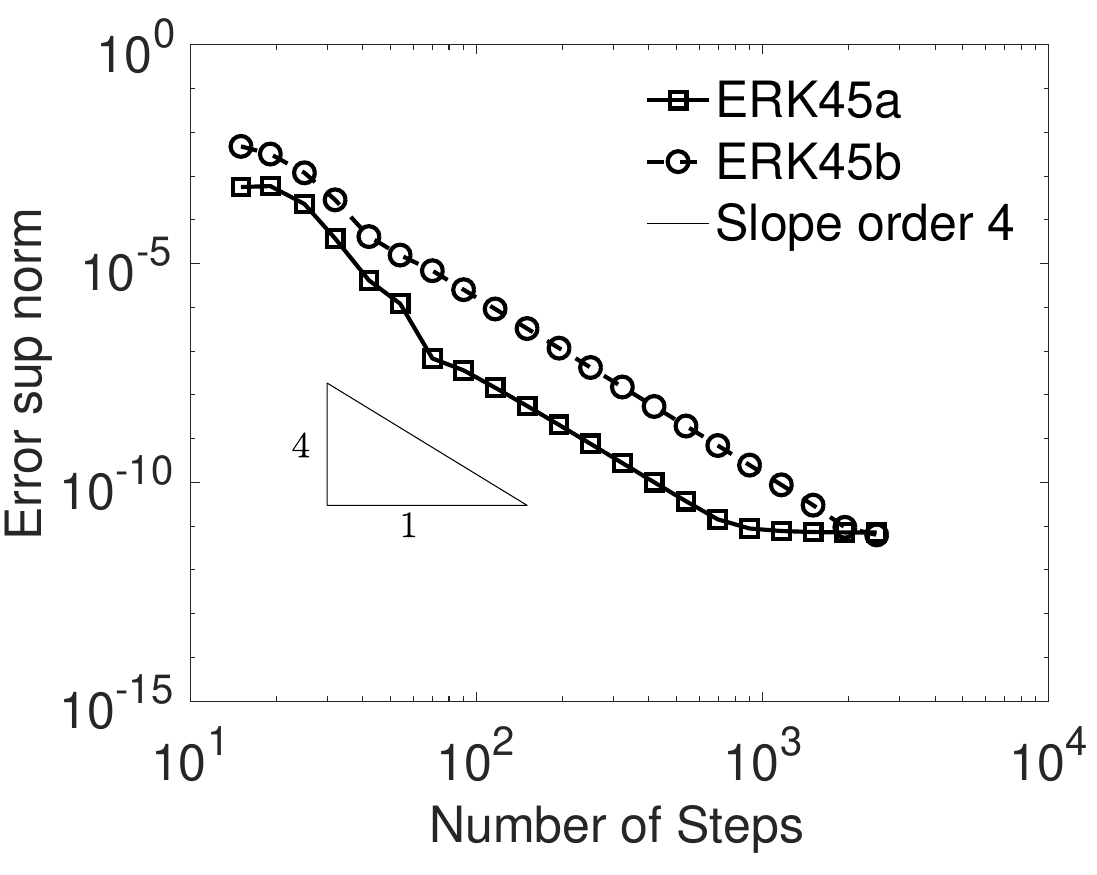}
\label{fig:GrayScott_ERK4}
}
\subfigure[Implicit methods of order three]{
\includegraphics[width=0.375\linewidth]{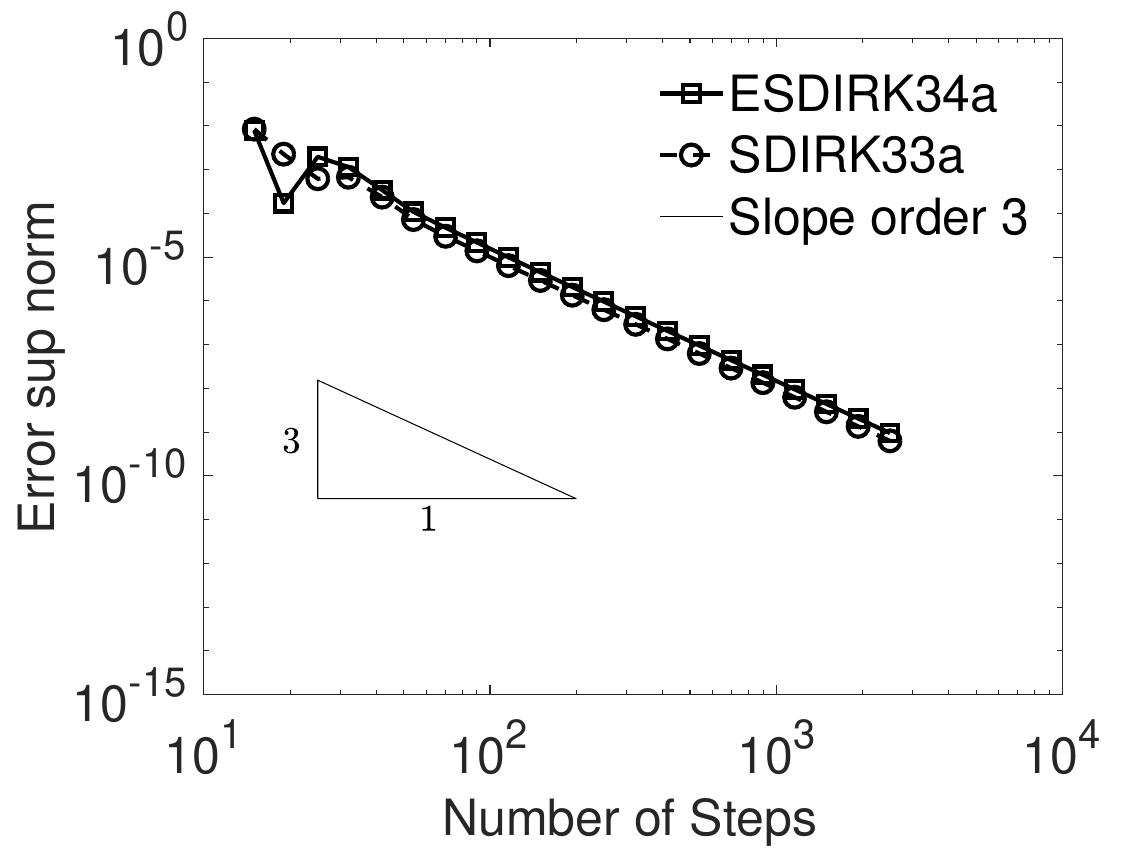}
\label{fig:GrayScott_IRK3}
}
\subfigure[Implicit method of order four]{
\includegraphics[width=0.375\linewidth]{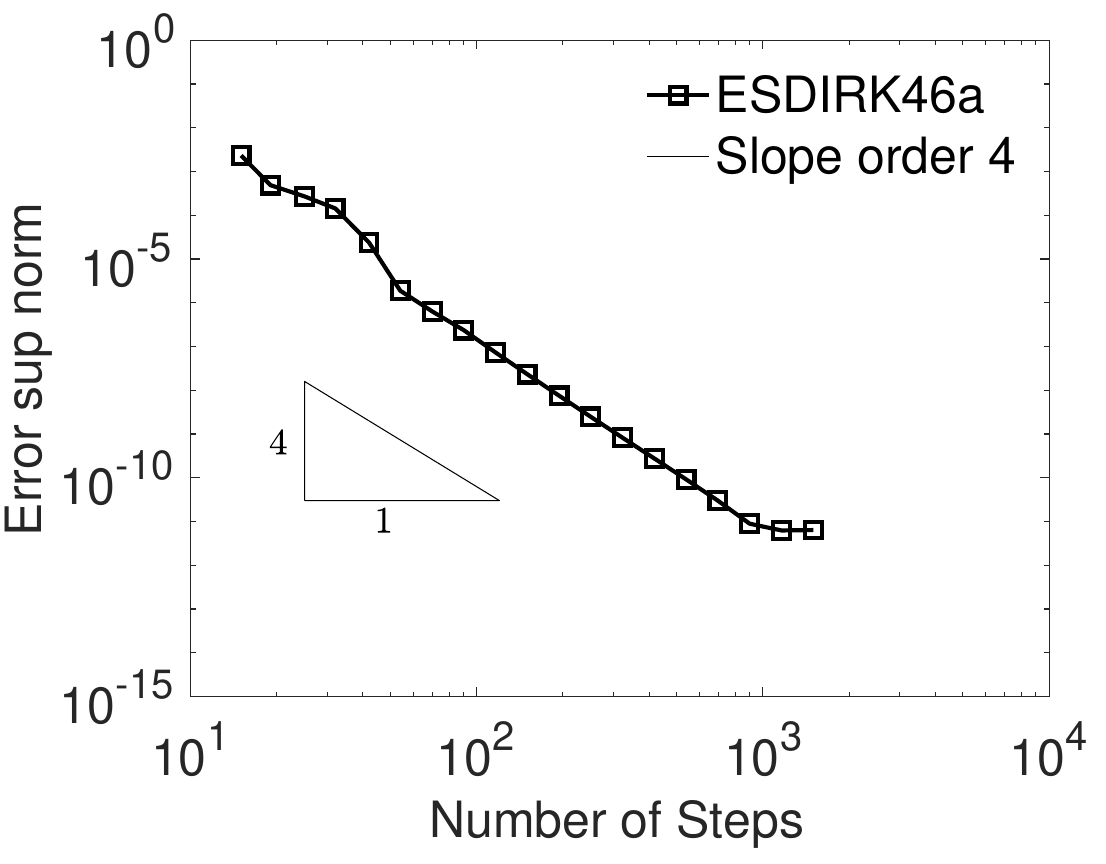}
\label{fig:GrayScott_IRK4}
}
\else
\subfigure[Methods of order three]{
\includegraphics[width=0.4\linewidth]{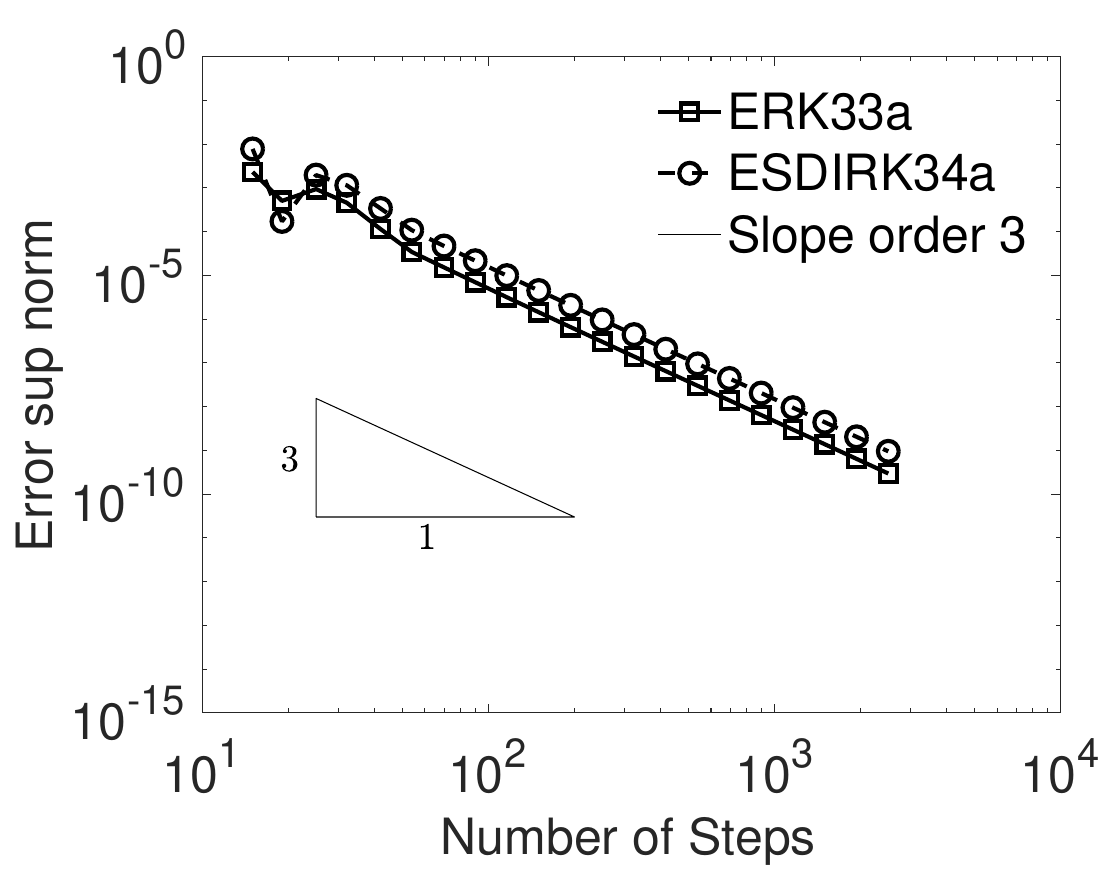}
\label{fig:GrayScott_order3}
}
\subfigure[Methods of order four]{
\includegraphics[width=0.4\linewidth]{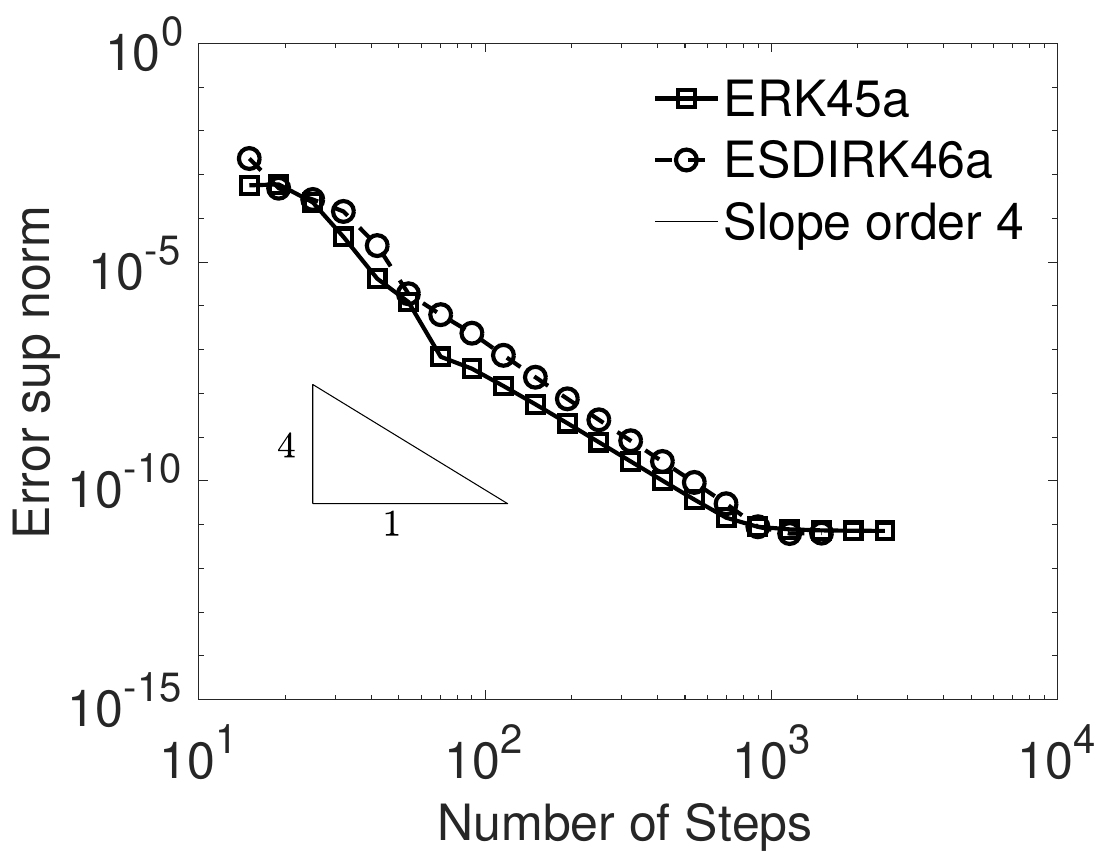}
\label{fig:GrayScott_order4}
}
\fi
\caption{Converge diagrams for MRI-GARK methods applied to the Gray-Scott model \eqref{eq:Gray-Scott}.} 
\label{fig:MRI-GrayScott_convergence}
\end{figure}

\subsection{Component partitioning: the KPR problem}
%
We next consider  the KPR problem used in \cite{Sandu_2013_MR-extrapolation}, which is a component partitioned system of the form \eqref{eqn:multirate-component-ode}. The nonlinear KPR problem is an adaptation to vector form of the scalar Prothero-Robinson \cite{Bartel_2002_MR-W,Hairer_book_II,Prothero_1974_PR} problem and is given by the following equations:
\begin{subequations}
\label{eq:PR:Nonlinear}
\begin{equation}
\label{eq:PR:Nonlinear:Equation}
\renewcommand{\arraystretch}{\astretch}
\begin{bmatrix}
y^{\{\f\}}\\
y^{\{\s\}}
\end{bmatrix}'
=
\boldsymbol{\Omega}
\cdot
\begin{bmatrix}
\frac{-3+y^{\{\f\}}\,^2 - \cos(\omega t)}{2\,y^{\{\f\}}} \\
\frac{-2+y^{\{\s\}}\,^2-\cos(t)}{2\,y^{\{\s\}}}
\end{bmatrix}
- 
\begin{bmatrix}
\frac{\omega\, \sin(\omega t)}{2\,y^{\{\f\}}} \\
\frac{\sin(t)}{2\,y^{\{\s\}}}
\end{bmatrix},
\end{equation}
where $\boldsymbol{\Omega}$ is defined in \eqref{eqn:ode2d-linear}. For the simulation we use the values $\lambdaf = -10$, $\lambdas = -1$, $\zeta \in \{0.1,0.5\}$, $\alpha \in \{1,5\}$, $\omega = 20$.
The exact solution of \eqref{eq:PR:Nonlinear:Equation} is given by:
\begin{equation}
\label{eq:PR:Nonlinear:Exact:Solution}
y^{\{\f\}}(t) = \sqrt{3+\cos(\omega t)}, \qquad
y^{\{\s\}}(t) = \sqrt{2+\cos(t)},
\end{equation}
\end{subequations}
and the initial conditions are the exact solution evaluated at the initial time. The simulation time interval is $[0,5\pi/2]$ (units).

We note that the difference between the fast and slow scales in \eqref{eq:PR:Nonlinear:Equation} is mainly driven by the ratio $\omega$ of the frequencies of the forcing terms, not by the ratio of dynamical terms $\lambdaf/\lambdas$.

The fast stage integration \eqref{eqn:MIS-component-stages} is carried out using Matlab's ode45 function with tight tolerances abstol=reltol=1.e-10. The convergence diagrams reported in Figure \ref{fig:MRI-KPR_convergence} indicate that the methods perform at their theoretical orders for this system. 

\begin{figure}[h]
\centering
\ifreport
\subfigure[Explicit methods of order three]{
\includegraphics[width=0.375\linewidth]{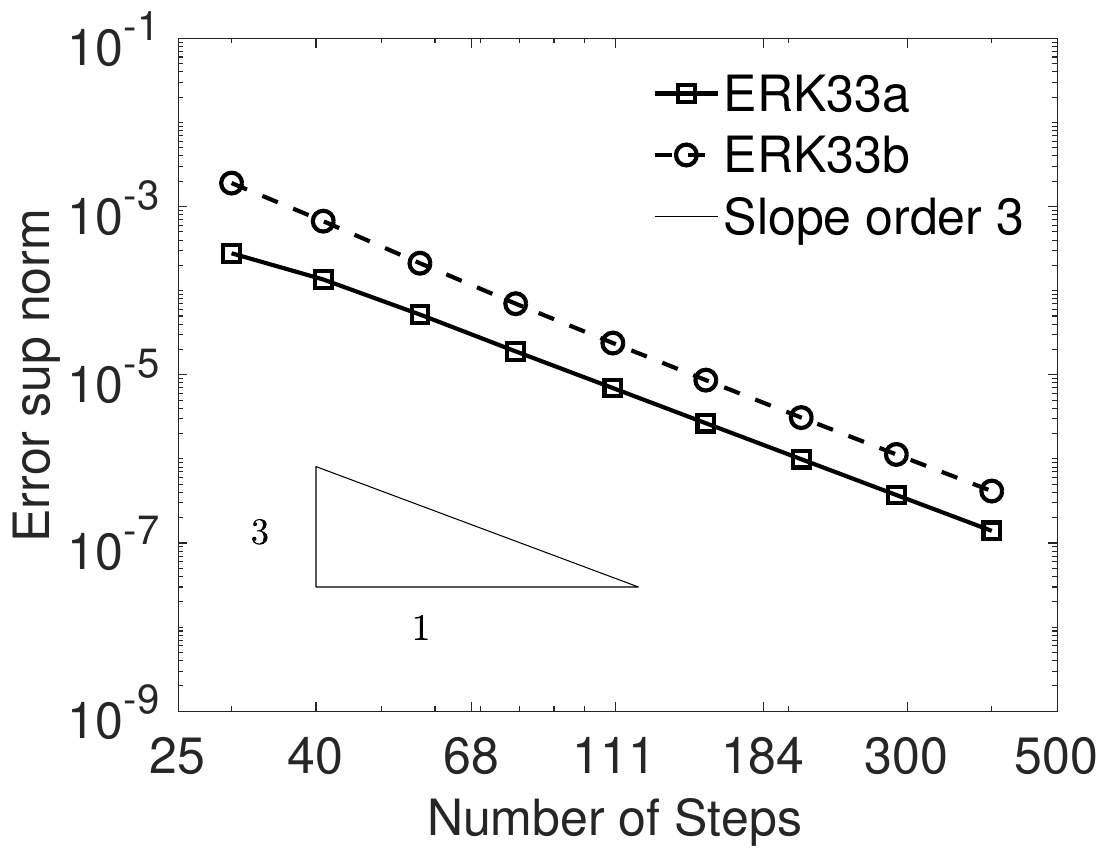}
\label{fig:KPR_ERK3}
}
\subfigure[Explicit methods of order four]{
\includegraphics[width=0.375\linewidth]{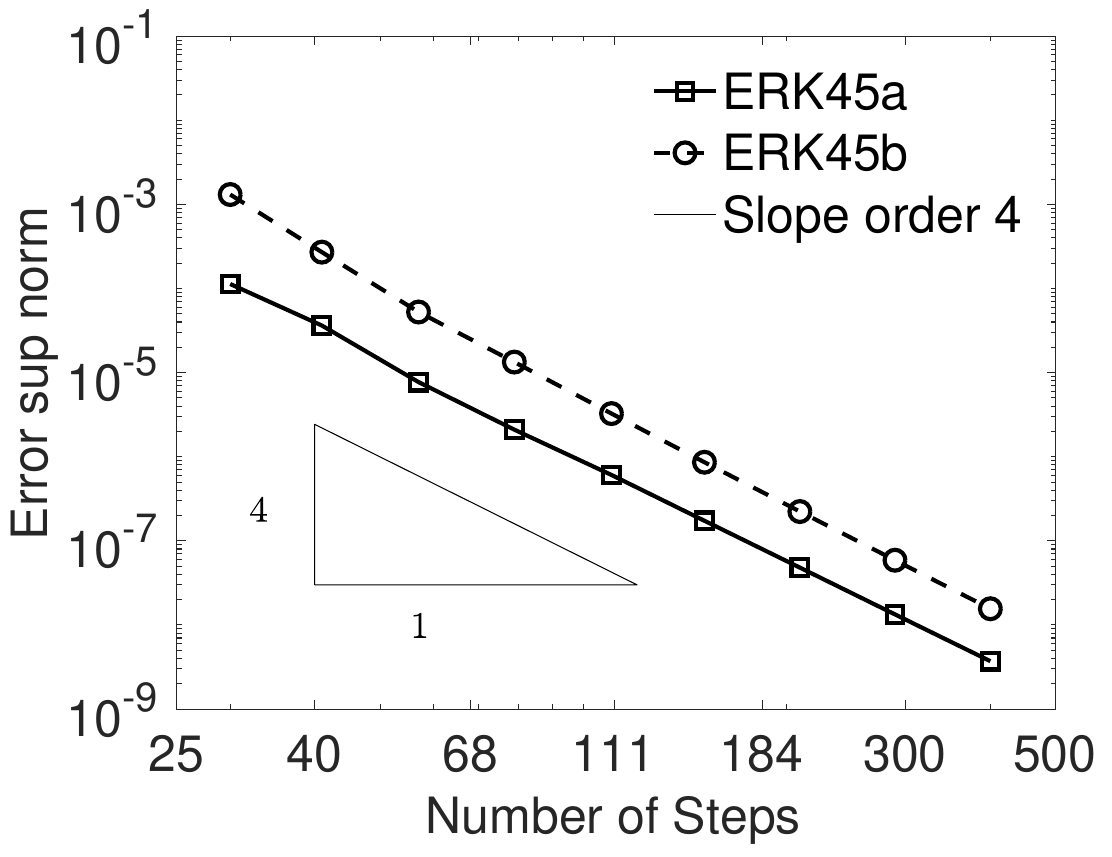}
\label{fig:KPR_ERK4}
}
\subfigure[Implicit methods of order three]{
\includegraphics[width=0.375\linewidth]{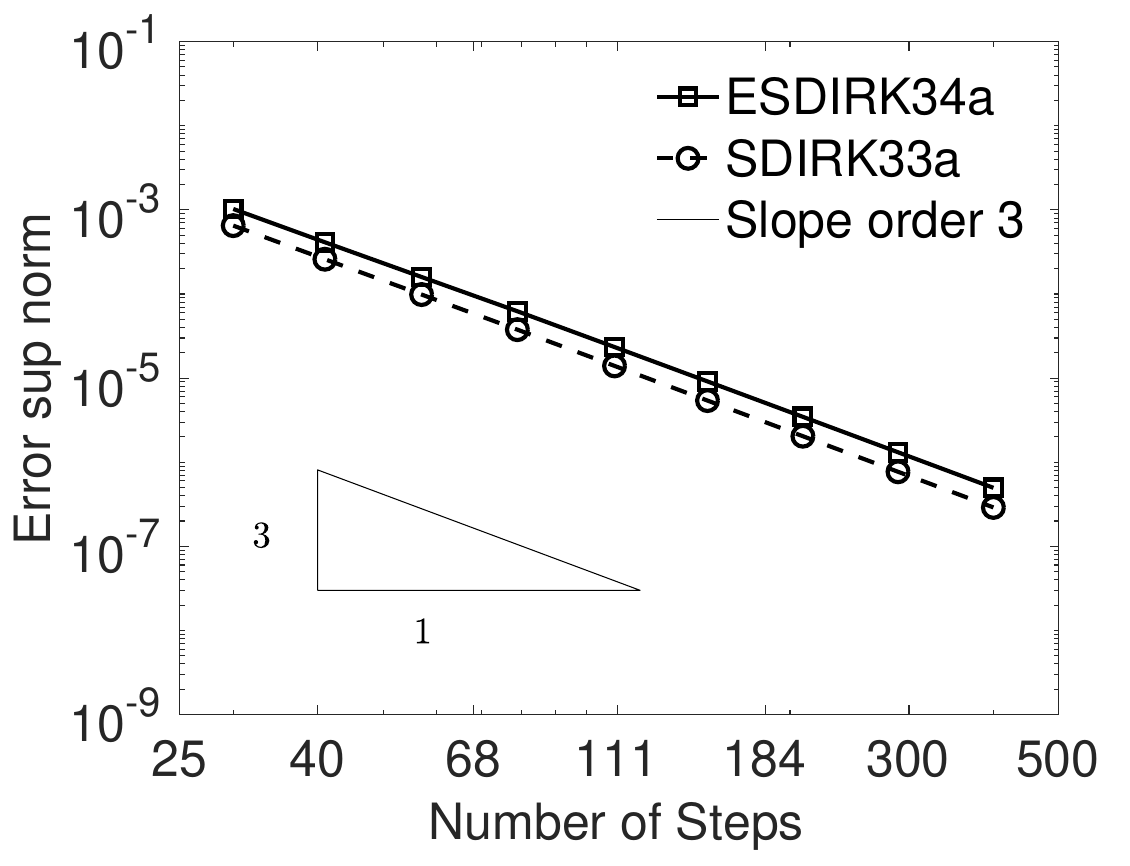}
\label{fig:KPR_IRK3}
}
\subfigure[Implicit method of order four]{
\includegraphics[width=0.375\linewidth]{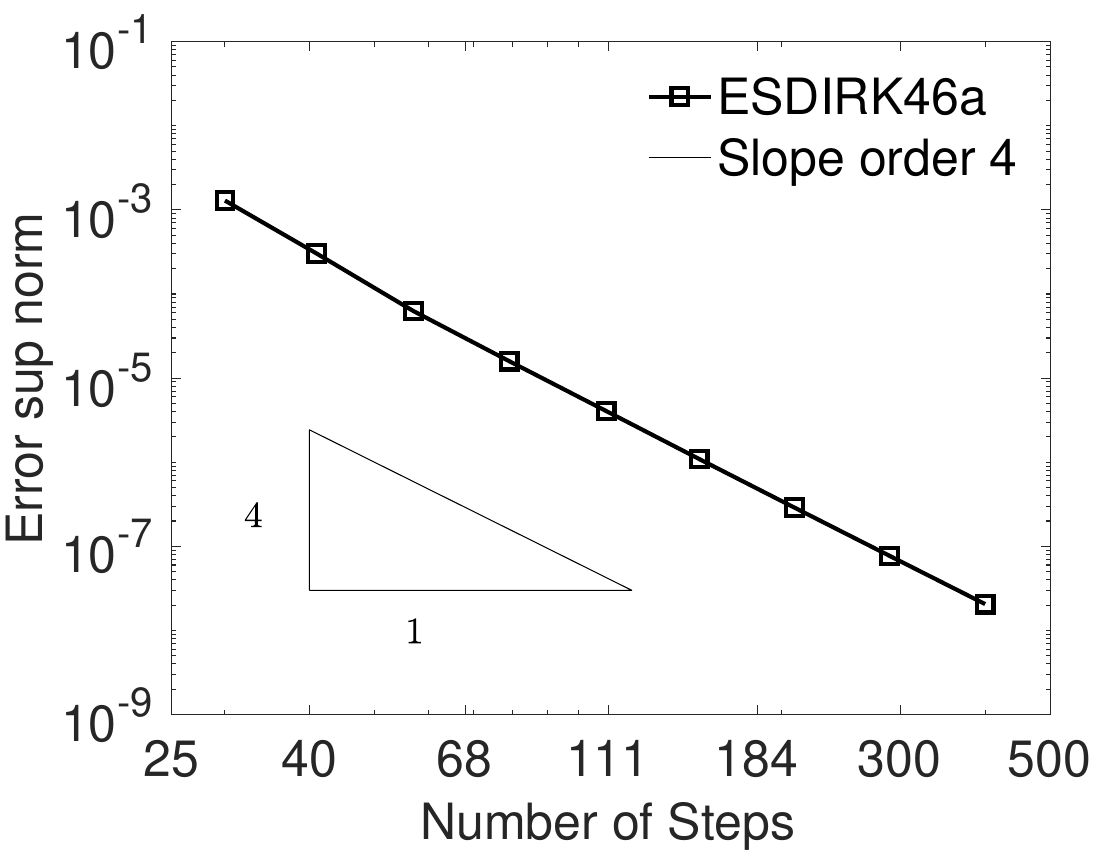}
\label{fig:KPR_IRK4}
}
\else
\subfigure[Methods of order three]{
\includegraphics[width=0.4\linewidth]{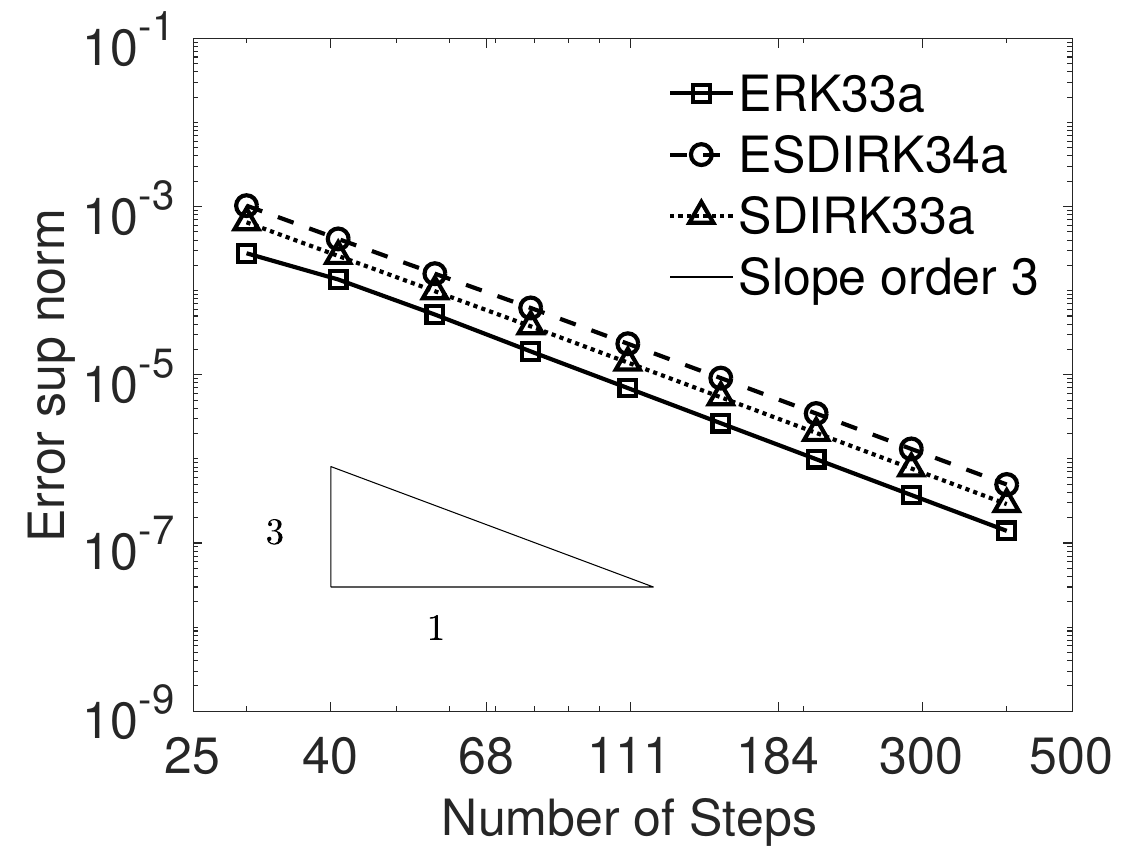}
\label{fig:KPR_order3}
}
\subfigure[Methods of order four]{
\includegraphics[width=0.4\linewidth]{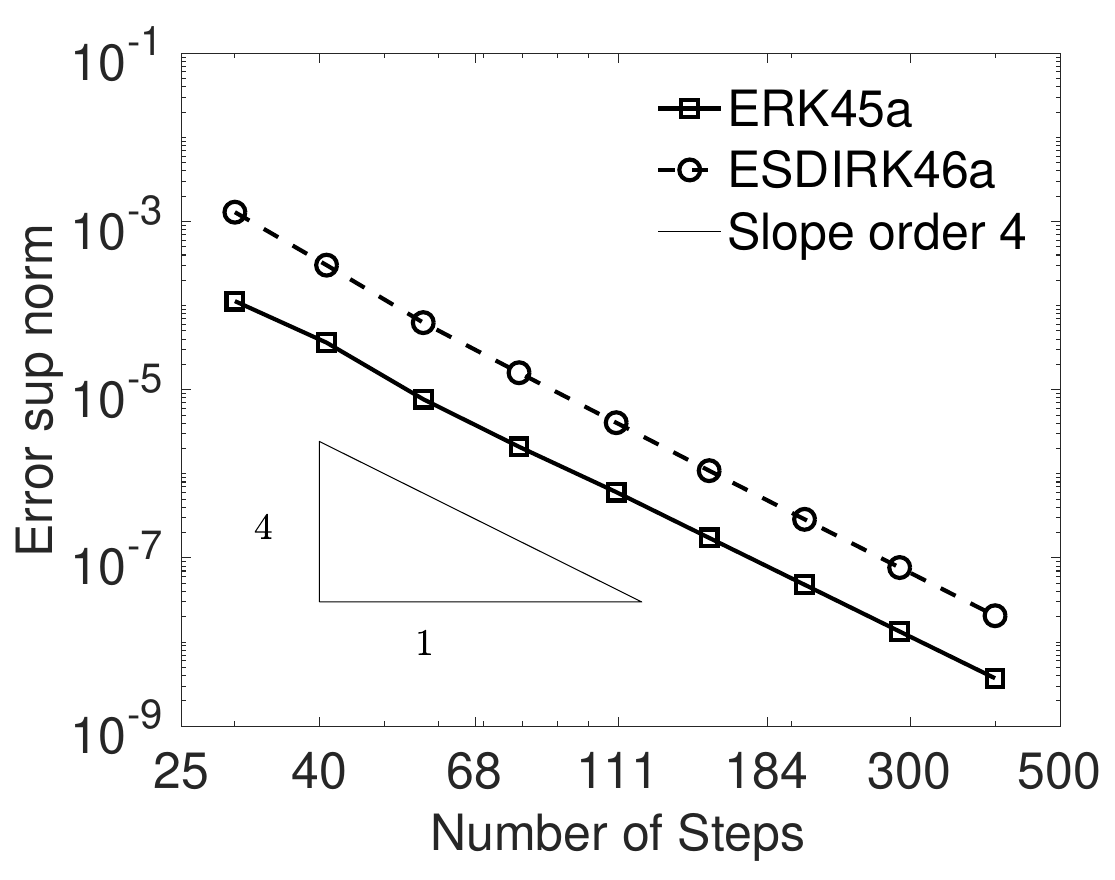}
\label{fig:KPR_order4}
}
\fi
\caption{Converge diagrams for MRI-GARK methods applied to the KPR model \eqref{eq:Gray-Scott}.} 
\label{fig:MRI-KPR_convergence}
\end{figure}


\section{Conclusions and Future Work}
\label{sec:conclusions}

We construct a class of multirate infinitesimal GARK schemes where the slow system is discretized with a Runge-Kutta base method, and a modified fast ordinary differential equations is solved to propagate information between consecutive stages of the slow method. The methods are fastest-first type, and are decoupled, in the sense that any implicit calculations are done separately for the slow or the fast components.  

This work extends the classical MIS approach \cite{Knoth_1998_MR-IMEX,Wensch_2009_MIS} in multiple ways. Time dependent coupling coefficients $\gamma(t)$ are used to incorporate  information about the slow dynamics into the modified fast system. The MRI-GARK general order conditions theory is developed by leveraging the GARK accuracy theory \cite{Sandu_2015_GARK}. This allows the construction of the first fourth order multirate infinitesimal methods. Moreover, the new framework enables the construction of the first  multirate infinitesimal schemes that are implicit in the slow component. Scalar and matrix stability analyses -- using a new simplified test problem -- are carried out. They reveal that the stability of the overall MRI-GARK process degrades for fast subsystems with low damping (the eigenvalues of the fast subsystem Jacobian are close to the imaginary axis.) 

The multirate infinitesimal framework can lead to new improved multirate discretizations for many applications, as this approach offers extreme flexibility in the choice of the numerical solution process for the fast component. First, the fast numerical discretization scheme can be explicit or implicit, and of any type that favors the application (e.g., Runge-Kutta, linear multistep, general linear, or exponential methods). In contrast, traditional fully discrete multirate approaches need to fix the fast discretization to a specific class of schemes before deriving order conditions. Second,  appropriate sequences of fast time steps, including adaptive steps for error control, can be employed in each fast subinterval. The coupling coefficients $\gamma(\tau)$ can be easily evaluated at each intermediate time point, and they remain bounded throughout.
We expect the new MRI-GARK family to be most useful for systems with widely disparate time scales, and where the fast process is dispersive 
and has a weak influence on the slow dynamics. We expect the high temporal orders of accuracy possible with MRI-GARK schemes to aid numerical simulations of time dependent partial differential equations where high order space discretizations are already used  \cite{Dawson_2014_MRDG,Goedel_2009_MRDG,Karakus_2016_MRDG,Remacle_2012_MRDG,Seny_2014_MRDG}.

Coupled implicit MRI-GARK schemes, where implicit stages are computed involving both the slow or the fast components, promise to improve the overall stability of the multirate infinitesimal schemes for oscillatory fast subsystems with low damping. The development of coupled  implicit MRI-GARK schemes will be reported elsewhere.


%

\end{document}